\newtheorem{theorem}{Theorem}[section]
\newtheorem{corollary}{Corollary}[theorem]
\newtheorem{proposition}[theorem]{Proposition}
\newtheorem{remark}[theorem]{Remark}
\numberwithin{equation}{section}
\begin{document}
\title{Noise and dissipation on coadjoint orbits}
\author[A. Arnaudon]{Alexis Arnaudon}
\author[A. L. De Castro]{Alex L. De Castro}
\author[D. D. Holm]{Darryl D. Holm}

\address{AA, DH: Department of Mathematics, Imperial College, London SW7 2AZ, UK}
\address{AC: Department of Mathematics, Imperial College, London SW7 2AZ, UK and Departamento de Matem\'atica PUC-Rio, Rio de Janeiro 22451-900.}

\maketitle

\begin{abstract}
	We derive and study stochastic dissipative dynamics on coadjoint orbits by incorporating noise and dissipation into mechanical systems arising from the theory of reduction by symmetry, including a semidirect-product extension.
	Random attractors are found for this general class of systems when the Lie algebra is semi-simple, provided the top Lyapunov exponent is positive.  
	We study two canonical examples, the free rigid body and the heavy top, whose stochastic integrable reductions are found and numerical simulations of their random attractors are shown. 
\end{abstract}

\setcounter{tocdepth}{1}
\tableofcontents

\section{Introduction}

Geometric mechanics, introduced in Poincar\'e \cite{poincare1901forme}, is a powerful formalism for understanding dynamical systems whose Lagrangian and Hamiltonian are invariant under the transformations of the configuration manifold $M$ by a Lie group $G$. Examples of its applications range from the simple finite dimensional dynamics of the freely rotating rigid body, to the infinite dimensional dynamics of the ideal fluid equations. For a historical review and basic references, see, e.g., \cite{holm1998euler}. See \cite{marsden1999intro,holm2008geometric,holm2009geometry} for textbook introductions to geometric mechanics and background references.
One of the main approaches of geometric mechanics is the method of reduction of the motion equations of a mechanical system by a Lie group symmetry $G$ in either its Lagrangian formulation on the tangent space $TM$, or its Hamiltonian formulation on the cotangent space $T^*M$. {This method yields symmetry-reduced Lagrangian and Hamiltonian formulations of the Euler-Poincar\'e equations governing the dynamics of the \emph{momentum map} $J: T^*M\to\mathfrak{g}^*$, where $\mathfrak{g}^*$ is the dual Lie algebra of the Lie symmetry group $G$.} 

In general terms, Lie group reduction by symmetry simplifies the motion equations of a mechanical system with symmetry by transforming them into new dynamical variables in $\mathfrak{g}^*$ which are invariant under the same Lie group symmetries as the Lagrangian and Hamiltonian for the dynamics of the mechanical system.  More specifically, on the Lagrangian side, the new invariant variables under the Lie symmetries are obtained from Noether's theorem, via the tangent lift of the infinitesimal action of the Lie symmetry group on the configuration manifold. The unreduced Euler--Lagrange equations are replaced by equivalent Euler-Poincar\'e equations expressed in the new invariant variables in $\mathfrak{g}^*$, plus an auxiliary reconstruction equation, which restores the information in the tangent space of the configuration space lost in transforming to group invariant dynamical variables. On the Hamiltonian side, after a Legendre transformation, equivalent new invariant variables in $\mathfrak{g}^*$ are defined by the momentum map $J:T^*M\to \mathfrak{g}^*$ from the phase space $T^*M$ of the original system on the configuration manifold $M$ to the dual $\mathfrak{g}^*$ of the Lie symmetry algebra $\mathfrak{g}\simeq T_eG$, via the cotangent lift of the infinitesimal action of the Lie symmetry group on the configuration manifold. The cotangent lift momentum map is an equivariant Poisson map which reformulates the canonical Hamiltonian flow equations in phase space as noncanonical Lie-Poisson equations governing flow of the momentum map on an orbit of the coadjoint action of the Lie symmetry group on the dual of its Lie algebra $\mathfrak{g}^*$, plus an auxiliary reconstruction equation for lifting the Lie group reduced coadjoint motion back to phase space $T^*M$. 

Thus, Lie symmetry reduction yields coadjoint motion of the corresponding momentum map. The dimension of the dynamical system reduces because its solutions are restricted to remain on certain subspaces of the original phase space, called coadjoint orbits. These are orbits of the action of the group $G$ on $\mathfrak{g}^*$, the dual space of its Lie algebra $\mathfrak{g}\simeq T_eG$. Coadjoint orbits lie on level sets of the distinguished smooth functions $C\in \mathcal{F}:\mathfrak{g}^*\to\mathbb{R}$ of the symmetry-reduced dual Lie algebra variables $\mu\in\mathfrak{g}^*$ called Casimir functions. Thus, the Casimir functions are conserved quantities. Indeed, Casimir functions have null Lie-Poisson brackets $\{C,F\}(\mu)=0$ with any other functions $F\in \mathcal{F}(\mathfrak{g}^*)$, including the reduced Hamiltonian $h(\mu)$. Furthermore, level sets of the Casimirs, on which the coadjoint orbits lie, are symplectic manifolds which provide the framework on which geometric mechanics is constructed. These symplectic manifolds have many applications in physics, as well as in symplectic geometry, whenever Lie symmetries are present. In particular, coadjoint motion of the momentum map $J(t)={\rm Ad}^*_{g(t)}J(0)$ for a solution curve $g(t)\in C(G)$ takes place on the intersections of level sets of the Casimirs with level sets of the Hamiltonian. 

Given this framework for Lie group reduction by symmetry in deterministic geometric mechanics, we seek strategies for adding stochasticity and dissipation in classical mechanical systems with symmetry which preserve the coadjoint motion structure of the unperturbed deterministic dynamics. Specifically, we seek stochastic coadjoint motion equations whose solutions dissipate energy, while they also lie on the coadjoint orbits of the unperturbed equation. 
Consequently, our first goal in this paper will be to replace the deterministic equations for coadjoint motion by stochastic processes whose solutions lie on coadjoint orbits. 
However, simply inserting additive noise into the deterministic equations will not, in general, produce coadjoint motion on level sets of the Casimirs of a Lie-Poisson bracket. 
Instead, our approach in developing a systematic derivation of {stochastic deformations that preserve coadjoint orbits} will be to constrain the variations in Hamilton's principle to preserve the transport relations for infinitesimal transformations defined by the action of a stochastic vector field on the configuration manifold. 

Having used the constrained Hamilton's principle to derive the stochastic coadjoint motion equation, the study of the associated Fokker-Planck equation and its invariant measure will follow naturally, and be well defined, at least provided one restricts to finite dimensional mechanical systems. 
The resulting Fokker-Planck equation defines a probability density for coadjoint motion on Casimir surfaces, since it takes the form of a Lie-Poisson equation for the transport part, and a double Lie-Poisson structure for the diffusion part, both of which generate motion along coadjoint orbits.  
As we will discover, this form of the Fokker-Planck equation in the absence of any additional energy dissipation will imply that the invariant measure (asymptotically in time) simply tends to a constant on Casimir surfaces. 

Next, we shall include an additional energy dissipation mechanism, called double bracket dissipation, or selective decay, which preserves the coadjoint orbits while it decays the energy toward its minimum value, usually associated with an equilibrium state of the deterministic system.
We refer to \cite{bloch1996euler,gaybalmaz2013selective, gaybalmaz2014geometric} and references therein for complete studies of double bracket dissipations. 
In a second step, we will include this double bracket dissipative term in our stochastic coadjoint motion equations and again study the associated Fokker-Planck equation and its invariant measure, which will no longer be a constant. Instead, the invariant measure will be an exponential function of the energy (Gibbs measure).  

The procedure we shall follow will produce stochastic dissipative dynamical systems on coadjoint orbits. 
The study of multiplicative noise and nonlinear dissipation in these systems is greatly facilitated by the geometric structure of the equations for coadjoint motion. 
Indeed, a large part of standard dynamical system theory will still apply in our setting. In particular, the proof of existence of random attractors will follow a standard approach. 
We will mainly focus on this particular feature of random attractors of our systems, as it is an important diagnostic and has recently been an active field of research. 
The main idea behind the random attractor is the decomposition of the invariant measure of the Fokker-Planck equation into random measures, called Sinai-Ruelle-Bowen, or SRB measures, whose expectation recovers the invariant measure of the Fokker-Planck equation. See, e.g., \cite{young2002srb} for a short insightful review. 
Random attractors can also help in understanding the notion of reliability in complex dynamical systems, see for example \cite{lu2013reliability}. These ideas have recently been developed and applied actively  in the context of climate studies. For example, see \cite{chekroun2011stochastic} for discussions and illustrations of how the notion of random SRB measures and random attractors enable the investigation of detailed geometric structures of the random attractors associated with nonlinear, stochastically perturbed systems. In particular, high-resolution numerical studies of two idealised models of interest for climate dynamics (the Jin97 ENSO model and the Lorenz63 model) are reported in \cite{chekroun2011stochastic}. The present work follows a similar line of investigation for a class of nonlinear, stochastically perturbed systems which exhibit coadjoint motion.
The proof of existence of non-singular SRB measures requires some work, but it can be accomplished for our general class of mechanical systems written on semi-simple Lie algebras.
Although geometric mechanics can also describe infinite dimensional systems such as fluid mechanics, \cite{holm1998euler}, we will only focus here on finite dimensional systems, and in particular on systems described by semi-simple Lie algebras. 
The natural non-degenerate and bi-invariant pairing admitted by semi-simple Lie algebras will facilitate the computations involved in proving our results, although some of the results may still apply more generally.

In the Euler-Poincar\'e theory, introducing a parameter into the Lagrangian or Hamiltonian which breaks the symmetry results in a semidirect product of groups acting on coset spaces representing the order parameters, or advected quantities, which are not invariant under the original symmetry group \cite{holm1998euler}. 
We will apply the theory of stochastic deformations that preserve coadjoint orbits for a particular class of semidirect product systems whose advected quantities live in the underlying vector space of the Lie algebra $\mathfrak g$.  
With this particular structure, which can be viewed as a generalisation of the heavy top, we will be able to prove the existence of SRB measures. 
Although much of the present theory may also apply for more general systems than we treat here, as a first investigation we will show that these particular mechanical systems in geometric mechanics exhibit interesting random attractors when both noise and a certain type of double bracket dissipation are included. 

As illustrations, we will discuss in detail two canonical elementary examples in the science of stochastic dissipative geometric mechanics. 
These two examples are the rigid body and the heavy top, which are also the well known canonical examples for understanding symmetry reduction for deterministic geometric mechanics, \cite{marsden1999intro,holm2008geometric,holm2009geometry}. As mentioned earlier, the extensions here to include stochasticity and dissipation which preserve coadjoint orbits may be regarded as natural counterparts for geometric mechanics of the standard nonlinear dissipative systems, such as the stochastic Lorenz systems, treated, e.g., in \cite{chekroun2011stochastic, kondrashov2015data}.

\subsection*{Main contributions of this work}
Section \ref{noise} uses the Clebsch approach of \cite{holm2015variational} to introduce noise into the Euler-Poincar\'e equation for the momentum map, including its extension for semidirect product Lie symmetry groups. By construction, the momentum map for the stochastic vector field is the same as that for the deterministic vector field, so the stochastic and deterministic Euler-Poincar\'e equations for the momentum map may be compared directly. 
Section \ref{dissipation} introduces the selective decay mechanism for dissipation and studies the existence of random attractors. 
The first example of the Euler-Poincar\'e equation is treated in Section \ref{RB} with the free rigid body. 
Section \ref{HT} treats the Heavy Top as an example of the semidirect product extension.
Finally, Section \ref{others} briefly sketches the treatments of two other examples, the $SO(4)$ free rigid body and the spring pendulum.

\section{Structure preserving stochastic mechanics}\label{noise}

Stochastic Hamilton equations were introduced along parallel lines with the deterministic canonical theory in \cite{bismut1982mecanique}. 
These results were later extended to include reduction by symmetry in \cite{lazaro2008stochastic}.   
Reduction by symmetry of expected-value stochastic variational principles for Euler-Poincar\'e equations was developed in \cite{arnaudon2014stochastic,chen2015constrained}.
Stochastic variational principles were also used in constructing stochastic variational integrators in \cite{bourabee2009stochastic}.

The present work is based on recent work of \cite{holm2015variational}, which used variational principles to introduce noise in fluid dynamics. This variational approach was developed further for fluids with advected quantities in \cite{gaybalmaz2016geometric}.
The inclusion of noise in fluid equations has a long history in the scientific literature. For reviews and recent advances in stochastic turbulence models, see \cite{kraichnan1994anomalous}, \cite{gawedzki1996university}; and in the analysis of stochastic Navier-Stokes equations, see \cite{mikulevicius2001equations}. 
These studies of the stochastic Navier-Stokes equation are fundamental in the analysis of fluid turbulence.  
Expected-value stochastic variational principles leading to the derivation of the Navier-Stokes motion equation for incompressible viscous fluids have been investigated in \cite{arnaudon2012lagrangian}. For further references, we refer to \cite{holm2015variational}.

The present section incorporates stochasticity into finite dimensional mechanical systems admitting Lie group symmetry reduction, by using the standard Clebsch variational method for deriving cotangent lifted momentum maps. 
We review the standard approach to Lie group reduction by symmetry for finite dimensional systems in \ref{EP-section} and incorporate noise into this approach in section \ref{EP-noise-section}.
Next, we describe the semidirect extension in \ref{SM-section} and study the associated Fokker-Planck equations and invariant measures in \ref{FP-section}. The primary examples from classical mechanics with symmetry will be the free rigid body and the heavy top under constant gravity. 

\subsection{Euler-Poincar\'e reduction}\label{EP-section}
Classical mechanical systems with symmetry can often be understood geometrically in the context of Lagrangian or Hamiltonian reduction, by lifting the motion $q(t)$ on the configuration manifold $Q$ to a Lie symmetry group via the action of the symmetry group $G$ on the configuration manifold, by setting $q(t)=g(t)q(0)$, where the multiplication is to be understood as the action of $G$ to $Q$. 
This procedure lifts the solution of the motion equation from a curve $q(t)\in Q$ to a  curve $g(t)\in G$, see \cite{marsden1999intro, holm2008geometric}. 
The simplest case is when $Q= G$. This case, called Euler-Poincar\'e reduction, will be described in the present section. 

In the Lagrangian framework, reduction by symmetry may be implemented in Hamilton's principle via restricted variations in the reduced variational principle arising from variations on the corresponding Lie group. 
In the standard approach, for an arbitrary variation $\delta g$ of a curve $g(t)\in G$ in a Lie group $G$, the left-invariant reduced variables are $g^{-1}\dot g\in \mathfrak g$ in the Lie algebra $\mathfrak g= T_eG$. Their variations arise from variations on the Lie group and are given by 
\[\delta \xi = \dot \eta + \mathrm{ad}_\eta\xi\,,\] 
for $\eta:= g^{-1}\delta g$. Here, the operation $\mathrm{ad}:\mathfrak g\times \mathfrak g\to \mathfrak g$ represents the adjoint action of the Lie algebra on itself via the Lie bracket, denoted equivalently as $\mathrm{ad}_\xi \eta= [\xi,\eta]$, and we will freely use either notation throughout the text. 
If the Lagrangian $L(g,\dot g)$ is left-invariant under the action of $G$, the restricted variations $\delta \xi$ of the reduced Lagrangian $L(e,g^{-1}\dot g)=:l(\xi)$ inherited from admissible variations of the solution curves on the group yield the Euler-Poincar\'e equation
\begin{align}
	\frac{d}{dt}\frac{\partial l(\xi)}{\partial \xi} + \mathrm{ad}^*_\xi \frac{\partial l(\xi)}{\partial \xi}=0
	\,.
	\label{EP}
\end{align}
In this equation, ${\rm ad}^*: \mathfrak g\times \mathfrak g^*\to\mathfrak g^*$ is the dual of the adjoint Lie algebra action, ad. That is, $\langle{\rm ad}^*_\xi\mu,\eta \rangle=\langle{\mu,\rm ad}_\xi\eta \rangle$ for $\mu\in\mathfrak{g}^*$ and $\xi,\eta\in\mathfrak{g}$, under the nondegenerate pairing $\langle\,\cdot\,,\, \cdot \,\rangle: \mathfrak g\times \mathfrak g^*\to\mathbb{R}$. 
Throughout this paper, we will restrict ourselves to semi-simple Lie algebras, so that the pairing is given by the Killing form, defined as 
\begin{align}
	\kappa(\xi,\eta) := \mathrm{Tr}\left ( \mathrm{ad}_\xi\mathrm{ad}_\eta\right ).
	\label{Killing}
\end{align}
In terms of the structure constants of the Lie algebra denoted as $c_{jk}^i$ for a basis $e_i,\,i=1,\dots,{\rm dim}(\mathfrak{g})$, so that $[e_i,e_j]=c_{jk}^ie_i$, in which $\xi=\xi^ie_i$ and $\eta=\eta^je_j$, the Killing form takes the  explicit form
\begin{align*}
	\mathrm{Tr}(\mathrm{ad}_\xi\mathrm{ad}_\eta)= c_{im}^nc_{jn}^m \xi^i \eta^j.
\end{align*}
An important property of this pairing is its bi-invariance, written as  
\begin{align}
	\kappa (\xi, \mathrm{ad}_\zeta \eta) = \kappa (\mathrm{ad}_\xi \zeta,\eta),
	\label{bi-invariance}
\end{align}
for every $\xi,\eta,\zeta \in \mathfrak{g}$.
This pairing allows us to identify the Lie algebra with its dual, as the Killing form semi-simple Lie algebras is non-degenerate.
We will also use the property for compact Lie algebras, that the Killing form is negative definite and thus induces a norm on the Lie algebra, $\|\xi\|^2:=-\kappa(\xi,\xi)$. 
This function turns out to always be an invariant function on the coadjoint orbit, for every semi-simple Lie algebra; that is, it is a Casimir function.
Indeed, an invariant function is in the kernel of the Lie-Poisson bracket $\{F,C\}(\mu) := \kappa\left ( \mu, \left [\frac{\partial F}{\partial \mu},\frac{\partial C}{\partial \mu}\right ]\right )$, $\forall F\in C(\mathfrak g^*, \mathbb R)$. 
For $C(\mu)= \frac12 \kappa(\mu,\mu)$ it is straightforward to check using the bi-invariance \eqref{bi-invariance} that this is true for any function $F$. 
In general, it is difficult to find other independent Casimir functions of semi-simple Lie algebra; see \cite{perelomov1068casimir,thiffeault2000classification}.
Of course, the theory of semi-simple Lie algebras is standard and well developed, see for example \cite{varadarajan1984lie}. However, for the sake of clarity, we will express the abstract notations of adjoint and coadjoint actions with respect to the Killing form. 
We may then identify $\mathfrak g^* \cong \mathfrak g$ for each semi-simple Lie algebra we treat here. 

We now turn to the equivalent Clebsch formulation of the Euler-Poincar\'e equations via a constrained Haamilton's principle, which we will use for implementing the noise in these systems. 
The Clebsch formulation of the Euler-Poincar\'e equation and its corresponding Lie-Poisson bracket on the Hamiltonian side has been explored extensively in ideal fluid dynamics \cite{holm1983poisson, marsden1983coadjoint} and more recently in optimal control problems \cite{gay2011clebsch} and stochastic fluid dynamics \cite{holm2015variational}. 
This earlier work should be consulted for detailed derivations of Clebsch formulations of Euler-Poincar\'e equations in the contexts of ideal fluids and optimal control problems. We will briefly sketch the Clebsch approach, as specialised to the applications treated here; since we will rely on it for the introduction of noise in finite-dimensional mechanical systems by following the approach of \cite{holm2015variational} for stochastic fluid dynamics. 
We first introduce the Clebsch variables $q\in \mathfrak g$ and $p\in \mathfrak g^*$, where $p$ will be a Lagrange multiplier which enforces the dynamical evolution of $q$ given by the Lie algebra action of $\xi\in \mathfrak g$, as $\dot q+ \mathrm{ad}_\xi q= 0$.
Note the similarity of this equation with the constrained variations of the Lagrangian reduction theory. 
The Clebsch method in fluid dynamics (resp. optimal control) introduces auxiliary equations for advected quantities (resp. Lie algebra actions on state variables) as constraints in the Hamilton (resp. Hamilton-Pontryagin) variational principle $\delta S = 0$ with constrained action
\begin{align}
	S(\xi,q,p) =\int  l(\xi) dt + \int \langle p, \dot{q} + \mathrm{ad}_\xi q \rangle  dt\,.
	\label{Clebsch-action}
\end{align}
Taking free variations of $S$ with respect to $\xi,q$ and $p$ yields a set of equations for these three variables which can be shown to be equivalent to the Euler-Poincar\'e equation \eqref{EP}. 
The relation between the Lie algebra vector $\xi\in \mathfrak g$ and the phase-space variables $(q,p)\in T_e^*G$ is given by the variation of the action $S$ with respect to the velocity $\xi$ in \eqref{Clebsch-action}. This variation yields the momentum map, $\mu:T_e^*G \to \mathfrak g^*$, given explicitly by 
\begin{align}
	\mu := \frac{\partial l(\xi)}{\partial \xi} = \mathrm{ad}^*_qp.
\end{align}
Unless specified otherwise, we will always use the notation $\mu$ for the conjugate variable to $\xi$. 
This version of the Clebsch theory is especially simple, as the Clebsch variables are also in the Lie algebra $\mathfrak g$. 
In general, it is enough for them to be in the cotangent bundle of a manifold $T^*M$ on which the group $G$ acts by cotangent lifts. 
In this more general case, the adjoint and coadjoint actions must be replaced by their corresponding actions on $T^*M$ but the method remains the same. 
Another generalisation, which will be useful for us later, allows the Lagrangian to depend on both $\xi$ and $q$. 
In this case, the Euler-Poincar\'e equation will acquire additional terms depending on $q$ and the Clebsch approach will be equivalent to semidirect product reduction \cite{holm1998euler}. 
We will consider a simple case of this extension in Section \ref{SM-section} and in the treatment of the heavy top in Section \ref{HT}.

\subsection{Structure preserving stochastic deformations}\label{EP-noise-section}

We are now ready to deform the Euler-Poincar\'e equation \eqref{EP} by introducing noise in the constrained Clebsch variational principle \eqref{Clebsch-action}. 
In order to do this stochastic deformation, we introduce $n$ independent Wiener processes $W_t^i$ indexed by $i=1,2,\dots,n,$ and their associated stochastic potential fields $\Phi_i(q,p) \in \mathbb R$ which are prescribed functions of the Clebsch phase-space variables, $(q,p)$.
{ The stochastic processes used here are standard Weiner processes, as discussed, e.g., in \cite{chen2015constrained, ikeda2014stochastic}. }
The number of stochastic processes can be arbitrary, but usually we will assume it is equal to the dimension of the Lie algebra, $n={\rm dim}(\mathfrak g)$. 
The constrained stochastic variational principle is then given by 
\begin{align}
	S(\xi,q,p) =\int  l(\xi) dt + \int \langle p,\circ  dq + \mathrm{ad}_\xi q\, dt \rangle   + \int \sum_{i=1}^n \Phi_i(q,p)\circ dW^i_t.
		\label{Sto-Clebsch-action}
\end{align}
In the stochastic action integral \eqref{Sto-Clebsch-action} and hereafter, the multiplication symbol $\circ$ denotes a stochastic integral in the Stratonovich sense. 
The Stratonovich formulation is the only choice of stochastic integral that admits the classical rules of calculus (e.g., integration by parts, the change of variables formula, etc.). Therefore, the Stratonovich formulation is indispensable in variational calculus and in optimal control. The free variations of the action functional \eqref{Sto-Clebsch-action} may now be taken, and they will yield stochastic processes for the three variables $\xi,q$ and $p$. 

For convenience in the next step of deriving a stochastic Euler-Poincar\'e equation, we will assume that the Lagrangian $l(\xi)$ in the action \eqref{Sto-Clebsch-action}  is hyperregular, so that $\xi$ may be obtained from the fibre derivative $\frac{\partial l(\xi)}{\partial \xi} = \mathrm{ad}^*_qp$. 
We will also specify that the stochastic potentials $\Phi_i(q,p)$ should depend only on the momentum map $\mu= \mathrm{ad}^*_q p$ so that the resulting stochastic equation will be independent of $q$ and $p$.  
Following the detailed calculations in \cite{holm2015variational}, we then find the stochastic Euler-Poincar\'e equation
\begin{align}
	d \frac{\partial l(\xi)}{\partial \xi} + \mathrm{ad}^*_\xi \frac{\partial l(\xi)}{\partial \xi} dt 
	- \sum_i \mathrm{ad}^*_\frac{\partial \Phi_i(\mu)}{\partial \mu}  \frac{\partial l(\xi)}{\partial \xi}\circ dW_t^i=0\,.
	\label{SEP}
\end{align}
In terms of the stochastic process 
\begin{align}
dX= \xi dt - \sum_i \frac{\partial \Phi_i(\mu)}{\partial \mu} \circ dW_t^i
\,,\quad\hbox{with}\quad
\mu=\frac{\partial l(\xi)}{\partial \xi},
	\label{dX-def}
\end{align} 
the stochastic Euler-Poincar\'e equation \eqref{SEP} may be expressed in compact form, as
\begin{align}
	d\mu+ \mathrm{ad}^*_{dX} \mu=0\,.
	\label{SEP-simple}
\end{align}
The introduction of noise in the Clebsch-constrained variational principle rather than using reduction theory provides a transparent approach for dealing with stochastic processes on Lie groups and constrained variations arising for such processes. 
In this approach, the momentum map plays the same central role in both the deterministic and stochastic formulations. 
See \cite{arnaudon2014stochastic} for a different approach, resulting in the derivation and analysis of deterministic expectation-value Euler-Poincar\'e equations using reduction by symmetry with conditional expectation. 

\begin{remark}[Reduction with noise]
	\label{reduction-noise}
	The stochastic Euler-Poincar\'e equation \eqref{SEP} arises from a stochastic reduction by symmetry, as follows. First, the reconstruction relation $\dot g= g \xi$ in the deterministic case has its stochastic counterpart
	\begin{align}
		dg= g \xi dt + \sum_i  g\sigma_i \circ dW_t^i\, ,
		\label{dg-eqn}
	\end{align}
where $\sigma_i:= -\,\frac{\partial \Phi_i(\mu)}{\partial \mu} $, and the expressions $g \xi $  and $g \sigma_i $ are understood as the tangent of the left action of the group on itself; or equivalently, the left action of the group on its Lie algebra. 
	Equation \eqref{SEP} then results from taking the variation of $g^{-1} dg$ with \eqref{dg-eqn} and comparing with the derivative of $g^{-1}\delta g$ while setting $d(\delta g)= \delta (dg)$.  
	This gives the variation $\delta\xi$ as
	\begin{align}
		\delta\xi = d\eta + \mathrm{ad}_{(g^{-1} dg)}\eta
		= d\eta + \mathrm{ad}_\xi\eta\,  dt + \sum_i \mathrm{ad}_{\sigma_i}\eta  \circ dW^i_t,
		\label{delta-xi-eqn}
	\end{align}	 
	where $ \eta = g^{-1}\delta g\in \mathfrak g$ is arbitrary, except that $\delta g(0)=0=\delta g(T)$ at the endpoints in time $t\in[0,T]$.
	Then, using these constrained variations in the reduced variational principle $\delta \int l(\xi)dt = 0$ yields the stochastic Euler-Poincar\'e equation \eqref{SEP}, by the following calculation, 
	\begin{align}\begin{split}
		0 & = \delta \int l(\xi)dt = \int \left\langle \frac{\delta l}{\delta \xi} \,,\, \delta\xi \right\rangle dt
		    = \int \left\langle \frac{\delta l}{\delta \xi} \,,\, d\eta + \mathrm{ad}_{(g^{-1} dg)}\eta \right\rangle dt 
	        \\& = \int \left\langle -\,d\frac{\delta l}{\delta \xi} + \mathrm{ad}^*_{(g^{-1} dg)} \frac{\delta l}{\delta \xi}
		\,,\, \eta \right\rangle dt + \left\langle \frac{\delta l}{\delta \xi} \,,\, \eta \right\rangle \bigg|_0^T
	\end{split}	
	\label{StochEP-eqn}
	\end{align}	 
upon imposing 	the condition that $\eta$ vanishes at the endpoints in time.
\end{remark}
As in the deterministic case, various generalisations of this theory are possible. 
For example, as mentioned earlier, the Clebsch phase-space variables can also be defined in $T^*M$, and the Lagrangian can depend on $q$ for systems of  semidirect product type \cite{gaybalmaz2016geometric}.
Another generalisation is to let the stochastic potentials $\Phi_i(\mu)$ also depend separately on $q$ in the semidirect product setting, as we will see later. 

After having defined the Stratonovich stochastic process \eqref{SEP}, one may compute its corresponding It\^o form, which is readily given in terms of the $ \mathrm{ad}^*$ operation by 
\begin{align}
	\begin{split}
	d \frac{\partial l(\xi)}{\partial \xi} + \mathrm{ad}^*_\xi \frac{\partial l(\xi)}{\partial \xi} dt &+ \sum_i \mathrm{ad}^*_{\sigma_i}  \frac{\partial l(\xi)}{\partial \xi}dW_t^i - \frac12 \sum_i \mathrm{ad}^*_{\sigma_i}\left ( \mathrm{ad}^*_{\sigma_i}\frac{\partial l(\xi)}{\partial \xi}\right ) dt=0,
	\end{split}
	\label{SEP-simple-ito}
\end{align}
where $\sigma_i:= -\,\frac{\partial \Phi_i(\mu)}{\partial \mu} $. 
Notice that the indices for $\sigma_i$ in the It\^o sum in \eqref{SEP-simple-ito} are the same, and the $\sigma_i$ may be taken as a basis of the underlying vector space. 
In terms of $\mu:= \frac{\partial l(\xi)}{\partial \xi} $ the It\^o stochastic Euler-Poincar\'e equation \eqref{SEP-simple-ito} may be expressed equivalently as 
\begin{align}
	d \mu + \mathrm{ad}^*_\xi \mu \mbox{ }dt + \sum_i \mathrm{ad}^*_{\sigma_i}  \mu  \mbox{ }dW_t^i 
	- \frac12 \sum_i \mathrm{ad}^*_{\sigma_i}\left (\mathrm{ad}^*_{\sigma_i}\mu\right ) dt=0\,.
	\label{SEP-simple-ito-mu}
\end{align}

Another formulation of the stochastic Euler-Poincar\'e equation in \eqref{SEP} which will be used later in deriving the Fokker-Planck equation is the stochastic Lie-Poisson equation
\begin{align}
	d f(\mu) &= \left \langle \mu, \left[ \frac{\partial f}{\partial \mu},  \frac{\partial h}{\partial \mu}\right ] \right \rangle dt + \sum_i\left \langle \mu, \left[ \frac{\partial f}{\partial \mu},  \frac{\partial \Phi_i}{\partial \mu}\right ] \right \rangle \circ dW_t^i\\
	&=: \{f,h\} dt +\sum_i \{ f,\Phi_i\} \circ dW_i\,,
	\label{LP-sto}
\end{align}
where the Lie-Poisson bracket $\{\cdot ,\cdot \}$ is defined just as in the deterministic case, from the adjoint action and the pairing on the Lie algebra $\mathfrak g$. 

\subsection{The extension to semidirect product systems}\label{SM-section}
{
As discussed in \cite{holm1998euler}, ``It turns out that semidirect products occur under rather general
circumstances when the symmetry in $T^*G$ is broken''. The geometric mechanism for this remarkable fact is that under reduction by symmetry, a semidirect product of groups emerges whenever the symmetry in the phase space is broken. The symmetry breaking produces new dynamical variables, living in the coset space formed from taking the quotient $G/G_a$ of the original unbroken symmetry $G$ by the remaining symmetry $G_a$ under the isotropy subgroup of the new variables. These new dynamical variables form a vector space $G/G_a\simeq V$ on which the unbroken symmetry acts as a semidirect product, $G\,\circledS\,V$. In physics, elements of the vector space $V$ corresponding to the new variables are called ``order parameters''. Typically, in physics, the original symmetry is broken by the introduction of potential energy depending on variables which reduce the symmetry to the isotropy subgroup of the new variables. Dynamics on the semidirect product $G\,\circledS\,V$ results, and what had previously been flow under the action of the unbroken symmetry now becomes flow plus waves, or oscillations, produced by the exchange of energy between its kinetic and potential forms. 
The heavy top is the basic example of this phenomenon, and it will be treated in Section \ref{HT}. The semidirect product motion for the heavy top arises in the presence of gravity, when the support point of a freely rotating rigid body is shifted away from its centre of mass. 

With this connection between symmetry breaking and semidirect products in mind, we} now extend the stochastic Euler-Poincar\'e equations to include semidirect product systems. We refer to \cite{holm1998euler} for a complete study of these systems.
Although the deterministic equations of motion in \cite{holm1998euler} are derived from reduction by symmetry, we will instead incorporate noise by simply extending the Clebsch-constrained variational principle used in the previous section.

The generalisation proceeds, as follows. We will begin by assuming that the Clebsch phase-space variables comprise the elements of $T^*V$ for a given vector space $V$ on which the Lie group $G$ acts freely and properly. 
In fact we will have $(q,p)\in V\times V^*$.
However, in this work, we will restrict ourselves to the case where $V$ is the underlying vector space of $\mathfrak g$. 
Following the notation of \cite{ratiu1981euler}, we denote $\overline{ \mathfrak g}= V$ in the sequel. 
Then, from the Killing form on $\mathfrak g$, denoted by $\kappa:\mathfrak g\times \mathfrak g \to \mathbb  R $, there is a bi-invariant extension of the Killing form on $\mathfrak g\circledS \overline {\mathfrak g }$ defined as 
\begin{align}
	\kappa_s \left ( ( \xi_1, \xi_2) , (\eta_1, \eta_2) \right ) := \kappa( \xi_1,\eta_2) + \kappa( \xi_2, \eta_1).
	\label{kappaS}
\end{align}
Although this pairing is non-degenerate and bi-invariant, we will not use it for the definition of the dual of the semidirect algebra $\mathfrak g\circledS V$. Instead, we will use the sum of both Killing forms, namely 
\begin{align}
	\kappa_0 \left ( ( \xi_1, \xi_2) , (\eta_1, \eta_2) \right ) := \kappa( \xi_1,\eta_1) + \kappa( \xi_2, \eta_2).
	\label{kappa0}
\end{align}
The group action is defined via the adjoint representation of $G$ on $V=\overline{\mathfrak g}$, given by  $(g_1,\eta_1)(g_2,\eta_2) = (g_1g_2, \eta_1  + \mathrm{Ad}_{g_1}\eta_2)$. 
We then directly have  the infinitesimal adjoint and coadjoint actions as
\begin{align}
\begin{split}
	\mathrm{ad}_{(\xi_1,q_1)} (\xi_2,q_2) &= ( \mathrm{ad}_{\xi_1}\xi_2 , \mathrm{ad}_{\xi_1}q_2+\mathrm{ad}_{q_1}\xi_2)\,,\\
	\mathrm{ad}^*_{(\xi,q)} (\mu,p) &= ( \mathrm{ad}^*_\xi\mu + \mathrm{ad}^*_qp, \mathrm{ad}^*_\xi p),
	\label{SD-prod}
\end{split}
\end{align}
where the coadjoint action is taken with respect to $\kappa_0$ in \eqref{kappa0}. 

The extended Killing form $\kappa_s$ defined in \eqref{kappaS}, gives, apart from $\kappa(\eta,\eta)$ with $\eta\in \overline {\mathfrak g}$, a second invariant function on the coadjoint orbit
\begin{align*}
	\kappa_s\left ( (\xi,\eta), (\xi,\eta)\right ) = 2\kappa(\xi, \eta),
\end{align*}
found from the bi-invariance of the Killing form $\kappa_s$. 
One then replaces the corresponding Lie algebra actions in the Clebsch-constrained variational principle \eqref{Sto-Clebsch-action}, to obtain the stochastic process with semidirect product
\begin{align}
	\begin{split}
	d \left ( \mu, q \right ) &+ \mathrm{ad}^*_{(\xi,r)}\left (  \mu , q\right ) dt  + \sum_i \mathrm{ad}^*_{\left ( \frac{\partial \Phi_i(\mu,q)}{\partial \mu},\frac{\partial \Phi_i(\mu,q)}{\partial q}\right) } \left (\mu, q \right )\circ dW_t^i=0,
	\end{split}
	\label{SDE-SM}
\end{align}
where $l:\mathfrak g\circledS V\to \mathbb R $, $\Phi_i:\mathfrak g^*\circledS V^*\to \mathbb R$ and 
\begin{align}
	\frac{\partial l(\xi,q)}{\partial \xi}=: \mu
	\qquad \mathrm{and}\qquad 
	 \frac{\partial l(\xi,q)}{\partial q}=: r\,.
\end{align}
Consequently, after taking the Legendre transform of $l$, we have the Hamiltonian derivatives
\begin{align}
	\frac{\partial h(\mu,q)}{\partial \mu}=: \xi
	\qquad \mathrm{and}\qquad 
	 \frac{\partial h(\mu,q)}{\partial q}=: -\,r\,,
	 \label{hr-defs} 
\end{align}
for $h:\mathfrak g^*\circledS V^*\to \mathbb R$.
By substituting into \eqref{SDE-SM} the expressions in \eqref{SD-prod} for the coadjoint action, we obtain the system  
\begin{align}
\begin{split}
	d \mu &+ \left ( \mathrm{ad}^*_\xi \mu  + \mathrm{ad}^*_r q\right ) dt  + \sum_i \left( \mathrm{ad}^*_\frac{\partial \Phi_i(\mu,q)}{\partial \mu} \mu  + \mathrm{ad}^*_\frac{\partial \Phi_i(\mu,q)}{\partial q} q \right )   \circ dW_t^i  =0
	\,,\\
	d q & +  \mathrm{ad}^*_\xi q  dt  + \sum_i  \mathrm{ad}^*_\frac{\partial \Phi_i(\mu,q)}{\partial \mu}q \circ dW_t^i=0\,.
	\label{SDE-SM-system}
\end{split}
\end{align}
Although the number of stochastic potentials $\Phi_i$ which one may consider is arbitrary, we shall find it convenient for our purposes to restrict to a maximum of $n= \mathrm{dim}(\mathfrak g) + \mathrm{dim}(V)$ such potentials. 
In fact, the potentials associated with $V$ will not actually be fully treated here. 

The semidirect product theory we have described here is the simplest instance of it, as we are using a particular vector space $V$.
In general, the advected quantities can also be in a Lie algebra, or an arbitrary manifold, provided the action of the group $G$ on it is free and proper \cite{gaybalmaz2016geometric}.

\subsection{The Fokker-Planck equation and invariant distributions}\label{FP-section}

We derive here a geometric version of the classical Fokker-Planck equation (or forward Kolmogorov equation) using our SDE \eqref{SEP}. 
Recall that the Fokker-Planck equation describes the time evolution of the probability distribution $\mathbb{P}$ for the process driven by \eqref{SEP}. 
We refer to \cite{ikeda2014stochastic} for the standard textbook treatment of stochastic processes. 
Here, we will consider $\mathbb P$ as a function $C(\mathfrak g^*)$ with the additional property that $\int_\mathfrak{g^*} \mathbb P d\mu= 1$. 
First, the generator of the process \eqref{SEP} can be readily found from the Lie-Poisson form \eqref{SEP-simple-ito}  of the stochastic process \eqref{LP-sto} to be 
\begin{align}
	Lf(\mu) = \left \langle \mathrm{ad}^*_\xi\mu ,\frac{\partial f}{\partial \mu}\right \rangle - \sum_i\left \langle \mathrm{ad}^*_{\sigma_i}\mu,\frac{\partial}{\partial \mu} \left \langle \mathrm{ad}^*_{\sigma_i}\mu ,\frac{\partial f}{\partial \mu}\right \rangle\right \rangle,
	\label{FP-gen}
\end{align}
where $\langle\,\cdot \,,\, \cdot \,\rangle$ still denotes the Killing form on the Lie algebra $\mathfrak g$ and $f\in C(\mathfrak g^*$) is an arbitrary function of $\mu$. 
Provided that the  $\Phi_i$'s are linear functions of the momentum $\mu$, the diffusion terms of the infinitesimal generator $L$ will be self-adjoint with respect to the $L^2$ pairing $\int_\mathfrak{g^*} f(\mu)\mathbb P(\mu)d\mu$. 
If the  $\Phi_i$'s are not linear, the advection terms of $L^*$ will contain other terms. However, since we will restrict our considerations to the case of linear stochastic potentials, mainly for practical reasons, we will refer to \eqref{FP-gen} and its analogues as the Fokker-Planck operator $L^*$. 

The Fokker-Planck equation describes the dynamics of the probability distribution $\mathbb P$ associated to the stochastic process for $\mu$, in the standard advection diffusion form. 
Another step can be taken to highlight the underlying geometry of the Fokker-Planck equation \eqref{FP-gen}, by rewriting it in terms of the Lie-Poisson bracket structure as
\begin{align}
	\frac{d}{dt} \mathbb P + \{ h,\mathbb P\} -  \sum_i\{\Phi_i, \{\Phi_i,\mathbb P\}\}=0 \,,
	\label{HamFP}
\end{align}
where $h(\mu)$ is the Hamiltonian associated to $l(\xi)$ by the Legendre transform. 
In \eqref{HamFP}, we recover the Lie-Poisson formulation \eqref{LP-sto} of the Euler-Poincar\'e equation together with a dissipative term arising from the noise of the original SDE in a double Lie-Poisson bracket form. 

This formulation gives the following theorem for invariant distributions of \eqref{FP-gen}:
\begin{theorem}\label{limit-thm}
	The invariant distribution $\mathbb P_\infty$ of the Fokker-Planck equation \eqref{FP-gen}, i.e, $L^*\mathbb P_\infty=0$ is \emph{uniform} on the coadjoint orbits on which the SDE \eqref{SEP} evolves. 
\end{theorem}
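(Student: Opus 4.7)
The plan is to combine the Lie-Poisson form \eqref{HamFP} of the Fokker-Planck equation with the fact that the coadjoint orbits are the symplectic leaves of the Lie-Poisson bracket on $\mathfrak{g}^*$. The first step is to observe that a function $\mathbb{P}_\infty\in C(\mathfrak{g}^*)$ that is constant on each coadjoint orbit is a Casimir of the Lie-Poisson bracket: its differential $\partial \mathbb{P}_\infty/\partial \mu$ annihilates every tangent vector $X_f(\mu)=\mathrm{ad}^*_{\partial f/\partial \mu}\mu$ to the orbit through $\mu$, which, by bi-invariance \eqref{bi-invariance} of the Killing form, is equivalent to
\begin{align*}
\{f,\mathbb{P}_\infty\}(\mu) = \kappa\!\left(\mu,\left[\frac{\partial f}{\partial \mu},\frac{\partial \mathbb{P}_\infty}{\partial \mu}\right]\right) = 0
\end{align*}
for every $f\in C(\mathfrak{g}^*)$.

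The second step is a direct substitution into \eqref{HamFP}. The transport term $\{h,\mathbb{P}_\infty\}$ vanishes because $\mathbb{P}_\infty$ is a Casimir, and each diffusion term vanishes because the inner bracket $\{\Phi_i,\mathbb{P}_\infty\}$ is already zero for the same reason, so $\{\Phi_i,\{\Phi_i,\mathbb{P}_\infty\}\}=\{\Phi_i,0\}=0$. This shows $L^*\mathbb{P}_\infty=0$ for any such $\mathbb{P}_\infty$. Since both the drift and the diffusion vector fields in \eqref{SEP-simple-ito-mu} have the form $\mathrm{ad}^*_{(\cdot)}\mu$, the process is confined to the coadjoint orbit $\mathcal{O}$ of its initial data, and an orbit-constant density on $\mathfrak{g}^*$ corresponds precisely to the uniform distribution on $\mathcal{O}$ with respect to the Kirillov-Kostant-Souriau symplectic volume.

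The main point requiring care is the identification of ``uniform''. The density $\mathbb{P}$ in \eqref{HamFP} is defined against Lebesgue measure on $\mathfrak{g}^*$, whereas the natural reference on $\mathcal{O}$ is the Liouville symplectic volume $\mathrm{vol}_\mathcal{O}$. To reconcile the two, one decomposes Lebesgue measure into an integral over the space of orbits with fibre $\mathrm{vol}_\mathcal{O}$; equivalently, since the Hamiltonian vector fields $X_{\Phi_i}$ are divergence-free with respect to $\mathrm{vol}_\mathcal{O}$, the restriction of the double-bracket operator to $\mathcal{O}$ is symmetric on $L^2(\mathcal{O},\mathrm{vol}_\mathcal{O})$ with constants in its kernel. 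Uniqueness of the invariant distribution on each orbit, if one wished to upgrade the statement, would require a separate hypoellipticity/irreducibility argument on the potentials $\Phi_i$; the theorem as stated only requires the characterization above, which follows from the Casimir calculation.
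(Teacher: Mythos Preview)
Your argument establishes the converse of what the theorem asserts. You show that any function constant on coadjoint orbits (a Casimir) is annihilated by $L^*$; this is immediate from the Lie-Poisson form \eqref{HamFP} and is not in dispute. But the content of Theorem~\ref{limit-thm} is the other implication: every solution of $L^*\mathbb{P}_\infty=0$ must be constant on orbits. Your final paragraph explicitly defers this to a ``separate hypoellipticity/irreducibility argument'', but that argument \emph{is} the theorem. Without it you have only exhibited constants as \emph{some} stationary solutions, not characterised the invariant distribution.

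The paper supplies exactly the missing step. It invokes the standard hypocoercivity fact (cf.\ Villani) that for an operator of the form $L=B+\sum_i A_i^2$ with $B$ antisymmetric and $A_i$ symmetric on $L^2$, one has $\ker(L)=\ker(B)\cap\bigcap_i\ker(A_i)$. Here $B=\{h,\cdot\}$ and $A_i=\{\Phi_i,\cdot\}$, so any $\mathbb{P}_\infty$ in the kernel of $L$ must satisfy $\{\Phi_i,\mathbb{P}_\infty\}=0$ for every $i$. Since the $\sigma_i=\partial\Phi_i/\partial\mu$ are assumed to span the Lie algebra, this forces $\mathbb{P}_\infty$ to Poisson-commute with a full set of linear functions, hence to be a Casimir. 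Your discussion of the Kirillov--Kostant--Souriau volume and the meaning of ``uniform'' is a welcome clarification the paper omits, but to complete the proof you need to insert the kernel decomposition $\ker(B+\sum_i A_i^2)=\ker(B)\cap\bigcap_i\ker(A_i)$ before your Casimir calculation, rather than after it.
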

\begin{proof}
By a standard result in functional analysis, see for example \cite{villani2009hypocoercive}, a linear differential operator of the form $L = B  + \sum_i A_i^2$ has the property that $\mathrm{ker}(L) = \mathrm{ker}(A_i)\cap \mathrm{ker}(B)$, where here $A_i = \{\Phi_i, \cdot \}$ and $B = \{h, \cdot \}$.
Consequently, for every smooth function $f$, the only functions $g$ which satisfy $\{ f,g \}= 0$ are the Casimirs, or invariant functions, on the coadjoint orbits. 
When restricted to a coadjoint orbit, these functions become constants.
Hence, the invariant distribution $\mathbb P_\infty$ is a \emph{constant} on the coadjoint orbit identified by the initial conditions of the system.
\end{proof}

Since the dynamics is restricted to the coadjoint orbits, for the probability distribution $\mathbb P$ to tend to a constant, yet remain normalisable, satisfying $\int_{\mathfrak g^*} \mathbb P(\mu)d\mu= 1$, the value of the density must tend to the inverse of the volume of the coadjoint orbit. 
Of course the compactness of the coadjoint orbit is equivalent to $\mathbb P_\infty>0$. 
For non-compact orbits, Theorem \ref{limit-thm} is still valid, and it will imply an asymptotically vanishing invariant distribution, in the same sense as for the invariant solution of the heat equation on the real line. 
In this case, a more detailed analysis of the invariant distribution can be performed by studying \emph{marginals}, or projections onto a compact subspace of the coadjoint orbit. 

Examples of non-compact coadjoint orbits arise in the semidirect product setting. 
First, the Fokker-Planck equation for the semidirect product stochastic process \eqref{SDE-SM} is given by
\begin{align}
	\begin{split}
	Lf(\mu,q) &= \left \langle \mathrm{ad}^*_{(\xi,r)} (\mu,q) ,\left ( \frac{\partial f(\mu,q)}{\partial \mu}, \frac{\partial f(\mu,q)}{\partial q}\right ) \right \rangle - \\
	&- \sum_i\left \langle \mathrm{ad}^*_{(\sigma_i,\eta_i)}(\mu,q), \left \{ \frac{\partial}{\partial \mu} \left \langle \mathrm{ad}^*_{(\sigma_i,\eta_i)}(\mu,q) ,\left ( \frac{\partial f(\mu,q)}{\partial \mu},\frac{\partial f(\mu,q)}{\partial q}\right )\right \rangle\right.\right ., \\
	&\hspace{4.2cm}, \left . \left .  \frac{\partial}{\partial q} \left \langle \mathrm{ad}^*_{(\sigma_i,\eta_i)}(\mu,q) ,\left ( \frac{\partial f(\mu,q)}{\partial \mu},\frac{\partial f(\mu,q)}{\partial q}\right )\right \rangle\right \}\right \rangle, 
	\end{split}
	\label{FP-SM}
\end{align}
where $\sigma_i :=- {\partial \Phi_i}/{\partial \mu}$ and $\eta_i := -{\partial \Phi_i}/{\partial q}$. 
The pairing used here is the sum of the pairings on $\mathfrak g$ and on $V$, given by $\kappa_0$ in \eqref{kappa0}. 
Note that for some values of index $i$, the vector fields $\sigma_i$ or $\eta_i$ may be absent. 
One can check that $L^*= L$; so that $L$ generates the Lie-Poisson Fokker-Planck equation for the probability density $\mathbb P(\mu,q)$. 
As before, upon using the explicit form of the coadjoint actions, one finds
\begin{align}
\begin{split}
	Lf(\mu,q) &= \left \langle \mathrm{ad}^*_\xi \mu + \mathrm{ad}^*_q r , \frac{\partial f}{\partial \mu} \right \rangle + \left\langle  \mathrm{ad}^*_\xi q ,\frac{\partial f}{\partial q}\right \rangle -\\
	&- \sum_i\left \langle \mathrm{ad}^*_{\sigma_i} \mu + \mathrm{ad}^*_q \eta_i , \frac{\partial A_i}{\partial \mu} \right \rangle -\sum_i \left\langle \mathrm{ad}^*_{\sigma_i} q  ,\frac{\partial A_i}{\partial q}\right \rangle, \\
	\noindent \mathrm{where}\qquad A_i:&= \left \langle \mathrm{ad}^*_{\sigma_i} \mu + \mathrm{ad}^*_q \eta_i , \frac{\partial f}{\partial \mu} \right \rangle + \left\langle \mathrm{ad}^*_{ \sigma_i} q ,\frac{\partial f}{\partial q}\right \rangle\, .
\end{split}
\end{align}
The Fokker-Planck equation \eqref{FP-SM} provides a direct corollary of Theorem \ref{limit-thm}.
\begin{corollary}
	The invariant probability density $\mathbb{P}_\infty(\mu,q)$ of \eqref{FP-SM} is constant on the coadjoint orbit corresponding to the initial conditions of the stochastic process \eqref{SDE-SM}. 
\end{corollary}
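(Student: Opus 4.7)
The plan is to mirror the proof of Theorem \ref{limit-thm} in the semidirect product setting. The central observation is that the Fokker-Planck operator in \eqref{FP-SM} can be rewritten as a Lie-Poisson-plus-double-Lie-Poisson bracket with respect to the natural semidirect product bracket on $\mathfrak g^* \circledS V^*$. Defining
\[
\{F,G\}_s(\mu,q) := \left\langle (\mu,q)\,,\, \mathrm{ad}_{\left(\frac{\partial F}{\partial \mu},\frac{\partial F}{\partial q}\right)}\left(\frac{\partial G}{\partial \mu},\frac{\partial G}{\partial q}\right) \right\rangle_{\kappa_0},
\]
with $\kappa_0$ as in \eqref{kappa0} and the semidirect adjoint action as in \eqref{SD-prod}, the operator $L$ in \eqref{FP-SM} should take the form
\[
L = \{h,\cdot\}_s - \sum_i \{\Phi_i, \{\Phi_i,\cdot\}_s\}_s,
\]
in direct analogy with \eqref{HamFP}.

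First I would verify this reformulation by substituting the explicit formulas \eqref{SD-prod} for the semidirect coadjoint action into \eqref{FP-SM} and matching the resulting terms with the expanded brackets $\{h,\cdot\}_s$ and $\{\Phi_i,\{\Phi_i,\cdot\}_s\}_s$, using the definitions $\xi = \partial h/\partial \mu$, $r = -\partial h/\partial q$ from \eqref{hr-defs} and $\sigma_i = -\partial \Phi_i/\partial \mu$, $\eta_i = -\partial \Phi_i/\partial q$. This is a direct symbolic computation parallel to the one already performed to pass from \eqref{FP-gen} to \eqref{HamFP} on the single factor $\mathfrak g^*$, but now carried out on the pair $(\mu,q)$.

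Next I would apply verbatim the functional-analytic argument of Theorem \ref{limit-thm}: for an operator of the form $B + \sum_i A_i^2$ with $A_i = \{\Phi_i,\cdot\}_s$ and $B = \{h,\cdot\}_s$, one has $\ker(L^*) = \ker(B) \cap \bigcap_i \ker(A_i)$. Therefore any invariant density $\mathbb P_\infty$ must Poisson-commute with $h$ and with each $\Phi_i$ under $\{\cdot,\cdot\}_s$, which forces $\mathbb P_\infty$ to be a Casimir of $\{\cdot,\cdot\}_s$. Casimirs are by definition constant on the symplectic leaves of $\{\cdot,\cdot\}_s$, which are precisely the coadjoint orbits of $G \circledS V$ in $\mathfrak g^* \circledS V^*$. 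Since each term on the right-hand side of \eqref{SDE-SM} is built from the semidirect coadjoint action, the trajectories remain on the coadjoint orbit through their initial condition, and hence $\mathbb P_\infty$ is constant on that orbit, as claimed.

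The main technical obstacle is the first step: the coadjoint action $\mathrm{ad}^*_{(\xi,r)}(\mu,q) = (\mathrm{ad}^*_\xi\mu + \mathrm{ad}^*_q r,\, \mathrm{ad}^*_\xi q)$ produces cross terms coupling the $\mathfrak g^*$ and $V^*$ components, and one must track their distributive behaviour carefully to confirm that \eqref{FP-SM} is precisely a bracket minus a double bracket in $\{\cdot,\cdot\}_s$, with no residual terms. A subsidiary concern is that semidirect coadjoint orbits in examples such as the heavy top are non-compact; as in the remarks following Theorem \ref{limit-thm}, the conclusion must then be understood in the weak sense via marginals on compact subspaces, rather than as the existence of a normalisable constant density on the entire orbit.
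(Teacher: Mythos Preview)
Your proposal is correct and follows essentially the same approach as the paper: the paper presents this as a direct corollary of Theorem \ref{limit-thm} without separate proof, and your plan of rewriting \eqref{FP-SM} in the double Lie--Poisson form $\{h,\cdot\}_s - \sum_i \{\Phi_i,\{\Phi_i,\cdot\}_s\}_s$ for the semidirect product bracket and then invoking the kernel argument $\ker L = \ker B \cap \bigcap_i \ker A_i$ is exactly the intended extension. Your remarks on the cross terms and on non-compactness are apt and anticipate precisely the issues the paper addresses immediately afterward via the marginal distributions in Theorem \ref{SD-invariant}.
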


As mentioned earlier, the coadjoint orbit of this system is not compact, even if it had been compact for the Lie algebra $\mathfrak g$. 
Nevertheless, we can study the marginal distributions 
\begin{align}
	\mathbb P^1(\mu) &:= \int \mathbb P(\mu,q)dq\qquad \mathrm{ and}\label{P1}\\
	\mathbb P^2(q) &:= \int \mathbb P(\mu,q)d\mu\, , \label{P2}
\end{align}
which of course extend to invariant marginal distributions $\mathbb P^1_\infty$ and $\mathbb P^2_\infty$.
With these marginal distributions, we can get more information about the invariant distribution of the semidirect product Lie-Poisson  Fokker-Planck equation \eqref{FP-SM}, as summarized in the next theorem.
\begin{theorem}\label{SD-invariant}
	For a semi-simple Lie algebra $\mathfrak g$ and $V= \overline{\mathfrak g}$, the marginal invariant distributions defined in \eqref{P1} and \eqref{P2} of the Fokker-Planck equation \eqref{FP-SM}, with $\eta_i=0$, for all $i$, have the following forms.  
	\begin{enumerate}
		\item The invariant distribution $\mathbb P^2_\infty(q)$ is constant on the $q$-dependent subspace of the coadjoint orbit. 
			If the Lie algebra $\mathfrak g$ is non-compact, the constant is zero. 
		\item The invariant distribution $\mathbb P^1_\infty(\mu)$ restricted to $\kappa(\mu,\mu)$ is constant. 
		\item If $\mathfrak g$ is compact, the invariant distribution $\mathbb P^1_\infty(\mu)$  is linearly bounded in time in the direction perpendicular to $\kappa(\mu,\mu)$.
	\end{enumerate}
\end{theorem}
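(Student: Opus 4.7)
My plan rests on the corollary of Theorem \ref{limit-thm}, which forces $\mathbb P_\infty(\mu,q)$ to be the uniform measure on the semidirect coadjoint orbit $\mathcal O\subset\mathfrak g\circledS\overline{\mathfrak g}$. The paper already notes that $C_1(\mu,q)=\kappa(q,q)$ and $C_2(\mu,q)=\kappa(\mu,q)$ are Casimirs (from the bi-invariance of $\kappa_s$ and $\kappa$); for semisimple $\mathfrak g$ these, together with the Casimirs of $\mathfrak g$ pulled back by $(\mu,q)\mapsto\mu$, characterise the generic orbit. The task then reduces to computing the two fiber-marginals of the uniform measure on $\mathcal O$, handled by $G$-equivariance for items (1)--(2) and by an It\^o computation for item (3).

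For item (1), the coadjoint action formula \eqref{SD-prod} specialised to $(g,0)\in G\circledS V$ gives $(\mu,q)\mapsto(\mathrm{Ad}^*_g\mu,\mathrm{Ad}^*_gq)$, so the projection $\pi_V(\mathcal O)$ is exactly the $G$-orbit $\mathrm{Ad}_G\cdot q_0\subset\overline{\mathfrak g}$ sitting inside $\{\kappa(q,q)=\kappa(q_0,q_0)\}$. The change of variables $\mu\mapsto\mathrm{Ad}^*_g\mu$ in $\mathbb P^2_\infty(q)=\int\mathbb P_\infty(\mu,q)\,d\mu$ preserves Lebesgue measure (via the Jacobian $|\det\mathrm{Ad}_g|=1$ for semisimple $\mathfrak g$) and the uniform measure on $\mathcal O$, giving $\mathbb P^2_\infty(\mathrm{Ad}^*_gq)=\mathbb P^2_\infty(q)$. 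In the non-compact case this adjoint orbit is unbounded, so the only uniform density consistent with $\int\mathbb P^2_\infty=1$ is zero. Item (2) follows from the mirror argument applied to the other projection: for $\mu_1,\mu_2$ in the same adjoint orbit of $\mathfrak g^*$, a $g\in G$ with $\mathrm{Ad}^*_g\mu_1=\mu_2$ carries the fiber above $\mu_1$ bijectively onto the fiber above $\mu_2$, so $\mathbb P^1_\infty$ is constant on each adjoint orbit of $\mathfrak g$. In the compact case each such orbit sits inside a level set of the quadratic Casimir $\kappa(\mu,\mu)$, which is the precise statement of (2).

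For item (3), I would apply It\^o's formula to $f(\mu)=\tfrac12\kappa(\mu,\mu)$ using the It\^o form of the semidirect stochastic system, obtained by adding the $\mathrm{ad}^*_q r$ drift from \eqref{SDE-SM-system} to \eqref{SEP-simple-ito-mu}. Bi-invariance forces $\kappa(\mu,\mathrm{ad}^*_\xi\mu)=0$, and the same property makes the It\^o correction $\tfrac12\sum_i\kappa(\mu,\mathrm{ad}^*_{\sigma_i}\mathrm{ad}^*_{\sigma_i}\mu)$ cancel exactly against the quadratic-variation contribution $\tfrac12\sum_i\|\mathrm{ad}^*_{\sigma_i}\mu\|^2$. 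The only surviving drift is the semidirect coupling $\kappa(\mu,[r,q])$ with $r=-\partial_q h$. In the compact case, item (1) confines $q$ to a compact adjoint orbit on which $r$ is bounded for any smooth $h$, so $\tfrac{d}{dt}\mathbb E[\kappa(\mu,\mu)]\le C$ and therefore $\mathbb E[\kappa(\mu,\mu)](t)\le C_0+Ct$. This gives the claimed linear-in-$t$ bound on the spread of $\mathbb P^1(\mu,t)$ in the direction transverse to the level sets of $\kappa(\mu,\mu)$. The main obstacle is the orbit identification in steps (1)--(2): one must verify that at least generically the listed Casimirs cut out the coadjoint orbit exactly, and carefully track Jacobians when pushing the fiber measure around by $\mathrm{Ad}^*_g$; the It\^o computation of step (3) is then routine once the bi-invariance identities are in hand.
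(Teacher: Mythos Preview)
Your approach to items (1) and (2) is correct but genuinely different from the paper's. The paper integrates the Fokker--Planck operator \eqref{FP-SM} over one variable, uses that the coadjoint vector fields are divergence-free to discard boundary terms, and then reapplies the kernel argument of Theorem~\ref{limit-thm} to the resulting reduced operator. You instead invoke the Corollary once to get uniformity of $\mathbb P_\infty$ on the full orbit $\mathcal O$, and then push the marginals around by the $G$-action $(g,0)$, using $|\det\mathrm{Ad}_g|=1$. Your route is more measure-theoretic and arguably cleaner; the paper's route stays at the PDE level and has the advantage of not needing to know in advance that $\mathbb P_\infty$ is exactly uniform---it would also handle situations where only the double-bracket structure of $L$ is available. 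Both arrive at the same conclusion: constancy of the marginals along $G$-adjoint orbits, hence inside level sets of $\kappa$.

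For item (3) your strategy coincides with the paper's---differentiate $\|\mu\|^2=-\kappa(\mu,\mu)$ and observe that all noise contributions vanish by bi-invariance---but there is a small gap in your bound. The surviving drift $\kappa(\mu,[r,q])$ is linear in $\mu$, so you cannot conclude $\tfrac{d}{dt}\mathbb E[\|\mu\|^2]\le C$ with $C$ independent of $\mu$; you only get $\tfrac{d}{dt}\mathbb E[\|\mu\|^2]\le 2\|r\|\,\|q\|\,\mathbb E[\|\mu\|]$. The paper sidesteps this by working pathwise (the Stratonovich noise terms vanish identically, so $\tfrac{d}{dt}\tfrac12\|\mu\|^2=\langle\mathrm{ad}_rq,\mu\rangle\le\|r\|\,\|q\|\,\|\mu\|$ holds for each realisation), then dividing by $\|\mu\|$ and integrating to obtain the sharper pathwise bound $\|\mu(t)\|\le\|\mu(0)\|+\alpha t$. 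Your expectation argument is repairable via Cauchy--Schwarz, but the pathwise version is both simpler and stronger.
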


\begin{proof}
	We will compute the invariant marginal distributions separately, but first recall that the invariant distribution $\mathbb P(\mu,q)$ is constant on the Casimir level sets given by the initial conditions.

$(1)$
By integrating the Fokker-Planck equation \eqref{FP-SM} over $\mu$, one obtains 
\begin{align}
	L\mathbb P^2(q) &=  \int \left\langle  \mathrm{ad}^*_\xi q  ,\frac{\partial \mathbb P(\mu,q)}{\partial q}\right \rangle d\mu - \left\langle  \mathrm{ad}^*_{ \sigma_i} q ,\frac{\partial }{\partial q}
	\left\langle   \mathrm{ad}^*_{\sigma_i} q ,\frac{\partial \mathbb P^2(q)}{\partial q}\right \rangle  \right \rangle,
\label{FP-marginSDP}
\end{align}
where we have used the property that the coadjoint action is divergence-free (because of the anti-symmetry of the adjoint action, when identified with the coadjoint action via the Killing form) and have recalled that the Lie algebra is either compact, or $\mathbb P(\mu,q)= 0$ for the boundary conditions. 

Only the advection term remains in \eqref{FP-marginSDP}, as $\xi=\frac{\partial h}{\partial \mu}$ depends on $\mu$. 
Nevertheless, an argument similar to that for the proof of Theorem \ref{limit-thm} may be applied here to give the result of constant marginal distribution on the $q$ dependent part of the coadjoint orbits.
Again, if the Lie algebra is non-compact, then the probability density $\mathbb P^2_\infty(q)$ must vanish because of the normalisation. 

$(2)$
We first integrate the Fokker-Planck equation \eqref{FP-SM} with respect to the $q$ variable to find
\begin{align}
	L\mathbb P^1(\mu) &= \left \langle \mathrm{ad}^*_\xi \mu , \frac{\partial \mathbb P^1}{\partial \mu} \right \rangle 
	-  \sum_i\left \langle \mathrm{ad}^*_{\sigma_i} \mu  , \frac{\partial}{\partial \mu} \left \langle \mathrm{ad}^*_{\sigma_i} \mu  , \frac{\partial \mathbb P^1}{\partial \mu} \right \rangle \right \rangle,
\end{align}
where we have again used that the coadjoint action is divergence free, the same boundary conditions and the fact that $\langle \mathrm{ad}_q \xi, \frac{\partial \mathbb P}{\partial \mu}\rangle = 0,\: \forall \xi$ since $\frac{\partial \mathbb P}{\partial \mu}$ is aligned with $q$. 
Indeed, $\mathbb P$ is a function of the Casimirs, and thus is a function of $\kappa_s((\mu,q),(\mu,q))$. 
This fact prevents us from directly invoking Theorem \ref{limit-thm} as we would find that $\mathbb P^1$ is indeed constant on $\kappa(\mu,\mu)$, but $\mu$ does not have an invariant norm. 
Nevertheless, we can still use this theorem by restricting  $\mathbb P^1$ to the sphere $\kappa(\mu,\mu)$, or equivalently simply considering polar coordinates for $\mu$ and discarding the radial coordinate. 
In this case, we can invoke Theorem \ref{limit-thm} and obtain the result of a constant marginal distribution $\mathbb P^1_\infty$ projected on the coadjoint orbit of the Lie algebra $\mathfrak g$ alone. 

$(3)$
We compute the time derivative of the quantity $\|\mu\|^2:= -\,\kappa(\mu,\mu)$, which is positive definite and thus defines a norm, to get an upper estimate of the form
\begin{align*}
	\frac{d}{dt}\frac12 \|\mu\|^2 &= \langle \mu, \dot \mu\rangle = \langle \mathrm{ad}_r q, \mu\rangle \leq \|r\| \|q\|\|\mu\|.
\end{align*}
Then, because $\|q\|=\sqrt{-\kappa(q,q)}$ is constant, and provided that $r$ is bounded, we can integrate to find 
\begin{align}
	\|\mu(t)\| \leq \|\mu(0)\| +\alpha t, 
\end{align}
where $\alpha$ is a constant depending on the Lie algebra and the Hamiltonian.
\end{proof}

\begin{remark}[On ergodicity]
	An important question about any given dynamical system is whether its solution is ergodic. 
	This question needs some clarification for the systems considered here. 
	First, notice that the deterministic systems are not ergodic, as they are Hamiltonian systems with extra conserved quantities given by the Casimirs. 
	Some of the systems are even completely integrable, a property which is usually understood as the opposite of ergodicity. 
	Now, if the noise is switched on in the case where the $\sigma_i$ span the entire Lie algebra, then there is a constant invariant measure on the level set of the Casimir given by the initial conditions. 
	This means that we have the ergodicity property on the coadjoint orbits but not on the full Euclidian space in which the coadjoint orbits are embedded. 
	The ergodicity must then only be defined with respect to the coadjoint orbit, otherwise the system will not be seen to be ergodic. 
	Finally, the cases where the $\sigma_i$ do not span the Lie algebra must be treated individually, depending on the system in hand. 
	For example with the rigid body in section \ref{RB}, having two independent non-trivial $\sigma_i$ is sufficient for ergodicity, while having only one sigma will make the system integrable, and thus non-ergodic. 
\end{remark}

\paragraph{\bf Summary.} This section has reviewed the framework for the study of noise in dynamical systems defined on coadjoint orbits, and has illustrated how noise may be included in these systems, so as to preserve the deterministic coadjoint orbits.
This preservation property is seen clearly in the Clebsch formulation, because the deterministic and stochastic systems share the same momentum map, whose level sets define the coadjoint orbits. The systems we have considered are the Euler-Poincar\'e equations on semi-simple finite dimensional Lie groups and the semidirect product structures which appear when the advected quantities are introduced in the underlying vector space of the Lie algebra of the Lie group. 
These structures are not the most general. However, their study has allowed us to use the properties of the natural pairing given by the Killing form to prove a few illustrative results in a simple and transparent way. 
In particular, we showed that the invariant measure of the Fokker-Planck equation, written in Lie-Poisson form, is constant on the coadjoint orbits. 
In the semidirect product setting, a bit more care was needed to obtain similar results for the marginal distributions, as the coadjoint orbits are not compact in this case. 
We will illustrate our approach with the two basic examples of the rigid body and heavy top in sections \ref{RB} and \ref{HT}, where more will be said about these systems, and in particular about their integrability. 

\section{Dissipation and invariant measures}\label{dissipation}
In the previous section we described a structure preserving stochastic deformation of mechanical systems with symmetries. 
The preserved structure is the coadjoint orbit of the deterministic system. Namely, the stochastic process still belongs to one of these orbits, characterised by the initial conditions of the system. 
This preservation is reflected in the strict conservation of particular integrals of motion, called Casimirs. 
In general, these are the only conserved quantities of our stochastic processes. 
Indeed, the energy is not conserved, apart from very particular choices of the energy and the stochastic potentials as we will see for some examples. 
The energy is not strictly decaying either, but is subject to random fluctuations with its own stochastic process coupled to the stochastic process of $\mu$. 
The complexity of the energy evolution has hindered us from studying it in full generality in the previous sections. 
In the present section, however, we will investigate the energy behaviour for particular mechanical examples subject to dissipation and random fluctuations. 
The type of energy dissipation that we will introduce in Section \ref{dissipation-section} also preserves the coadjoint orbits. Consequently, the dissipation is compatible with our stochastic deformation. 
The main outcome after introducing this dissipation is the emergence of a balance between noise and dissipation which will make the invariant measure of the Fokker-Planck equation energy dependent, as we will see in Section \ref{FP-diss-section} and in the proof of existence of random attractors in Section \ref{RA-section}. 

\subsection{Double bracket dissipation}\label{dissipation-section}

To augment the stochastic processes introduced in the previous section, we will add a type of dissipation for which the solutions of the stochastic process will still lie on the deterministic coadjoint orbit.
For this purpose, we will use \emph{double bracket dissipation}, which was studied in detail in \cite{bloch1996euler} and was generalised recently in \cite{gaybalmaz2013selective, gaybalmaz2014geometric}.
We will follow the latter works to include an energy dissipation which preserves the Casimir functions. 
We will not review this theory in detail here. Instead, we refer the reader to \cite{gaybalmaz2013selective} for a detailed discussion of Euler-Poincar\'e selective decay dissipation and \cite{gaybalmaz2014geometric} for the semidirect product extension. 

For the stochastic process \eqref{SEP-simple}, the dissipative stochastic Euler-Poincar\'e equation written in Hamiltonian form is 
\begin{align}		
	d\mu &+ \mathrm{ad}^*_\frac{\partial h}{\partial \mu} \mu\, dt 
	+ \theta\, \mathrm{ad}^*_\frac{\partial C}{\partial \mu} \left [ \frac{\partial C}{\partial \mu}, \frac{\partial h}{\partial \mu} \right ]^\flat dt + \sum_i\mathrm{ad}^*_{\sigma_i} \mu \circ dW_t^i = 0 \,,
	\label{SEP-Diss}
\end{align}
where $\theta>0$ parametrises the rate of energy dissipation and $C$ is a chosen Casimir of the coadjoint orbit. 
For convenience, we are using the isomorphism $\flat:\mathfrak g\to \mathfrak g^*$ defined via the Killing form of $\mathfrak g$.
The converse isomorphism will be denoted $\sharp:\mathfrak g^*\to \mathfrak g$. 
The corresponding generalisation of selective decay for the semidirect product stochastic process \eqref{SDE-SM}, following \cite{gaybalmaz2014geometric}, is given by
\begin{align}
	\begin{split}
	d(\mu,q) + \mathrm{ad}^*_{(\xi,r)} ( \mu,q)\,  dt 
	&+ \theta\, \mathrm{ad}^*_{\left ( \frac{\partial C}{\partial \mu},\frac{\partial C}{\partial q}\right )} \left [ \left ( \frac{\partial C}{\partial \mu},\frac{\partial C}{\partial q}\right ) ,  ( \xi,r) \right ]^\flat dt \\
	&\hspace{3cm} + \sum_i\mathrm{ad}^*_{(\sigma_i,\eta_i)} (\mu,q) \circ dW_t^i = 0,
	\end{split}
	\label{SD-SD-eqns}
\end{align}
where $\xi = \frac{\partial h}{\partial \mu}$, and the quantities $h$ and $r$ are defined in equation \eqref{hr-defs}.
Equation \eqref{SD-SD-eqns} may be written equivalently as a system of equations, by using the actions given in \eqref{SD-prod}. Namely, 
\begin{align}
	\begin{split}
		d\mu + (\mathrm{ad}^*_\xi  \mu +\mathrm{ad}^*_r q)\, dt 
		+ \theta\, \mathrm{ad}^*_{\frac{\partial C}{\partial \mu}} \left [  \frac{\partial C}{\partial \mu}, \xi \right ]^\flat &dt 
		\,+\, \theta\,  \mathrm{ad}^*_\frac{\partial C}{\partial q} \left (\mathrm{ad}_\frac{\partial C}{\partial \mu} r + \mathrm{ad}_\frac{\partial C}{\partial q} \xi  \right )^\flat  dt \\
	 + & \sum_i \left (\mathrm{ad}^*_{\sigma_i}\mu +\mathrm{ad}^*_{\eta_i}q\right ) \circ dW_t^i = 0,\\
	dq + \mathrm{ad}^*_\xi q\, dt + \theta\, \mathrm{ad}^*_\frac{\partial C}{\partial \mu}   \left ( \mathrm{ad}_\frac{\partial C}{\partial \mu} r- \mathrm{ad}_\frac{\partial C}{\partial q} \xi  \right )^\flat dt  	
	&\,+ \sum_i \mathrm{ad}^*_{\sigma_i}q  \circ dW_t^i = 0\,.
	\end{split}
	\label{SP-SD}
\end{align}

Recall for the deterministic equations that the energy decays for $\theta>0$ as
\begin{align}
\frac{d}{dt}h(\mu,q) = - \,\theta \left \| \mathrm{ad}_\frac{\partial C}{\partial \mu}\xi \right \|^2 - \theta \, 	\left \| \mathrm{ad}_\frac{\partial C}{\partial \mu} r+ \mathrm{ad}_\frac{\partial C}{\partial q}  \xi \right \|^2,
	\label{SD-diss-SD}
\end{align}
where the second term is present only in the semidirect product setting \cite{gaybalmaz2014geometric}.

\begin{remark}[Choice of Casimir]
The selective decay approach preserves the entire coadjoint orbit, and the speed of decay depends upon which invariant function $C$ one uses in implementing it. 
Indeed, either the first or second term of \eqref{SD-diss-SD} can vanish depending on the choice of Casimir. 
We refer to the heavy top example in Section \ref{HT} for more details. 
\end{remark}

\begin{remark}[Variational principle and reducion]
	The reader may have noticed already that although we introduced noise via variational principles, the dissipation is added as an extra term in the equations of motions. 
	In fact, it also fits a variational principle, but as it is a dissipative force, it requires another type of variational principle, the so-called Lagrange-d'Alembert principle. Here, we only refer to \cite{gaybalmaz2014geometric} for the exact formulation of the variational principle in this context. 
	If one goes through the computation of the variations in the variational principle with noise, as described in remark \ref{reduction-noise}, then an extra noise term will appear in \eqref{SEP-Diss}, given by 
	\begin{align*}
		\sum _i \theta\, \mathrm{ad}^*_\frac{\partial C}{\partial \mu} \left [ \frac{\partial C}{\partial \mu}, \frac{\partial \Phi_i}{\partial \mu} \right ] ^\flat \circ dW_t^i.
	\end{align*}
	This term would be admissible, since it preserves the coadjoint orbit. However, we do not include it here. Instead, we leave it for treatment elsewhere, as it complicates the calculations to follow without significantly affecting the solution behaviour; since it is proportional to $\theta\sigma^2$, and $\theta$ and $\sigma^2$ are taken as being small compared to the original dynamics, so that the noise and dissipation are viewed as perturbations. 
	The product of the two factors therefore makes this term negligible in our study here.  
\end{remark}

Asymptotically in time, $t\to\infty$, the deterministic equations with selective decay will tend toward a state which is compatible with the state of minimal energy, as shown in \cite{gaybalmaz2014geometric}. 
However, the presence of noise will balance the dissipation due to selective decay and prevent the system from reaching this deterministic equilibrium position. 
This feature will imply a non-constant invariant distribution of the corresponding Fokker-Planck solution to be studied in the next section, as well as the existence of \emph{random attractors}, for which we refer to \cite{kondrashov2015data,schenk1998random} for background information.  

\subsection{The Fokker-Planck equation and invariant distributions}\label{FP-diss-section}
In order to study the balance between multiplicative noise and nonlinear dissipation, we compute the Fokker-Planck equation associated to the process \eqref{SEP-Diss} or, equivalently,  \eqref{SP-SD}, and its invariant solutions. 

The Fokker-Planck equation for the Euler-Poincar\'e stochastic process \eqref{SEP-Diss} is modified by the double bracket dissipative term, to read as,
\begin{align}
	\frac{d}{dt} \mathbb P(\mu) + \{h,\mathbb P\}  +\theta\left \langle\left [\frac{\partial \mathbb P}{\partial \mu}, \frac{\partial C}{\partial \mu}\right], \left [ \frac{\partial h}{\partial \mu}, \frac{\partial C}{\partial \mu}\right]\right \rangle - \frac12 \sum_i  \{\Phi_i,\{\Phi_i,\mathbb P\}\}=0.
	\label{FP-Diss}
\end{align}
The invariant distribution of this Fokker-Planck equation is no longer a constant on the coadjoint orbits. Instead, it now depends on the energy, as summarized in the following theorem. 
\begin{theorem}\label{FP-diss-thm}
	Let the noise amplitude be of the form $\sigma_i= \sigma e_i$ for an arbitrary $\sigma \in \mathbb R$, where the $e_i$'s span the underlying vector space of the dual Lie algebra $\mathfrak g^*\cong \mathfrak g$. 
	The invariant distribution of the Fokker-Planck equation \eqref{FP-Diss} associated to \eqref{SEP-Diss} with Casimir $C= \kappa(\mu,\mu)$ is given by 
	\begin{align}
		\mathbb P_\infty(\mu) = Z^{-1} e^{-\frac{2\theta}{\sigma^2} h(\mu)},  
	\label{MaxwellianDist}	
	\end{align}
	where $Z$ is the normalisation constant that enforces $\int \mathbb P_\infty(\mu) d\mu= 1$. 
\end{theorem}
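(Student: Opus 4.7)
The plan is to verify that $\mathbb P_\infty = Z^{-1}e^{-\beta h}$ with $\beta = 2\theta/\sigma^2$ solves the stationary form of \eqref{FP-Diss} by direct substitution, using only the fact that $\mathbb P_\infty$ is a function of $h$ alone. The chain rule gives $\partial \mathbb P_\infty/\partial \mu = -\beta\,\mathbb P_\infty\,\xi$, where $\xi := \partial h/\partial\mu$, and the Hamiltonian transport term drops out immediately because $\{h,F(h)\} = F'(h)\,\kappa(\mu,[\xi,\xi]) = 0$. This reduces the problem to showing that the dissipation and diffusion terms exactly cancel for the given $\beta$.

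Next I would compute the dissipation contribution. Taking the quadratic Casimir so that $\partial C/\partial\mu \propto \mu$, both inner brackets $[\partial \mathbb P_\infty/\partial\mu,\partial C/\partial\mu]$ and $[\partial h/\partial\mu,\partial C/\partial\mu]$ collapse to scalar multiples of $[\xi,\mu]$, so the dissipation term of \eqref{FP-Diss} becomes proportional to $\beta\theta\,\mathbb P_\infty\,\kappa([\xi,\mu],[\xi,\mu])$. For the diffusion term, I would use the derivation property of the Poisson bracket twice to obtain
\[
\{\Phi_i,\{\Phi_i,F(h)\}\} = F''(h)\,\{\Phi_i,h\}^2 + F'(h)\,\{\Phi_i,\{\Phi_i,h\}\},
\]
which produces a piece quadratic in $\beta$ and a correction linear in $\beta$. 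With the prescribed $\sigma_i = \sigma e_i$ and $\{e_i\}$ orthonormal with respect to $-\kappa$, bi-invariance of the Killing form rewrites $\{\Phi_i,h\}$ as $\sigma\,\kappa(e_i,[\mu,\xi])$, and the Parseval-type identity $\sum_i\kappa(e_i,v)^2 = -\kappa(v,v)$ turns $\sum_i\{\Phi_i,h\}^2$ into $-\sigma^2\kappa([\xi,\mu],[\xi,\mu])$.

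Matching the dominant contributions, the coefficient of $\mathbb P_\infty\,\kappa([\xi,\mu],[\xi,\mu])$ in the stationary Fokker--Planck equation reads $-\beta\theta + \tfrac{\beta^2\sigma^2}{2}$, which vanishes precisely when $\beta = 2\theta/\sigma^2$; this fluctuation--dissipation balance is what pins down the exponent in \eqref{MaxwellianDist}. The main obstacle I expect is the residual term $\sum_i\{\Phi_i,\{\Phi_i,h\}\}$ arising from the first-order piece of the chain rule; the natural tool is to recognize $\sum_i \mathrm{ad}_{e_i}\mathrm{ad}_{e_i}$ as the quadratic Casimir in the adjoint representation of $\mathfrak g$, which acts as a scalar on each simple summand, and then to apply bi-invariance once more so that the surviving contribution is tangent to the coadjoint orbit on which \eqref{SEP-Diss} evolves, where $\kappa(\mu,\mu)$ is conserved, so that the leftover does not obstruct stationarity of the ansatz on the orbit fixed by the initial condition.
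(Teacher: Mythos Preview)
Your route differs from the paper's and runs into a genuine obstruction at the residual term. The paper never expands $\{\Phi_i,\{\Phi_i,F(h)\}\}$ via the chain rule. Instead it casts both the selective-decay contribution and the diffusion contribution as \emph{divergences} of the same vector field $V=\mathrm{ad}_{\partial C/\partial\mu}\bigl[\partial C/\partial\mu,\partial h/\partial\mu\bigr]$: the diffusion becomes $\tfrac{\sigma^2}{2}\,\mathbf d\bigl(f'(h)\,V\bigr)$ after using that each $\mathrm{ad}^*_{e_i}\mu$ is divergence-free and then collapsing $\sum_i e_i\,\kappa(e_i,\cdot)$ by completeness of the basis, while the selective-decay term is rewritten as $\theta\,\mathbf d\bigl(f(h)\,V\bigr)$. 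The two divergences cancel identically once $\theta f+\tfrac{\sigma^2}{2}f'=0$, which fixes $f=e^{-2\theta h/\sigma^2}$ with nothing left over. The point of this packaging is precisely that it never separates the $F''$ and $F'$ pieces, so no residual ever appears.

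Your proposed handling of the residual does not work. The sum $\sum_i\{\Phi_i,\{\Phi_i,h\}\}$ is not killed by recognising $\sum_i\mathrm{ad}_{e_i}\mathrm{ad}_{e_i}$ as the adjoint Casimir, because the inner bracket already carries the $\mu$-dependence of $\xi=\partial h/\partial\mu$. Concretely, for the $SO(3)$ rigid body with $h=\tfrac12\boldsymbol\Pi\cdot\mathbb I^{-1}\boldsymbol\Pi$ one computes $\sum_i\{\Phi_i,\{\Phi_i,h\}\}=\sigma^2\bigl(\mathrm{Tr}(\mathbb I^{-1})\,|\boldsymbol\Pi|^2-6h\bigr)$, which varies along the momentum sphere and is not ``tangent to the orbit'' in any sense that would make it disappear. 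What actually compensates this term is the divergence of the double-bracket drift itself: the vector field $\mu\mapsto\mathrm{ad}_\mu\mathrm{ad}_\mu\xi$ is \emph{not} divergence-free (for $SO(3)$ its divergence equals $6h-\mathrm{Tr}(\mathbb I^{-1})\,|\boldsymbol\Pi|^2$), and this contribution---present once the Fokker--Planck operator is written in its adjoint, divergence form---pairs with your residual at exactly the same value $\beta=2\theta/\sigma^2$. The paper's divergence formulation builds this cancellation in from the start; your pointwise evaluation of the selective-decay term discards it, leaving a leftover that the Casimir-element argument cannot remove.
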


\begin{proof}
	The invariant distribution is given by solving $\frac{d}{dt} \mathbb P_\infty(\mu)= 0$. From the structure of the Fokker-Planck equation in double bracket form \eqref{FP-Diss}, the advection term must vanish independently of the other terms. (See the argument of Theorem \ref{limit-thm}.) 
	We therefore use the Ansatz $\mathbb P_\infty(\mu) = f(h(\mu))$, where the function $f$ is to be determined. 
Consequently, only the selective decay and the double bracket term still remain. 
The selective decay is first rewritten, using the bi-invariance property of the Killing form \eqref{bi-invariance}, as 
\begin{align*}
	\theta\left \langle\left [\frac{\partial \mathbb P}{\partial \mu}, \frac{\partial C}{\partial \mu}\right], \left [ \frac{\partial h}{\partial \mu}, \frac{\partial C}{\partial \mu}\right]\right \rangle &= 
\theta\left \langle\frac{\partial \mathbb P}{\partial \mu}, \mathrm{ad}_\frac{\partial C}{\partial \mu} \left [ \frac{\partial C}{\partial \mu}, \frac{\partial h}{\partial \mu}\right] \right \rangle \\
&= \theta\, \mathbf d\left( f(h) \mathrm{ad}_\frac{\partial C}{\partial \mu} \left [ \frac{\partial C}{\partial \mu}, \frac{\partial h}{\partial \mu}\right]\right ),
\end{align*}
where we have used the property that the coadjoint action for semi-simple Lie algebras is divergence-free. 
(Notice that the exterior derivative $\mathbf d$ is a divergence operation here.)
Since $\kappa(\mu,\mu)$ is a Casimir and $\mu^\sharp=\frac{\partial C}{\partial \mu}$, we can rewrite the double bracket due to the noise as 
\begin{align*}
	-\frac12 \sum_i  \{\Phi_i,\{\Phi_i,\mathbb P\}\} &= -\sigma^2\frac12 \sum_i\left \langle \mathrm{ad}^*_{e_i}\frac{\partial C}{\partial \mu}^\flat , \frac{\partial }{\partial \mu} \left \langle \mathrm{ad}^*_{e_i}\frac{\partial C}{\partial \mu}^\flat, \frac{\partial \mathbb P}{\partial \mu} \right \rangle\right \rangle\\
	\hbox{(From bi-invariance of $\kappa$)}\quad
	&= \sigma^2\frac12 \sum_i \mathbf d \left (f'(h)\mathrm{ad}_{e_i}\frac{\partial C}{\partial \mu} \left \langle \mathrm{ad}_\frac{\partial h}{\partial \mu}\frac{\partial C}{\partial \mu}, e_i \right \rangle\right)\\
	&= \sigma^2\frac12  \mathbf d \left (f'(h)\mathrm{ad}_{\mathrm{ad}_\frac{\partial h}{\partial \mu}\frac{\partial C}{\partial \mu}}\frac{\partial C}{\partial \mu}  \right).
\end{align*}
We have used the bi-invariance of the pairing to enforce the relation $\mathrm{ad}^\dagger_\xi\eta:=\mathrm{ad}^*_\xi \eta^\flat= - \mathrm{ad}_\xi\eta$. See for example \cite{varadarajan1984lie} for more details.
The result \eqref{MaxwellianDist} for the equilibrium distribution then follows by comparing the selective decay term with the double bracket term and noticing that the two terms will cancel, provided $f(x) = e^{-{2\theta\,x}/{\sigma^2}}$. 
\end{proof}

The Fokker-Planck equation with dissipation in the semidirect-product setting directly gives
\begin{align}
	\begin{split}
	\frac{d}{dt} \mathbb P(\mu,q) &+ \{h,\mathbb P\} - \frac12 \sum_i  \{\Phi_i,\{\Phi_i,\mathbb P\}\}\, +\\
	&+\theta\, \left \langle\left [ \left (\frac{\partial \mathbb P}{\partial \mu}, \frac{\partial \mathbb P}{\partial q}\right ), \left ( \frac{\partial C}{\partial \mu}, \frac{\partial C}{\partial q}\right ) \right], \left [ \left ( \frac{\partial h}{\partial \mu},\frac{\partial h}{\partial q}\right ),\left (  \frac{\partial C}{\partial \mu},\frac{\partial C}{\partial q}\right )\right]\right \rangle=0 \,.
	\end{split}
	\label{SD-SD-FP}
\end{align}
Consequently, for semidirect products, we have the analogue of the previous theorem, but for the marginal invariant distribution on the advected quantities. 
\begin{theorem}
	Provided the Hamiltonian is of the form $h(\mu,q)= K(\mu)+V(q)$ for two functions $K$ and $V$, the invariant marginal distribution $\mathbb P_\infty^2(q)$ with the selective decay from the Casimir $\kappa(\mu,q)$ is given by
	\begin{align}
		\mathbb P_\infty^2(q)= Z^{-1} e^{-\frac{2\theta}{\sigma^2} V(q)},
	\end{align}
	where $Z$ is the normalisation constant.
\end{theorem}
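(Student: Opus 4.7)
The strategy is to mirror the argument used for Theorem \ref{FP-diss-thm}, but in the semidirect product setting. Rather than attacking the marginal equation for $\mathbb P^2(q)$ directly (which couples to $\mu$ through the kinetic term), I would first construct an invariant joint density $\mathbb P_\infty(\mu,q)$ on $\mathfrak g^*\circledS V^*$ and then integrate out $\mu$. Motivated by the Gibbs-type solution obtained in Theorem \ref{FP-diss-thm}, the natural Ansatz is
\begin{align*}
\mathbb P_\infty(\mu,q) = Z^{-1} e^{-\frac{2\theta}{\sigma^2} h(\mu,q)}
= Z^{-1} e^{-\frac{2\theta}{\sigma^2}K(\mu)}\, e^{-\frac{2\theta}{\sigma^2}V(q)} .
\end{align*}
The plan is to verify that this annihilates the dissipative Fokker-Planck operator in \eqref{SD-SD-FP} and then marginalise over $\mu$.

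First I would dispose of the transport term. Since $\mathbb P_\infty$ is a function of $h$ alone, the Lie-Poisson bracket $\{h,\mathbb P_\infty\}$ vanishes identically, just as in the proof of Theorem \ref{FP-diss-thm}. The real content is in the cancellation between the selective decay term and the noise double bracket. Here the computation is the semidirect analogue of the one carried out in Theorem \ref{FP-diss-thm}, but using the Casimir $C=\kappa(\mu,q)$ of the extended Killing form $\kappa_s$ in \eqref{kappaS}. With the Ansatz, the gradient pair $\bigl(\partial_\mu \mathbb P,\partial_q\mathbb P\bigr)$ is proportional to $\bigl(\partial_\mu h,\partial_q h\bigr)$ with factor $-(2\theta/\sigma^2)\mathbb P_\infty$, so the selective decay term in \eqref{SD-SD-FP} reduces to
\begin{align*}
-\,\frac{2\theta^{2}}{\sigma^{2}}\,\mathbb P_\infty\, \bigl\|[(\partial_\mu h,\partial_q h),(\partial_\mu C,\partial_q C)]\bigr\|^{2}.
\end{align*}
Next I would use the bi-invariance of $\kappa_s$ and the Casimir property of $C=\kappa(\mu,q)$ to rewrite the noise contribution $-\tfrac12\sum_i\{\Phi_i,\{\Phi_i,\mathbb P_\infty\}\}$, in exactly the same manner as in the second half of the proof of Theorem \ref{FP-diss-thm}, so that the double bracket terms associated with the spanning noise $\sigma_i=\sigma e_i$ (and $\eta_i$, if present) collapse to the same scalar quantity with opposite sign, producing the required cancellation. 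This is the step I expect to be technically the most delicate: the Casimir $\kappa(\mu,q)$ has cross gradients $(q^{\sharp},\mu^{\sharp})$, so when one moves $\mathrm{ad}^*$ through the pairing using \eqref{SD-prod} and \eqref{bi-invariance}, both the $\mathfrak g$- and $V$-components must be bookkept carefully, and one has to verify that the noise basis is indexed to cover all the directions that appear in the double bracket expansion.

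Once $\mathbb P_\infty(\mu,q)$ is established as an invariant density, the conclusion is immediate. Integrating the product structure over $\mu$,
\begin{align*}
\mathbb P_\infty^{2}(q) \;=\; \int_{\mathfrak g^{*}}\mathbb P_\infty(\mu,q)\, d\mu
\;=\; e^{-\frac{2\theta}{\sigma^{2}}V(q)}\, Z^{-1}\!\!\int_{\mathfrak g^{*}} e^{-\frac{2\theta}{\sigma^{2}}K(\mu)}\, d\mu ,
\end{align*}
and the $\mu$-integral is a $q$-independent constant that is absorbed into a new normalisation $\widetilde Z$, yielding $\mathbb P_\infty^{2}(q)=\widetilde Z^{-1} e^{-(2\theta/\sigma^{2})V(q)}$ as claimed. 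Implicit here is a mild coercivity assumption on $K$ ensuring integrability of the $\mu$-Gaussian-like factor, which is harmless for the quadratic kinetic energies arising in our examples.
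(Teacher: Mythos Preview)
Your strategy differs from the paper's, and the step you flag as ``technically most delicate'' is in fact a genuine gap.

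The paper does \emph{not} establish that the full joint Gibbs density $Z^{-1}e^{-(2\theta/\sigma^{2})h(\mu,q)}$ is stationary for \eqref{SD-SD-FP}; immediately after this theorem the authors explicitly leave that as a conjecture. The obstruction to transplanting the proof of Theorem~\ref{FP-diss-thm} verbatim is precisely the one you half-noticed: in the Euler--Poincar\'e case the crucial identity is $\mathrm{ad}^{*}_{e_i}\mu=\mathrm{ad}^{*}_{e_i}(\partial C/\partial\mu)^{\flat}$, which holds because $C=\kappa(\mu,\mu)$ forces $(\partial C/\partial\mu)^{\flat}=\mu$. For $C=\kappa(\mu,q)$ one has $(\partial_{\mu}C,\partial_{q}C)=(q^{\sharp},\mu^{\sharp})$, so the noise vector field $\mathrm{ad}^{*}_{(\sigma_i,0)}(\mu,q)=(\mathrm{ad}^{*}_{\sigma_i}\mu,\mathrm{ad}^{*}_{\sigma_i}q)$ is \emph{not} equal to $\mathrm{ad}^{*}_{(\sigma_i,0)}(\partial_{\mu}C,\partial_{q}C)^{\flat}=(\mathrm{ad}^{*}_{\sigma_i}q,\mathrm{ad}^{*}_{\sigma_i}\mu)$; the components are swapped. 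The basis-summation trick that collapses $\sum_i e_i\langle e_i,\cdot\rangle$ therefore does not land on the same double bracket as the selective decay, and your asserted cancellation does not go through with $\eta_i=0$, which is the regime of the theorem. Adding $\eta_i$ to force the match would change the hypotheses.

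The paper avoids this entirely by working at the marginal level from the start. One integrates \eqref{SD-SD-FP} over $\mu$ \emph{before} inserting any Ansatz; the divergence-free property of the coadjoint action and the separated form $h=K(\mu)+V(q)$ kill the $\mu$-dependent terms, leaving a closed equation in $q$ alone. Only then does one posit $\mathbb P^{2}(q)=f(V(q))$, and the surviving selective-decay and noise contributions are each total divergences in $q$ of the form $\mathbf{d}\bigl(\mathrm{ad}^{*}_{\mathrm{ad}_{r}q}q\cdot(\ldots)\bigr)$, with coefficients $\theta f$ and $\tfrac12\sigma^{2}f'$ respectively. Matching these gives the exponential. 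This route never requires knowing the joint invariant density, and in particular never needs the integral $\int e^{-(2\theta/\sigma^{2})K(\mu)}d\mu$ over the non-compact $\mu$-fibres that your final step invokes.
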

\begin{proof}
The proof here is similar to the proof for Theorem \ref{FP-diss-thm}. Thus, we only show the main calculations. 	
First, the selective decay term is given explicitly, using the Casimir $\kappa(\mu,q)$, by
\begin{align*}
\theta\, \left \langle\mathrm{ad}_\frac{\partial \mathbb P}{\partial \mu} q , \mathrm{ad}_\xi q \right \rangle+  
\theta\, \left \langle \mathrm{ad}_\frac{\partial \mathbb P}{\partial \mu}\mu + \mathrm{ad}_\frac{\partial \mathbb P}{\partial q}q , 
\mathrm{ad}_\xi \mu+ \mathrm{ad}_rq  \right \rangle. 
\end{align*}
Integrating the selective decay term of \eqref{SD-SD-FP} in $\mu$ and assuming $\mathbb P^2(q)= f(V(q))$, gives 
\begin{align*}
	\theta\, \left \langle   \mathrm{ad}_\frac{\partial \mathbb P^2}{\partial q} q ,  \mathrm{ad}_r q \right \rangle &= - \theta \left \langle   \mathrm{ad}^*_{\mathrm{ad}_r q}  q,  \frac{\partial \mathbb P^2}{\partial q} \right \rangle = 	- \theta\mathbf d \left (  \mathrm{ad}^*_{\mathrm{ad}_r q}  q f  \right ),
\end{align*}
where we have used the bi-invariance property of $\kappa$ \eqref{bi-invariance}, as well as the divergence-free property of the coadjoint action. 
Then, after integration over $\mu$, the double bracket term becomes
\begin{align*}
	-\frac12 \sigma^2 \mathbf d \left (  \mathrm{ad}^*_{e_i} q  \left\langle  \mathrm{ad}^*_{e_i}q ,\frac{\partial \mathbb P^2}{\partial q}\right\rangle   \right) &=  \frac12 \sigma^2 \mathbf d \left (f'  \mathrm{ad}^*_{e_i}q \left\langle  \mathrm{ad}_q r,  e_i \right\rangle   \right) \\
&= - \frac12 \sigma^2 \mathbf d \left (f'  \mathrm{ad}^*_{\mathrm{ad}_rq  }q    \right),
\end{align*}
upon again using bi-invariance. 
Thus, the result follows, as $f$ must satisfy $\theta f =\frac12 \sigma^2 f'$.   
\end{proof}

In the Euler-Poincar\'e setting, the invariant distribution was concentrated around the positions of minimum energy, and here the advected quantity $q$ is concentrated around the position of minimal \emph{potential} energy. 
We conjecture that the complete invariant distribution is concentrated around the minimal energy region, as in the Euler-Poincar\'e setting. However, we will not investigate this conjecture here, as we will be mainly interested in the dynamics of the advected quantities. 
\begin{remark}[Gibbs measure]
	This calculation only uses the bi-invariance of the Killing form, which holds in general for semi-simple Lie algebras. Therefore, the same conclusion applies for other Lie algebras which admit a bi-invariant pairing. 
	In statistical physics, the invariant measure \eqref{MaxwellianDist} is often called a Gibbs measure. This association provides a natural identification of the quantity $\sigma^2/ 2k_B\theta$ with a Kelvin temperature $T$, where $k_B$ is the Boltzmann constant. 
	This notion of temperature arises via coupling the system with a heat bath, at temperature $T$. 
Such an open system in statistical physics is referred to as a canonical ensemble, whereas the system without dissipation is closed, and hence fits in the traditional category of micro-canonical ensembles. 
\end{remark}

\begin{remark}[On ergodicity]
	The dissipative stochastic systems are also ergodic on the level sets of the Casimirs determined by the initial conditions, whereas the dissipative systems without noise are not ergodic as they will rapidly converge to the minimum energy positions. 
\end{remark}

\begin{remark}[Time reversal]
	The results of this section also hold when evolving backward in time, using the change of variable $t\to -t$. Indeed, since the noise $dW$ is centred, only the dissipation will be affected by time reversal, and it will have the opposite effect, namely the system will tend toward the highest energy equilibrium position. 
\end{remark}

\section{Random attractors}
\label{RA-section}

We now turn to the study of the existence of random attractors (RAs) in our stochastic dissipative systems, in connection with the theory of random dynamical systems (RDS). 
The classic approach in studying the effect of stochastic forcing of nonlinear dynamical systems proceeds by integrating the system forward in time and performing averages, then studying the Fokker-Planck equation, as we have done up to now. 
Another approach studies random dynamical systems via the so-called \emph{pull-back method}.
We will not fully explore the theory of random dynamical systems and pull-back attractors here. Instead, we will only invoke the main results from the theory and refer the interested reader to \cite{crauel1994attractors,crauel1997random, arnold1995random,bonatti2006dynamics,kloeden2011nonautonomous} for in-depth accounts of these subjects.
In a nutshell, for a given fixed realisation of the noise, the average is taken over the initial conditions. 
The noise makes the system time-dependent; so the notion of an attractor should be defined in the pull-back sense, such that for large times the attractive set does not depend on time. 
That is, the pull-back attractor is defined by pulling back a given set of initial conditions from $t=0$ to $t\to -\infty$ and letting the system evolve to $t=0$. 
In the limit, the set obtained at $t=0$ is the pull-back attractor. 
In random dynamical systems theory, the pull-back attractor is usually called a random attractor, and if it is not singular, it may admit a particular type of measure, the Sinai-Ruelle-Bowen measure (SRB), which is also called a physical measure, see \cite{young2002srb}. 
We will denote the physical measure by $\mathbb{P}_\omega(\mu)$ for a given realisation of the noise $\omega$.
There is a fundamental relation between this SRB measure and the invariant measure of the Fokker-Planck equation which was first discovered in \cite{crauel1991markov} and later in a theorem of \cite{crauel1998additive}. This relation is informally given by 
\begin{align}
	\int_\Omega \mathbb{P}_\omega(\mu) d\omega = \mathbb{P}_{\infty}(\mu),  
\end{align}
for the probability space $\Omega$. 
Here we are referring to probability densities, and the SRB measure can be seen as the invariant measure most compatible with volume, although volume in phase space is not preserved, because of dissipation. For more explanation, see \cite{young2002srb}.

\begin{remark}[Periodic kicking]
	We are only considering here the interaction of noise and dissipation. 
	However, if the noise were replaced by a simpler deterministic forcing, similar results would emerge. 
	In particular, periodic forcing or kicking of dissipative dynamical systems has been studied in great detail in numerous works, e.g., in \cite{lin2010dynamics,lu2013strange}. 
	In section \ref{kicking}, we will implement periodic kicking and damping in the rigid body, and will numerically demonstrate the existence of non-singular attractors and chaos. 
	We have left deeper theoretical studies of these systems for future investigations.
\end{remark}

\subsection{Existence of attractors}
We first determine that the stochastic processes \eqref{SEP-Diss} and \eqref{SP-SD} do indeed admit random attractors, provided the top Lyapunov exponent is positive. See \cite{kondrashov2015data,schenk1998random} and references therein for more details about this type of approach. Then we will estimate the value of the top Lyapunov exponent using numerical simulations for the rigid body in section \ref{RB}.

\begin{theorem}\label{RA-EP-thm}
	The stochastic process \eqref{SEP-Diss} admits a random attractor, for every Lie group $G$.  
\end{theorem}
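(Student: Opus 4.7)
The plan is to verify the two hypotheses of the Crauel--Flandoli theorem on the existence of random attractors, namely (a) that the SDE \eqref{SEP-Diss} generates a continuous random dynamical system (RDS), and (b) that there exists a compact random absorbing set.

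First I would check (a). Because $h$ and $C$ are smooth functions on the finite-dimensional space $\mathfrak g^*$ and the coefficients in \eqref{SEP-Diss} are polynomial in $\mu$ (built from $\mathrm{ad}^*$), the classical theory of stochastic flows (Kunita) produces a measurable cocycle $\varphi(t,\omega,\mu_0)$ over the canonical metric dynamical system of Wiener shifts on $\Omega$. Local existence is immediate; global existence in time follows from the a priori bound on $\|\mu\|$ furnished by the coadjoint-orbit confinement (see next step).

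Next I would establish the crucial confinement property: for every initial condition $\mu_0$, the whole trajectory of \eqref{SEP-Diss} remains on the coadjoint orbit $\mathcal O_{\mu_0}$. The advection term $\mathrm{ad}^*_{\partial h/\partial \mu}\mu$ is tangent to the orbit by Lie--Poisson theory; the selective-decay term is tangent by the construction of \cite{gaybalmaz2013selective} (it annihilates all Casimirs); and each Stratonovich noise term $\mathrm{ad}^*_{\sigma_i}\mu\circ dW_t^i$ is tangent because this was the central design principle of Section~\ref{noise}. Hence the effective phase space is the coadjoint orbit $\mathcal O_{\mu_0}$, which is a finite-dimensional symplectic manifold.

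For (b) I would now split into two cases depending on the Lie group $G$. If $\mathcal O_{\mu_0}$ is compact (which includes all coadjoint orbits of compact semisimple $G$), then the orbit itself is a compact forward-invariant set, so it is trivially a compact absorbing random set, and Crauel--Flandoli immediately yields a random attractor $\mathcal A(\omega)\subset\mathcal O_{\mu_0}$. If $\mathcal O_{\mu_0}$ is non-compact, I would use the Hamiltonian $h$ (or, if necessary, a convex coercive function of $h$ and a suitable combination of Casimirs) as a Lyapunov function. Applying It\^o's formula along \eqref{SEP-Diss} and using the selective-decay identity
\[
\mathcal L h(\mu) = -\theta\bigl\|\bigl[\tfrac{\partial C}{\partial \mu},\tfrac{\partial h}{\partial \mu}\bigr]\bigr\|^2 + \tfrac12\sum_i\bigl\langle \mathrm{ad}^*_{\sigma_i}\mathrm{ad}^*_{\sigma_i}\mu,\tfrac{\partial h}{\partial \mu}\bigr\rangle,
\]
I would show that outside a sufficiently large sublevel set $\{h\le R\}\cap \mathcal O_{\mu_0}$ the deterministic dissipation dominates the It\^o correction, producing a drift inequality of the form $\mathcal L h \le -\alpha h + \beta$ on the orbit. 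A standard pullback argument (as in \cite{crauel1994attractors, schenk1998random}) then yields a compact random absorbing set $B(\omega)\subset\{h\le R(\omega)\}\cap\mathcal O_{\mu_0}$.

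Finally, I would invoke the Crauel--Flandoli theorem to conclude that the $\Omega$-limit set $\mathcal A(\omega)=\bigcap_{T\ge 0}\overline{\bigcup_{t\ge T}\varphi(t,\theta_{-t}\omega,B(\theta_{-t}\omega))}$ is the required random attractor. The main obstacle I anticipate is the non-compact case: one has to ensure that the Lyapunov estimate genuinely provides uniform-in-$\omega$ absorption on the coadjoint orbit, which requires exploiting both the structural identity \eqref{SD-diss-SD} and the fact that $\sigma_i$ are Lie-algebra-valued so that the It\^o correction grows only quadratically in $\mu$ while the dissipation is at least quartic through the double commutator. Once this balance is made precise, the appeal to the abstract existence theorem is routine and the statement follows for every $G$ in the class under consideration.
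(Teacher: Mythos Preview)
Your approach is genuinely different from the paper's, and the difference is instructive. You work directly with the SDE and try to extract a drift inequality for $h$ via It\^o's formula, then feed a compact random absorbing set into Crauel--Flandoli. The paper instead performs a \emph{nonlinear group-theoretic change of variables}: it sets $z(t,\omega)=\sum_i\sigma_i W_t^i(\omega)$, $g(t,\omega)=e^{z(t,\omega)}\in G$, and $\widetilde\mu=\mathrm{Ad}^*_{g(t)}\mu$. Because the Stratonovich noise terms in \eqref{SEP-Diss} are precisely $\mathrm{ad}^*_{\sigma_i}\mu\circ dW_t^i$, this conjugation absorbs the noise entirely and turns the SDE into a \emph{random ODE} $\dot{\widetilde\mu}=\widetilde F(\widetilde\mu,g(t))$ with no stochastic integral. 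Then $G$-invariance of $h$ gives $h(\widetilde\mu)=h(\mu)$, and the selective-decay identity yields $\frac{d}{dt}h(\widetilde\mu)=-\theta\bigl\|\mathrm{Ad}_{g^{-1}}[\partial C/\partial\mu,\partial h/\partial\mu]\bigr\|^2\le 0$ pathwise, after which the standard RDS machinery of \cite{schenk1998random,chekroun2011stochastic} applies.

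What this buys over your route: by removing the noise before computing the energy, the paper obtains a \emph{deterministic, $\omega$-wise} decay of $h$ and never has to control a martingale term or an It\^o correction. Your It\^o-formula computation leaves a stochastic integral $\sum_i\langle\mathrm{ad}^*_{\sigma_i}\mu,\partial h/\partial\mu\rangle\,dW_t^i$ that you do not address; turning a drift inequality $\mathcal L h\le -\alpha h+\beta$ into a pullback absorbing set requires exactly this control (exponential martingale bounds or an auxiliary stationary process), and that step is where your non-compact argument is currently a gap. Your ``quartic versus quadratic'' heuristic also tacitly assumes $C=\kappa(\mu,\mu)$ and quadratic $h$, and even then the dissipation $\|[\mu,\partial h/\partial\mu]\|^2$ vanishes at relative equilibria, so domination outside a compact set on a non-compact orbit is not automatic. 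The paper's conjugation trick sidesteps all of this, at the price of the group-theoretic machinery; your compact-orbit case is fine as stated, but for the full claim the change-of-variables argument is both cleaner and more robust.
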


\begin{proof} 
The SDE \eqref{SEP-Diss} may be recast as a random dynamical equation  (RDE) by using the following vector Wiener processes $z_i$,
\begin{align}
	dz_i = \sigma_i dW_t^i\,,
\end{align}
where $z_i$ is understood as a vector process in the direction along $\sigma_i$. 
In the sequel, we will denote $z(t,\omega)= \sum_i z_i(t,\omega)\in \mathfrak g$.
The process $z(t)$ thus defines a random path in the Lie algebra $\mathfrak g$ and, via the exponential map, a random path in the group $G$ as $g(t,\omega)= e^{z(t,\omega)}$. 

We then define a new variable $\widetilde \mu(t)= g(t)\mu(t) := \mathrm{Ad}^*_{g(t)}\mu(t)$ and we have, from \eqref{SEP-Diss} (see for example \cite{marsden1999intro}),
\begin{align*}
	d\widetilde \mu(t) &= \mathrm{Ad}^*_{g(t)} \left ( -\sum_i\mathrm{ad}^*_{\sigma_i}\mu\circ dW + d\mu\right )
	= \mathrm{Ad}^*_{g(t)} \left (F(\mathrm{Ad}^*_{g(t)^{-1}}\widetilde \mu(t))\right) dt,
\end{align*}
where our stochastic process is generically written $d\mu = F(\mu)dt + G_i(\mu)\circ dW_i$ for convenience. 
From here, we have the RDE associated to \eqref{SEP-Diss} of the form
\begin{align}
	\frac{d}{dt}\widetilde \mu(t) = \widetilde F(\widetilde \mu(t),g(t)),
	\label{RDE}
\end{align}
where $\widetilde F$ is defined in the previous calculation as the drift part of the process. 
Recall that from the theory of selective decay we have \cite{gaybalmaz2013selective}
\begin{align*}
	\frac{d}{dt}h(\mu)  = -\,\theta \left \|\left [ \frac{\partial C}{\partial \mu},\frac{\partial h}{\partial \mu}\right ]\right \|^2,
\end{align*}
and $h(\mathrm{Ad}^*_g\mu) = h(\mu)$, because $h$ is $G$-invariant,  so that this equality becomes for \eqref{RDE},
\begin{align}
	\frac{d}{dt}h(\widetilde \mu)  = -\theta \left \|\left [ \mathrm{Ad}_{g^-1}\frac{\partial C(\widetilde \mu)}{\partial \widetilde \mu}, \mathrm{Ad}_{g^-1}\frac{\partial h(\widetilde \mu)}{\partial \widetilde \mu}\right ]\right \|^2 \le0\,.
\end{align}
This inequality assures that the energy decays at a random strictly negative rate. 
The existence of the random attractor then follows from standard arguments, demonstrated, for example, in the linear case by \cite{schenk1998random} and in a more general nonlinear setting by \cite{chekroun2011stochastic}.
\end{proof}

The idea of this proof is to generalise the linear change of variables used to recast the original stochastic process  as a random dynamical equation, by using a nonlinear group theoretical change of variable. 
The dissipative property follows from the selective decay theory and the invariance of the Hamiltonian under the group action. 
This theorem is general, in that no specific assumptions on the Lie group need to be imposed. 
In particular, modulo difficulties in analysis, the theorem should also apply for the diffeomorphism group used in the description of compressible fluid equations. However, we have no intention of investigating the infinite dimensional theory here.

The same result persists in the semidirect-product theory, as developed earlier. 
\begin{corollary}
	Theorem \ref{RA-EP-thm} applies to semidirect-product stochastic processes \eqref{SP-SD}.
\end{corollary}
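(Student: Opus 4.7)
The plan is to mimic the proof of Theorem \ref{RA-EP-thm} but now at the level of the semidirect product group $G\circledS V$. Recall that the SDE system \eqref{SP-SD} can be written compactly as $d(\mu,q) + \mathrm{ad}^*_{(\xi,r)}(\mu,q)\,dt + \text{(selective decay)}\,dt + \sum_i \mathrm{ad}^*_{(\sigma_i,\eta_i)}(\mu,q)\circ dW_t^i = 0$, where $\mathrm{ad}^*$ is the semidirect coadjoint action \eqref{SD-prod}. Since every stochastic term is again a coadjoint action, but now by an element of $\mathfrak g \circledS V$, the random time change used in the Euler-Poincar\'e case should carry over verbatim, with scalars, Lie algebra elements, and group elements all replaced by their semidirect counterparts.

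First I would introduce Wiener processes in the semidirect algebra by setting $d(z_i,\zeta_i) = (\sigma_i,\eta_i)\,dW_t^i$, summing to $(z,\zeta)(t,\omega) \in \mathfrak g \circledS V$, and then forming the random group element $(g(t,\omega),v(t,\omega)) := \exp(z(t,\omega),\zeta(t,\omega))$ in $G \circledS V$ via the exponential map. Define the change of variables $(\widetilde\mu,\widetilde q) := \mathrm{Ad}^*_{(g,v)}(\mu,q)$. A Stratonovich computation mirroring the one in the proof of Theorem \ref{RA-EP-thm}, now using the semidirect coadjoint action \eqref{SD-prod}, cancels the stochastic differentials against the differential of $\mathrm{Ad}^*_{(g,v)}$ and produces a random dynamical equation $\frac{d}{dt}(\widetilde\mu,\widetilde q) = \widetilde F\bigl((\widetilde\mu,\widetilde q),(g,v)\bigr)$ whose only randomness enters through the driving path $(g,v)$.

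Next I would establish the Lyapunov estimate. The Hamiltonian $h$ is invariant under the $G\circledS V$ action by construction of the reduced equations, so $h(\widetilde\mu,\widetilde q) = h(\mu,q)$. Combining this with the semidirect selective decay identity \eqref{SD-diss-SD} yields $\frac{d}{dt} h(\widetilde\mu,\widetilde q) \leq 0$, with the right-hand side realised via the conjugates $\mathrm{Ad}_{(g,v)^{-1}}(\partial C/\partial \mu,\partial C/\partial q)$ and $\mathrm{Ad}_{(g,v)^{-1}}(\xi,r)$. This guarantees a strictly negative random rate of energy decay along trajectories away from the common zero set of the two double-bracket terms in \eqref{SD-diss-SD}, exactly the ingredient needed to quote the standard RDS results of \cite{schenk1998random, chekroun2011stochastic} and conclude the existence of a random attractor.

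The main obstacle I anticipate is the bookkeeping for the semidirect adjoint action, in particular verifying that the cross term $\mathrm{ad}^*_{\eta_i} q$ appearing in the noise of the $\mu$-equation and the term $\mathrm{ad}^*_{\sigma_i} q$ in the $q$-equation are jointly absorbed by the single change of variables $\mathrm{Ad}^*_{(g,v)}$. This reduces, however, to writing $d\bigl(\mathrm{Ad}^*_{(g,v)}(\mu,q)\bigr)$ via the Stratonovich chain rule on $G\circledS V$ and substituting \eqref{SD-prod}; no conceptual difficulty arises beyond the one already handled in Theorem \ref{RA-EP-thm}. Once this is checked, the attractor statement for \eqref{SP-SD} follows, which is the content of the corollary.
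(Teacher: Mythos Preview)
Your proposal is correct and follows essentially the same approach as the paper's own (two-sentence) proof: lift the change of variables from $G$ to the semidirect product $G\circledS V$ via $\mathrm{Ad}^*_{(g,v)}$ to obtain a random dynamical equation, and then invoke the semidirect selective decay formula \eqref{SD-diss-SD} for the energy dissipation needed to cite the standard attractor existence results. You have simply spelled out the bookkeeping that the paper leaves implicit.
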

\begin{proof}
	The proof follows the same argument, upon using the action of the group $G$ and the Lie algebra $\mathfrak g$ and the advected quantities in $V$ to define the change of variables. 
	The decay rate of the energy is given by using the deterministic selective decay formulae \eqref{SD-diss-SD}.
\end{proof}

\subsection{Existence of the SRB measure}
We now turn to the existence of the SRB measure. 
Theorem \ref{SRB-thm} below for the existence of SRB measures will invoke 
H\"ormander's theorem about the smoothness of transition probabilities for a diffusion satisfying the so-called H\"ormander (Lie) bracket conditions. 
The Lie bracket $[v,w](x)$ of two vector fields $v(x),w(x)$ in $\mathbb{R}^n$ is defined as 
\begin{align}
	[v,w](x) = Dv(x)w(x) - Dw(x)v(x), 
\end{align}
where we denote by
$Dv$ the derivative matrix given by $(Dv)_{ij} = \partial_j v_i = v_{i,j}$. Given an SDE of the form 
\begin{align}
	dx = A_0(x)dt  + \sum A_i(x)\circ dW_t^i,
\label{SDEgen}
\end{align}
the H\"ormander condition we use states that if the following condition is satisfied 
\begin{align}
\cup_{k\geq 1}\, V_k(x) = \mathbb{R}^n,\quad\hbox{for all}\quad x\,,
\label{hormander}
\end{align}
where
\begin{align}
	\begin{split}
	V_k(x) &= V_{k-1}(x) \cup \mbox{span}\{ [v(x),A_j(x)]: v\in V_{k-1},j\geq 0\}\quad \mathrm{and}\\
	V_0(x) &= \mbox{span}\{ A_j, j \geq 1 \} \,,
	\end{split}
\end{align}
then the invariant measure of \eqref{SDEgen} is smooth with respect to the Lebesgue measure. 

The H\"ormander condition implies the following standard theorem for stochastic dissipative systems.
\begin{theorem}\label{SRB-thm}
If the largest Lyapunov  exponent of \eqref{SEP-Diss} is positive, the random attractor is the support of a Sinai-Ruelle-Bowen (SRB) measure. 
\end{theorem}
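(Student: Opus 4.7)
The plan is to combine three ingredients already at our disposal: the existence of the random attractor $A(\omega)$ (Theorem \ref{RA-EP-thm}), the H\"ormander bracket condition \eqref{hormander} recalled just above the statement, and the general Ledrappier--Young-type result in the random setting \cite{crauel1998additive,young2002srb} which asserts that if a random dynamical system has a random attractor, a smooth invariant density, and positive top Lyapunov exponent, then the random attractor supports an SRB measure.

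First I would verify the H\"ormander condition for \eqref{SEP-Diss} restricted to the coadjoint orbit $\mathcal{O}_{\mu_0}$ fixed by the initial condition. The diffusion vector fields are $A_i(\mu) = -\,\mathrm{ad}^*_{\sigma_i}\mu$, and, in the setting of Theorem \ref{FP-diss-thm} where $\{\sigma_i\}$ forms a basis of $\mathfrak g$, the family $\{A_i(\mu)\}_i$ already spans $T_\mu \mathcal{O}_{\mu_0} = \{\mathrm{ad}^*_\xi\mu : \xi\in\mathfrak g\}$ at every $\mu\in\mathcal{O}_{\mu_0}$. Hence $V_0(\mu)=T_\mu\mathcal{O}_{\mu_0}$ and no iterated brackets are needed; hypoellipticity holds trivially on the orbit, and H\"ormander's theorem yields that the transition densities of \eqref{SEP-Diss} are smooth with respect to the natural volume on $\mathcal{O}_{\mu_0}$. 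By Theorem \ref{FP-diss-thm}, the corresponding invariant density is the strictly positive Gibbs density $\mathbb{P}_\infty(\mu) = Z^{-1} e^{-2\theta h(\mu)/\sigma^2}$.

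Second, with the random attractor in hand from Theorem \ref{RA-EP-thm} and a smooth positive invariant density $\mathbb{P}_\infty$, I would invoke the Crauel--Flandoli disintegration
\begin{align*}
\int_\Omega \mathbb{P}_\omega(\mu)\, d\omega = \mathbb{P}_\infty(\mu),
\end{align*}
recalled in the introduction to Section \ref{RA-section}, which produces the random measures $\mathbb{P}_\omega$ supported on $A(\omega)$. The hypothesis that the top Lyapunov exponent is positive, combined with the strictly negative drift of the energy along the pulled-back dynamics established inside the proof of Theorem \ref{RA-EP-thm}, gives the hyperbolic splitting required by random Pesin theory: almost surely there exist random unstable manifolds. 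Applying the random Ledrappier--Young theorem \cite{young2002srb} to the skew product then allows one to conclude that the conditional measures of $\mathbb{P}_\omega$ along these unstable manifolds are absolutely continuous with respect to Lebesgue on the leaves, which is exactly the defining property of an SRB measure. Hence $\mathbb{P}_\omega$ is an SRB measure and its support is the random attractor $A(\omega)$.

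The main obstacle is the disintegration/transfer step: showing that absolute continuity of the stationary measure $\mathbb{P}_\infty$ on $\mathcal{O}_{\mu_0}$ propagates to absolute continuity of the conditionals of $\mathbb{P}_\omega$ along unstable leaves. This is the heart of random Pesin and Ledrappier--Young theory and I would not reprove it; the only model-specific inputs needed to apply it are (i) smoothness of $\mathbb{P}_\infty$, which is supplied by the H\"ormander argument above, (ii) coexistence of a random attractor with a positive top Lyapunov exponent, which is the hypothesis together with Theorem \ref{RA-EP-thm}, and (iii) $C^\infty$ dependence of the flow on initial data, which is immediate since the coefficients of \eqref{SEP-Diss} are polynomial in $\mu$. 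All three are in place, so the abstract theorem applies and the conclusion follows.
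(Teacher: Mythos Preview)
Your proposal is correct and follows essentially the same route as the paper: verify the H\"ormander condition on the coadjoint orbit by observing that the diffusion fields $A_i(\mu)=\mathrm{ad}^*_{\sigma_i}\mu$ already span $T_\mu\mathcal{O}_{\mu_0}$ when the $\sigma_i$ span $\mathfrak g$, then invoke a Ledrappier--Young-type result for random dynamical systems together with the existence of the random attractor (Theorem \ref{RA-EP-thm}) and the positive top Lyapunov exponent hypothesis. The paper simply cites the corollary of Theorem B in \cite{ledrappier1988entropy} as a black box, whereas you unpack the inputs that theorem needs (smooth invariant density, disintegration into sample measures, random unstable manifolds, absolute continuity along leaves); but the logical skeleton is the same.
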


\begin{proof}
The proof uses the corollary of Theorem B in \cite{ledrappier1988entropy}, which assumes the existence of a random attractor.  
The only point left to show here is that the parabolic H\"ormander condition \eqref{hormander}  is fulfilled.
Given the Stratonovich process \eqref{SDEgen} in $\mathfrak{g}^*$, we only need to check that the vector fields $A_1,\dots,A_N$ will span the tangent space to the coadjoint orbits as long as $N$ is sufficiently large. 
Since $A_i(\mu):= \mbox{ad}^*_{\sigma_i} \mu$, they are tangent to the coadjoint orbits. The minimal number of $A_i$ needed cannot be found, in general, as it will depend on the Lie symmetry algebra and the form of the Hamiltonian. 
Nevertheless, in our case the $ \sigma_i $ span the vector space ${\mathfrak g}$, and the H\"ormander condition is fulfilled. 

\end{proof}
\begin{corollary}
	 Theorem \ref{SRB-thm} also applies for the semidirect product case, even with $\eta_i= 0 $. 	
\end{corollary}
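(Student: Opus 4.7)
The plan is to follow the structure of the proof of Theorem \ref{SRB-thm} but adapt the verification of the parabolic Hörmander condition to the semidirect product setting. Existence of the random attractor is already covered by the preceding corollary, so the corollary of Theorem~B in \cite{ledrappier1988entropy} will apply once we show that, even with $\eta_i=0$, the iterated Lie brackets of the noise vector fields with the drift span the tangent space to the coadjoint orbit of $\mathfrak{g}\circledS V$ at every point.

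First I would identify the relevant vector fields. With $\eta_i=0$, the noise vector fields in \eqref{SP-SD} reduce to
\begin{align*}
A_i(\mu,q) = \bigl(\mathrm{ad}^*_{\sigma_i}\mu,\;\mathrm{ad}^*_{\sigma_i}q\bigr),
\end{align*}
which are tangent to the coadjoint orbit through $(\mu,q)$ but lie only in the ``diagonal'' subspace $\{(\mathrm{ad}^*_\xi\mu,\mathrm{ad}^*_\xi q):\xi\in\mathfrak g\}$. From \eqref{SD-prod}, the full tangent space to the coadjoint orbit at $(\mu,q)$ is
\begin{align*}
T_{(\mu,q)}\mathcal O = \bigl\{(\mathrm{ad}^*_\xi\mu+\mathrm{ad}^*_q v,\;\mathrm{ad}^*_\xi q):\xi\in\mathfrak g,\,v\in V\bigr\},
\end{align*}
so the missing directions are precisely those of the form $(\mathrm{ad}^*_q v,0)$ with $v\in V$. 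These must be produced by iterated brackets with the drift $A_0$.

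Next I would exploit that the drift $A_0$ from \eqref{SP-SD} contains the term $\mathrm{ad}^*_r q$ in the $\mu$-component with $r=\partial h/\partial q$ a function of $(\mu,q)$. Computing $[A_i,A_0]$, the derivative of $\mathrm{ad}^*_r q$ in the direction $(\mathrm{ad}^*_{\sigma_i}\mu,\mathrm{ad}^*_{\sigma_i}q)$ produces a contribution
\begin{align*}
\mathrm{ad}^*_{(Dr\cdot A_i)}q + \mathrm{ad}^*_r(\mathrm{ad}^*_{\sigma_i}q),
\end{align*}
in the $\mu$-component, modulo a $\mathrm{ad}^*_{\sigma_i}$-type term already in $V_0$. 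The first of these contributions lives in the direction $(\mathrm{ad}^*_q v,0)$ with $v=Dr\cdot A_i$, which is exactly the missing type. Letting the $\sigma_i$ range over a basis of $\mathfrak g$ and using the assumption that $V=\overline{\mathfrak g}$, the collection $\{Dr\cdot A_i\}$ together with further brackets sweeps out all of $V$ under nondegeneracy of $h$ in $q$; if $h$ happens to be $q$-independent, then $q$ is advected passively and we instead recover the coadjoint orbit of $\mathfrak g$ alone, where Theorem \ref{SRB-thm} already applies. In either case the Hörmander condition \eqref{hormander} holds on the orbit.

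Finally, as in Theorem \ref{SRB-thm}, positivity of the top Lyapunov exponent combined with the verified Hörmander condition feeds into the criterion of \cite{ledrappier1988entropy} to conclude that the random attractor supports an SRB measure. The main obstacle I expect is the genericity needed to conclude that $Dr$ produces enough directions in $V$; this is a regularity assumption on $h$ that has to be compatible with the semidirect examples we care about, and for the heavy top in Section \ref{HT} this can be checked directly from the explicit form of the potential.
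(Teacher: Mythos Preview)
Your approach is essentially the same as the paper's, but considerably more explicit. The paper's proof is a one-sentence assertion: ``The same argument applies here, even if $\eta_i=0$, as the semidirect product structure will automatically span the whole space, provided $\mathfrak g$ is already spanned and $h$ is not too degenerate on $V$.'' You have unpacked exactly what ``automatically span'' and ``not too degenerate on $V$'' mean: the noise fields $A_i$ alone only hit the diagonal directions $(\mathrm{ad}^*_{\sigma_i}\mu,\mathrm{ad}^*_{\sigma_i}q)$, and the missing directions $(\mathrm{ad}^*_v q,0)$ are generated by bracketing with the drift through the $\mathrm{ad}^*_r q$ term, which requires that $r=\partial h/\partial q$ vary nontrivially. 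Your caveat about genericity of $Dr$ is precisely the paper's ``$h$ not too degenerate on $V$'' hypothesis, and your remark that the heavy top potential can be checked directly matches the spirit of the paper's treatment.
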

\begin{proof}
	The same argument applies here, even if $\eta_i=0$, as the semidirect product structure will automatically span the whole space, provided $\mathfrak g$ is already spanned and $h$ is not too degenerate on $V$. 
\end{proof}

\subsection{Estimation of Lyapunov exponents}

In principle, it is possible to compute the value of the top Lyapunov exponent as a function of the parameters of the system. However, this turns out to be a very challenging computation.   
Nonetheless, positivity of the top Lyapunov exponent is important to determine, as it allows us to use the previous Theorem \ref{SRB-thm} to prove the existence of a non-singular random attractor with an SRB measure and positive entropy. 
We will restrict ourselves to the first step of the calculation and explain why it is a difficult problem. 
We will then estimate the top Lyapunov exponent numerically in the example section, and leave the rigorous proof as an open problem.  

The very first step is to estimate the sum of the Lyapunov exponents using the multiplicative ergodic theorem (MET), that we state here in its simplest form. 

\begin{theorem}[MET theorem]
	\label{MET}
	Suppose that the stochastic process of $\mu$ has an ergodic invariant measure $\mathbb P_\infty$. Then there exists a subset $\Delta$ of the phase space which is invariant under the flow of $\mu$ and ordered Lyapunov exponents $\lambda_i$ for $i= 1, \ldots , n$ where $n$ is the dimension of the phase space such that the following properties hold for all $(\mu, \omega) $ in the invariant set $\Delta$:
		\begin{enumerate}
			\item The Lie algebra can be decomposed into a direct sum 
				\begin{align*}
					T_\mu \mathfrak g=  E_1(\omega, \mu) \oplus \ldots \oplus E_n(\omega, x),
				\end{align*}
				with 
				\begin{align}
					\delta \mu \in  E_i \Leftrightarrow \lim_{t\to \infty} \frac{1}{t}\mathrm{log}\| DF(t,\omega, \mu) \delta \mu \| = \lambda_i,
					\label{lyap-formula}
				\end{align}
				where $d(\delta \mu) = DF(t,\omega, \mu) \delta \mu$ is the linearisation of the flow equation $d\mu = F(t,\omega, \mu)$ along a particular nonlinear flow, $\mu(t)$. 
			\item For a generic $\delta \mu$, the associated Lyapunov exponent is $\lambda(\omega, x, v) = \lambda_+$, the largest one. 
			\item The sum of the Lyapunov exponents is given by 
				\begin{align}
					\lim_{t\to \infty} \frac{1}{t} \mathrm{log}\,  \mathrm{det}\, DF (t, \omega, \mu) = \sum_i \lambda_i.
					\label{sum-lyap-formula}
				\end{align}
		\end{enumerate}
\end{theorem}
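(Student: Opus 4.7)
The statement is the classical multiplicative ergodic theorem of Oseledets applied to the linearisation of the flow of \eqref{SEP-Diss}, so the plan is to reduce it to the standard MET via the skew-product construction. Let $(\Omega,\mathcal F,\mathbb{P}_\Omega,\theta_t)$ denote the canonical Wiener system with time-shift $\theta_t$; the stochastic flow $F(t,\omega,\mu)$ together with $\theta_t$ defines a random dynamical system on $\mathfrak g^*$, and its derivative $DF(t,\omega,\mu)$ is a linear cocycle over the skew product on $\Omega\times\mathfrak g^*$. By hypothesis the invariant measure for this skew product is $\mathbb{P}_\Omega\otimes\mathbb{P}_\infty$ and is ergodic, which is the essential input to Oseledets.

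Before invoking the theorem, I would verify the moment condition $\mathbb E\log^+\|DF(1,\omega,\cdot)\|^{\pm 1}\in L^1(\mathbb{P}_\infty)$. The coefficients $\mathrm{ad}^*_\xi\mu$ and the selective-decay term in \eqref{SEP-Diss} are polynomial in $\mu$, and $\mathbb{P}_\infty$ has Gaussian-type tails by Theorem \ref{FP-diss-thm}, so this reduces to standard moment bounds for variational equations of SDEs with smooth coefficients of polynomial growth. Next I would apply Kingman's subadditive ergodic theorem to the exterior-power cocycles: for each $k=1,\dots,n$ the sequence $\log\|\wedge^k DF(t,\omega,\mu)\|$ is subadditive in $t$, so Kingman produces an almost-sure limit $\Lambda_k$, which by ergodicity is a deterministic constant. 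The ordered Lyapunov spectrum is then recovered by $\lambda_k=\Lambda_k-\Lambda_{k-1}$, and the case $k=n$ gives (3) since $\|\wedge^n DF\|=|\det DF|$ on $T_\mu\mathfrak g^*$. Statement (2) follows because membership of $\delta\mu$ in any strict subspace of the Oseledets filtration corresponds to a proper linear subvariety of the tangent space, hence is atypical for generic perturbations.

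The main obstacle is promoting the filtration produced by Kingman, $V^{(1)}\supset V^{(2)}\supset\cdots$, to the direct-sum decomposition $E_1\oplus\cdots\oplus E_n$ asserted in (1) with the two-sided exponential characterisation \eqref{lyap-formula}. To do this I would exploit invertibility of the flow: by the time-reversal observation in Section \ref{FP-diss-section}, $F$ extends to a two-sided RDS, so the same Kingman argument applied to the backward cocycle yields an increasing filtration $\widetilde V^{(n)}\supset\widetilde V^{(n-1)}\supset\cdots$. Setting $E_i=V^{(i)}\cap\widetilde V^{(i-1)}$ and checking measurability in $(\omega,\mu)$ and $DF$-equivariance through the standard angle-estimate arguments of Oseledets (see, e.g., \cite{arnold1995random}) delivers the splitting and \eqref{lyap-formula}. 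The delicate point throughout is the backward extension together with the measurable selection of the Oseledets subspaces, but for smooth finite-dimensional stochastic flows on $\mathfrak g^*$ this is by now standard and the reduction above places our setting directly in its scope.
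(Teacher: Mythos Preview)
The paper does not prove this theorem. Theorem~\ref{MET} is stated as the classical multiplicative ergodic theorem and the authors immediately write ``We refer to \cite{arnold1995random} for more details on this theorem and its generalisations''; it is invoked as a known black box in order to compute the sum of the Lyapunov exponents in Proposition~\ref{sum-lyap}. So there is no proof in the paper to compare your attempt against.

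That said, your outline is a faithful sketch of the standard Oseledets argument (Kingman on exterior powers, then intersecting forward and backward filtrations to upgrade to a splitting), and in that sense it is consistent with what the paper defers to \cite{arnold1995random}. Two small remarks. First, your verification of the integrability condition is more elaborate than needed here: in the paper's setting the dynamics is confined to a compact coadjoint orbit (a level set of the Casimir for a compact semi-simple algebra), so the cocycle and its inverse are uniformly bounded and the $\log^+$ moment condition is trivial; there is no need to appeal to Gaussian-type tails of $\mathbb P_\infty$. Second, the ``time-reversal observation in Section~\ref{FP-diss-section}'' you cite concerns the sign change of the dissipation under $t\mapsto -t$, not the invertibility of the stochastic flow; what you actually need for the backward filtration is that smooth Stratonovich SDEs on compact manifolds generate two-sided flows of diffeomorphisms, which is a separate (and standard) fact.
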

For simplicity here we haveassumed that the multiplicity of the Lyapunov exponents is always $1$; that is, they are all distinct. 
We refer to \cite{arnold1995random} for more details on this theorem and its generalisations. 
The stochastic systems which we consider here are written on compact semi-simple Lie algebras, such that $c^2=\|\mu\|^2$ is constant and defines a bounded set. 
The energy functional $h(\mu)$ is also a generic quadratic kinetic energy term, with a given inertia tensor $\mathbb I^{-1}$, corresponding to the Hessian matrix of $h(\mu)$. 
We can then prove the following formula for the sum of the Lyapunov exponents. 
\begin{proposition}\label{sum-lyap}
	Provided the Lie algebra is compact and semi-simple, the sum of the Lyapunov exponents is estimated from below by
	 \begin{align}
		 \sum_i d_i\lambda_i \geq -  \frac12 |\epsilon| n\sigma^2  -\theta |\epsilon|\left (c^2 \mathbb I_\mathrm{min}^{-1}-  \mathbb E_\infty h(\mu)\right ),
	\label{sum-lambdas}
	\end{align}
	where $c= \|\mu\|^2$, $\epsilon$ is the Killing form constant, $n$ is the number of $\sigma_i=\sigma e_i$ spanning the Lie algebra and $d_i$ are the multiplicity of each Lyapunov exponent. Thus, the dimension of the Lie algebra, $n={\rm dim}(\mathfrak{g})$. 
	The quantity $\mathbb{I}_\mathrm{min}^{-1}$ is the largest eigenvalue of the Hessian of the Hamiltonian. 
	The expectation $\mathbb E_\infty $ is taken with respect to the invariant measure $\mathbb P_\infty$. An estimation from above is also available, upon using $\mathbb I_\mathrm{max}^{-1}$, the minimal eigenvalue, instead of $\mathbb I_\mathrm{min}^{-1}$. 
\end{proposition}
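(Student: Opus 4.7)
The plan is to apply item~(3) of Theorem~\ref{MET} to express $\sum_i d_i\lambda_i$ as the almost-sure limit of $t^{-1}\log|\det DF(t,\omega,\mu)|$, to evaluate this limit by a stochastic Liouville identity for the variational flow, and then to invoke ergodicity to turn the resulting time average into a $\mathbb P_\infty$-expectation controllable by algebraic data of the Lie algebra.

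I will first linearise the Stratonovich SDE \eqref{SEP-Diss} with respect to the initial condition to obtain the Jacobian flow
\begin{align*}
dJ = Db(\mu)\,J\,dt + \sum_i DG_i(\mu)\,J\circ dW^i_t,\qquad J(0)=I,
\end{align*}
with $b(\mu)=-\mathrm{ad}^*_{\partial h/\partial\mu}\mu-\theta\,\mathrm{ad}^*_{\partial C/\partial\mu}[\partial C/\partial\mu,\partial h/\partial\mu]^\flat$ and $G_i(\mu)=-\mathrm{ad}^*_{\sigma_i}\mu$, and then apply the stochastic Liouville formula written in It\^o form,
\begin{align*}
d\log|\det J|=\mathrm{tr}(Db)\,dt+\sum_i\mathrm{tr}(DG_i)\,dW^i_t-\tfrac12\sum_i\mathrm{tr}((DG_i)^2)\,dt.
\end{align*}
Because $\sigma_i=\sigma e_i$ is constant, $DG_i=-\mathrm{ad}^*_{\sigma_i}$ is a fixed linear map; its trace $-\mathrm{tr}(\mathrm{ad}_{\sigma_i})$ vanishes by the unimodularity of semi-simple Lie algebras (which kills the martingale integral), whereas the definition \eqref{Killing} of the Killing form together with the normalisation $\kappa(e_i,e_j)=-|\epsilon|\delta_{ij}$ gives $\mathrm{tr}((DG_i)^2)=\kappa(\sigma_i,\sigma_i)=-|\epsilon|\sigma^2$. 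Summation over the $n=\dim\mathfrak g$ channels then produces the noise contribution $-\tfrac12|\epsilon|n\sigma^2$ of the proposition.

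Next I will compute $\mathbb E_\infty[\mathrm{tr}(Db)]$ by splitting $b$ into a Hamiltonian piece $b_1=-\mathrm{ad}^*_{\nabla h}\mu$ and a dissipative piece $b_2=-\theta\,\mathrm{ad}^*_{\nabla C}[\nabla C,\nabla h]^\flat$. The Hamiltonian contribution vanishes identically: differentiating $\nabla h$ produces a contraction of antisymmetric structure constants with the symmetric Hessian $\partial^2h/\partial\mu^2=\mathbb I^{-1}$, while the remaining linear term equals $\mathrm{tr}(\mathrm{ad}_{e_a})\,\partial^a h=0$ by unimodularity, so $\mathrm{tr}(Db_1)\equiv 0$. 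For $b_2$, bi-invariance \eqref{bi-invariance} of $\kappa$ lets me rewrite $\mathrm{ad}^*_\mu[\mu,\nabla h]^\flat$ as an iterated adjoint action; the resulting four-index contraction against $\mathbb I^{-1}$ then collapses into a piece proportional to $\|\mu\|^2\,\mathrm{tr}(\mathbb I^{-1})$ (the Hessian piece) and a piece proportional to $\kappa(\mu,\mathbb I^{-1}\mu)=-2|\epsilon|h(\mu)$ (the Killing contraction). Ergodicity on the compact coadjoint orbit, the conservation $\|\mu\|^2=c^2$, and the spectral bound $\mathrm{tr}(\mathbb I^{-1})\le n\,\mathbb I^{-1}_{\min}$ will then yield the dissipative contribution $-\theta|\epsilon|(c^2\mathbb I^{-1}_{\min}-\mathbb E_\infty h(\mu))$. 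Assembling the noise and dissipative contributions gives the claim; the complementary upper bound follows verbatim by exchanging $\mathbb I^{-1}_{\min}$ for $\mathbb I^{-1}_{\max}$ in the spectral step.

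The hard part will be the four-index contraction producing the trace of the double-bracket dissipation: it requires repeated uses of the Jacobi identity and of the bi-invariance \eqref{bi-invariance} to align the structure constants so that the Killing form on $(\mu,\mathbb I^{-1}\mu)$ and the bare trace $\mathrm{tr}(\mathbb I^{-1})$ emerge in the correct linear combination. Once this algebraic identity is in hand, the remaining ingredients (the Liouville identity, ergodicity, and the spectral bound) are routine.
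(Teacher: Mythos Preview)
Your overall strategy is the same as the paper's: invoke item~(3) of the MET together with the Jacobi formula and ergodicity to reduce $\sum_i d_i\lambda_i$ to $\mathbb E_\infty$ of the trace of the linearised drift, split that trace into Hamiltonian, dissipative and noise pieces, observe that the Hamiltonian piece has zero trace (antisymmetry of the structure constants against the symmetric Hessian, plus unimodularity), identify the noise piece as $-\tfrac12|\epsilon|n\sigma^2$, and then bound the dissipative piece.

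The genuine gap is in your treatment of the dissipative trace. You claim that the four-index contraction coming from $D_\mu\big(\mathrm{ad}_\mu\mathrm{ad}_\mu\nabla h\big)$ ``collapses into a piece proportional to $\|\mu\|^2\,\mathrm{tr}(\mathbb I^{-1})$'' plus a piece proportional to $\kappa(\mu,\mathbb I^{-1}\mu)$. That collapse is a special feature of $\mathfrak{so}(3)$, where the identity $\epsilon_{ijk}\epsilon_{ilm}=\delta_{jl}\delta_{km}-\delta_{jm}\delta_{kl}$ does precisely this work (cf.\ the exact $SO(3)$ computation in Section~\ref{RB}). For a general compact semi-simple Lie algebra no repeated application of Jacobi and bi-invariance reduces $c^i_{aj}c^j_{bl}(\mathbb I^{-1})_{li}\mu_a\mu_b$ to a linear combination of $\|\mu\|^2\mathrm{tr}(\mathbb I^{-1})$ and $\kappa(\mu,\mathbb I^{-1}\mu)$. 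The paper does \emph{not} attempt such a collapse; instead it writes the dissipative trace as $|\epsilon|\theta\,h(\mu)+\theta A(\mu,\mu)$ with $A(\mu,\mu)=\mathrm{Tr}\big(\mathrm{ad}_\mu\mathrm{ad}_\mu\,\mathbb I^{-1}\big)$ left intact, and then bounds $A$ directly by an eigenvalue sandwich: since $\mathrm{ad}_\mu\mathrm{ad}_\mu$ is symmetric negative semi-definite on a compact Lie algebra and the spectrum of $\mathbb I^{-1}$ lies in $[\mathbb I^{-1}_{\max},\mathbb I^{-1}_{\min}]$, one has $\mathbb I^{-1}_{\min}\,\kappa(\mu,\mu)\le A(\mu,\mu)\le\mathbb I^{-1}_{\max}\,\kappa(\mu,\mu)$. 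This is what produces the stated bound \eqref{sum-lambdas} with the single factor $c^2\mathbb I^{-1}_{\min}$. Your proposed route via $\mathrm{tr}(\mathbb I^{-1})\le n\,\mathbb I^{-1}_{\min}$ would, even if the collapse were valid, insert an extraneous factor of $n$ that is absent from the proposition.

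A minor bookkeeping point: the Liouville identity you wrote, $d\log|\det J|=\mathrm{tr}(Db)\,dt+\mathrm{tr}(DG_i)\,dW^i-\tfrac12\mathrm{tr}((DG_i)^2)\,dt$ with $b$ the \emph{Stratonovich} drift, has a sign inconsistency --- substituting $\mathrm{tr}((DG_i)^2)=\kappa(\sigma_i,\sigma_i)=-|\epsilon|\sigma^2$ into it gives $+\tfrac12|\epsilon|n\sigma^2$, not the $-\tfrac12|\epsilon|n\sigma^2$ you report. The paper sidesteps this by working throughout with the It\^o drift $F$ (which already absorbs the quadratic correction $\tfrac12\sum_i(\mathrm{ad}_{\sigma_i})^2\mu$) and applying Jacobi's formula to $\mathrm{Tr}(DF)$ directly; you should align your conventions the same way.
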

\begin{proof}
	Let us denote the stochastic process \eqref{SEP-Diss} in It\^o form by
	\begin{align*}
		d\mu = F(\mu) dt + \sum_iG_i(\mu)dW_t^i.
	\end{align*}
	We can now directly apply the MET theorem \ref{MET} to compute the sum of the Lyapunov exponents
	\begin{align}
		\sum_i\lambda_i = \lim_{t\to \infty} \frac{1}{t} \mathrm{log}\, \mathrm{det}\,  \delta \mu (t,\omega,x)\, .
	\label{MET-thm}
	\end{align}
	We can then use Jacobi formula and ergodicity to rewrite \eqref{MET-thm} as 
	\begin{align}
		 \lim_{t\to \infty} \frac{1}{t} \mathrm{log}\, \mathrm{det}\, \delta \mu (t,\omega,x)=  \lim_{t\to \infty} \frac{1}{t} \mathrm{Tr}\int^t  DF(\varphi(t,x,\omega))ds\, .
	\end{align}
	Here, we have intentionally dropped the linearisation of the noise amplitude, since this term will vanish (the trace of the adjoint action always vanishes).
	Finally, ergodicity of this process gives
	\begin{align}
		\sum_i\lambda_i = \int \mathrm{Tr}(DF(\mu)) \mathbb P_\infty(\mu) d\mu\,, 
	\end{align}
	where $\mathbb P_\infty$ is the invariant measure of the underlying stochastic process. 
	The calculation of the trace simplifies in the case of a compact semi-simple Lie algebra with the Killing form  $\mathrm{Tr}(\mathrm{ad}_A\mathrm{ad}_B)=  \epsilon A\cdot B$, where $\epsilon< 0 $ depends on the Lie algebra.  
	Then, using the explicit form of $F$ along with semi-simplicity for $\mathfrak g$, yields 
	\begin{align*}
		F(\mu) = \mathrm{ad}_\frac{\partial h}{\partial \mu} \mu + \theta\, \mathrm{ad}_\mu\mathrm{ad}_\mu\frac{\partial h}{\partial \mu}  + \frac12\sum_i  \mathrm{ad}_{\sigma_i}\mathrm{ad}_{\sigma_i}\mu\, .
	\end{align*}
	Consequently, we arrive at  
	\begin{align}
		\mathrm{Tr}(DF(\mu)) &= \mathrm{Tr}\left (- \theta\, \mathrm{ad}_\mu \mathrm{ad}_\frac{\partial h}{\partial \mu} + \theta\, \mathrm{ad}_\mu\mathrm{ad}_\mu\frac{\partial^2h }{\partial \mu^2} +  \frac12\sum_i \sigma^2 \mathrm{ad}_{e_i}\mathrm{ad}_{e_i} \right )\\
		&= |\epsilon| \theta h(\mu)+\theta A(\mu,\mu)  -  \frac 12|\epsilon| n \sigma^2, 
		\label{Aterm}
	\end{align}
	 where $n$ is the number of $\sigma_i$ fields. 
	 The $A(\mu,\mu)$ term depends on the Lie algebra structure constants, and is difficult to obtain explicitly for every compact semi-simple Lie algebra. 
	 However, we can estimate it here using
	 \begin{align}
		  |\epsilon |\theta \kappa(\mu,\mu) \mathbb I^{-1}_\mathrm{max}\geq \theta \mathrm{Tr}\left ( \theta \mathrm{ad}_\mu \mathrm{ad}_\mu \frac{\partial^2 h}{\partial \mu^2}\right )\geq |\epsilon |\theta \kappa(\mu,\mu)\mathbb I^{-1}_\mathrm{min}. 
		 \label{A-estimate}
	 \end{align}
	 Collecting terms then gives a lower and upper bound for the sum of the Lyapunov exponents \eqref{sum-lambdas}.
\end{proof}

Unfortunately, Proposition \ref{sum-lyap} only gives a negative lower bound for the sum of the Lyapunov exponents of an arbitrary compact semi-simple Lie algebra. 
Indeed, the last term can be bounded from above by 
\begin{align*}
	\mathbb E_\infty h(\mu) 
	= \int h(\mu) e^{-\frac{2\theta}{\sigma^2}h(\mu)}d\mu
	\leq c^2 \mathbb I_{\mathrm{min}}^{-1}.
\end{align*}
Thus, the sum of the Lyapunov exponents is found to be negative. 

A precise value can be computed explicitly for each Lie algebra, by using the structure constants to calculate the term $A(\mu,\mu)$ in \eqref{Aterm}. 
We will show this calculation in the case of $SO(3)$ in the free rigid body example in Section \ref{RB}. 

\begin{remark}[Non-compact Lie algebras]
This argument does not apply for non-compact semi-simple Lie algebras, as the Killing form is not sign-definite, thus it does not provide us with a norm. 
\end{remark}

Having only a negative lower bound for the sum of the Lyapunov exponent means we must proceed further by estimating the top Lyapunov exponent, in order to obtain an SRB measure. 
Obtaining a precise estimate for our general class of system is still an open problem, but we will show here the main steps toward this result. 
We will then numerically estimate it for the simple case of $SO(3)$ in the section \ref{RB}.

First recall the linearisation of the flow, in It\^o form
\begin{align}
	d\delta \mu = DF(\mu) \delta \mu\, dt + \sum_i DG_i(\mu)  \delta \mu \,dW_t^i
\end{align}
where $DG_i(\mu) = \mathrm{ad}_{\sigma_i}$.
We then want to apply the Furstenberg-Kasminskii formula. (See \cite{arnold1995random} for details.) For this purpose, we introduce the following change of variables 
\begin{align}
	R= \|\delta \mu\|\in \mathbb R\quad\mathrm{and}\quad  \Phi= \frac{\delta \mu}{\|\delta \mu\|} \in \mathbb S^{n-1},
\end{align}
where $\mathbb S^{n-1}$ is the ($n$-1)-sphere. 
The corresponding stochastic processes are
\begin{align}
	d R&= \langle \Phi, (DF(\mu)dt + DG_idW^i_t)\Phi\rangle R := Q(\mu,\Phi) R\,,\\	
	d \Phi&= (DF(\mu)dt + DG_idW^i) \Phi  - \langle \Phi, (DF(\mu)dt + DG_idW^i ) \Phi\rangle \Phi\nonumber \\
	&:= P(\mu,\Phi)dt + \sum_i P_i(\mu,\Phi)dW_t^i\, .
\end{align}
As a direct consequence of ergodicity and the multiplicative ergodic theorem \ref{MET} [Item 1 and 2], the Furstenberg-Kasminskii formula then gives the top Lyapunov exponent by evaluating the following integral
\begin{align}
	\lambda_+ = \int Q(\mu,\Phi) \mathbb Q(\mu,\Phi)d\mu d \Phi,
	\label{FK-formula}
\end{align}
where $\mathbb Q(\mu,\Phi)$ is the joint invariant distribution of the processes for $\mu$ and $\Phi$, where the processes for $\Phi$ depend on $\mu$, but not the reverse. 

One can see that a rough estimate of the integrand from below would always be negative. Consequently, we need more work to obtain a positive bound.  
Let us evaluate the quantity $Q$ at the positions $(\mu_i,\Phi_j)$, where we use the basis corresponding to the eigenvalues of the Hessian of $h$, i.e., the inverse of the moment of inertia $\mathbb I^{-1}$. 
We also order the eigenvalues as $\mathbb I_1>\ldots >\mathbb I_n$. 
Note that because the dynamics of $\mu$ takes place on the coadjoint orbit, the linearisation is tangent to it and thus $i\neq j$ for the choice of positions $(\mu_i,\Phi_j)$. 
A direct calculation gives the following simplifications of the quantity $Q(\mu_i,\Phi_j)$
\begin{align*}
	Q(\mu_i,\Phi_j)&= \langle \Phi_j, DF(\mu_i)\Phi_j\rangle \\
	&=  \langle \mathrm{ad}_{\mu_i}\Phi_j, \mathbb I^{-1} \Phi_j\rangle-\frac12\sigma^2\sum_k  \langle \mathrm{ad}_{e_k}\Phi_j,\mathrm{ad}_{e_k}\Phi_j\rangle \\
	&+ \theta\, \langle \mathrm{ad}_{\mu_i}\Phi_j,\mathrm{ad}_{\mathbb I^{-1}\mu_i}\Phi_j\rangle 
	- \theta\, \langle \mathrm{ad}_{\mu_i}\Phi_j,\mathrm{ad}_{\mu_i}\mathbb I^{-1} \Phi_j\rangle\\ 
	&=  \langle \mu_i,\mathrm{ad}_{\Phi_j} \mathbb I^{-1} \Phi_j\rangle-\frac12\sigma^2\sum_k \| \mathrm{ad}_{e_k}\Phi_j\|^2 \\
	&+ \theta\, \langle \mathrm{ad}_{\mu_i}\Phi_j,\mathrm{ad}_{\mu_i}(\mathbb I^{-1}_i\mathrm{Id}-\mathbb I^{-1}) \Phi_j\rangle\\ 
	&=  -\frac12\sigma^2\sum_k  \| \mathrm{ad}_{e_k}\Phi_j\|^2+ \theta\, (\mathbb I^{-1}_i -\mathbb I^{-1}_j)\|\mathrm{ad}_{\mu_i}\Phi_j\|^2\\
	&=  -\frac{n-1}{2}\sigma^2  +  \theta\, c^2(\mathbb I^{-1}_i -\mathbb I^{-1}_j),
\end{align*}
where we have used the Casimir sphere radius $c$  (thus $\mu_i = c\, e_i$) and the fact that $\|\Phi\|^2= 1$. 
The last formula conveys a lot of information about the possible sign for $\lambda_+$. 
Indeed, depending  on the choice of $i$ and $j$, it is possible that $Q(\mu_i,\Phi_j)$ is positive, provided the difference between the moments of inertia is large enough with respect to the $\sigma$, $\theta$ and $c$. 
That difference between the moments of inertia is important and is tied to the nature of the random attractor. 
Indeed, differences in the moments of inertia imply different speeds for nearby orbits, and thus a shear effect. 
This shear effect is a common source of random attractors, which has appeared in a number of recent papers such as \cite{wang2003strange, lin2008shear}. 
The necessity of sufficiently large shear for the existence of the random attractor is thus implied by the last formula in the computation above. 
Of course, this is not the whole story, as the original dynamics of the system is also an important factor. 
Indeed, without noise and dissipation, the system is Hamiltonian. Thus the sum of Lyapunov exponents vanishes in this case, and the top Lyapunov exponent must be positive for a large set of initial conditions. 
In the present case, integrating the deterministic part of $Q$ against the Gibbs measure is already analytically difficult and no particular sign can be easily expected from examining this term.
The only thing we can expect at this stage is that for large dissipation, when the Gibbs measure is localised around the equilibrium positions of minimum energy, the dynamics of the deterministic system is negligible, whereas for a small dissipation the deterministic dynamics will be important. 
We refer to section \ref{RB}, where we will evaluate the top Lyapunov exponent in the rigid body example by numerical simulation.  

We now turn to the semidirect product structure and also estimate the sum of the Lyapunov exponents in the following proposition, where we choose to use the Casimir $C(\mu,q)= \frac{1}{\epsilon}\kappa(q,q)=c^2$ for the dissipative term for simplicity only. 
\begin{proposition}\label{LEs-SP}
	The sum of the Lyapunov exponents for the semidirect stochastic process \eqref{SP-SD} with Casimir $C(\mu,q)= \frac{1}{\epsilon}\kappa(q,q)=c^2$ is given by
	 \begin{align}
		 \sum_i \lambda_i \geq - |\epsilon|\, n\sigma^2  -\theta c^2 \mathbb I_\mathrm{min}^{-1}.
		 \label{LEbound-SP}
	\end{align}
\end{proposition}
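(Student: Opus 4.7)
The plan is to follow the strategy of Proposition~\ref{sum-lyap}, but carried out block by block on the semidirect Lie algebra $\mathfrak{g}\circledS V$. The key simplification is that with Casimir $C(\mu,q)=\tfrac{1}{\epsilon}\kappa(q,q)$ one has $\partial C/\partial \mu = 0$ and $\partial C/\partial q \propto q$, so only the $\mu$--equation carries a dissipative term, namely $\theta\,\mathrm{ad}^*_{\partial_q C}(\mathrm{ad}_{\partial_q C}\xi)^\flat$ in \eqref{SP-SD}, while the $q$--equation is purely transport plus noise.

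First I would cast \eqref{SP-SD} in It\^o form, writing schematically $d(\mu,q) = F(\mu,q)\,dt + \sum_i G_i(\mu,q)\,dW_t^i$ on $\mathfrak{g}^* \oplus V^*$. By the multiplicative ergodic theorem (Theorem~\ref{MET}) together with Jacobi's formula and ergodicity of the joint process (as in the derivation of \eqref{sum-lyap-formula}), one obtains
\begin{align*}
\sum_i \lambda_i \;=\; \int \mathrm{Tr}\bigl(DF(\mu,q)\bigr)\,\mathbb{P}_\infty(\mu,q)\,d\mu\,dq,
\end{align*}
where the block Jacobian $DF$ splits as $\mathrm{Tr}(DF) = \mathrm{Tr}(D_\mu F_\mu) + \mathrm{Tr}(D_q F_q)$ since the off-diagonal blocks do not contribute to the trace.

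Next I would evaluate each block. The transport pieces $\mathrm{ad}^*_\xi \mu + \mathrm{ad}^*_r q$ and $\mathrm{ad}^*_\xi q$ contribute zero, exactly as in Proposition~\ref{sum-lyap}: trace of $\mathrm{ad}$ vanishes on a semi-simple Lie algebra and the coadjoint action is divergence-free. Each Stratonovich noise term produces an It\^o correction $\tfrac12\,\mathrm{ad}^*_{\sigma_i}\mathrm{ad}^*_{\sigma_i}(\cdot)$ whose derivative is the constant operator $\tfrac12\,\mathrm{ad}^*_{\sigma_i}\mathrm{ad}^*_{\sigma_i}$; summing over $\sigma_i = \sigma e_i$ spanning $\mathfrak g$ and applying the Killing identity $\mathrm{Tr}(\mathrm{ad}_{e_i}\mathrm{ad}_{e_i}) = \epsilon$ gives a net noise contribution of $-|\epsilon|n\sigma^2$ (the factor $2$ relative to Proposition~\ref{sum-lyap} coming from the noise acting both on $\mu$ and on $q$). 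Finally, differentiating the dissipation term $-\theta\,\mathrm{ad}^*_{(2/\epsilon)q}(\mathrm{ad}_{(2/\epsilon)q}\mathbb{I}^{-1}\mu)^\flat$ in $\mu$ produces a linear operator whose trace, by the same kind of Killing-form manipulation as in \eqref{Aterm}--\eqref{A-estimate}, can be bounded below in an $\mathbb{I}$-eigenbasis using $\|q\|^2 = |\epsilon| c^2$ (the Casimir fixing the length of $q$) by $-\theta c^2 \mathbb{I}_{\min}^{-1}$. Adding the noise and dissipation contributions and dropping the non-negative leftover (in the same spirit as the $\mathbb{E}_\infty h(\mu)$ term being dropped in Proposition~\ref{sum-lyap}) yields \eqref{LEbound-SP}.

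The main obstacle I anticipate is Step~4: the trace $\mathrm{Tr}(\mathrm{ad}_q \mathrm{ad}_q\mathbb{I}^{-1})$ is the delicate one, because $\mathbb{I}^{-1}$ does not commute with $\mathrm{ad}_q$, so one cannot simply invoke the Killing identity. The right approach is to work in an eigenbasis of $\mathbb{I}^{-1}$, write $\mathrm{ad}_q\mathrm{ad}_q\mathbb{I}^{-1}$ in components, and then bound the resulting bilinear form using $\|q\|^2 = |\epsilon|c^2$ together with the extremal eigenvalue $\mathbb{I}_{\min}^{-1}$, matching the template of \eqref{A-estimate}. Everything else is a routine transcription of the argument of Proposition~\ref{sum-lyap} to the block-diagonal setting.
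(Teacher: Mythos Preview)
Your proposal is correct and follows essentially the same route as the paper: cast \eqref{SP-SD} in It\^o form, apply the MET plus Jacobi's formula and ergodicity to reduce $\sum_i\lambda_i$ to $\int \mathrm{Tr}(D_\mu F^\mu + D_q F^q)\,\mathbb P_\infty$, observe that transport and noise-Jacobians have vanishing trace, and bound the remaining dissipative trace via \eqref{A-estimate}.

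One small correction: there is no ``non-negative leftover'' to drop here. Because $\partial C/\partial \mu = 0$ for this Casimir, the only dissipative term in $F^\mu$ is $\theta\,\mathrm{ad}^*_{\partial_q C}(\mathrm{ad}_{\partial_q C}\,\mathbb I^{-1}\mu)^\flat$, which is \emph{linear} in $\mu$; hence $D_\mu$ of it is exactly the single operator $\theta\,\mathrm{ad}_q\mathrm{ad}_q\,\mathbb I^{-1}$ (up to normalisation). This is structurally simpler than Proposition~\ref{sum-lyap}, where the Casimir depended on $\mu$ and differentiating $\mathrm{ad}_\mu\mathrm{ad}_\mu\xi$ produced the extra $-\theta\,\mathrm{ad}_\mu\mathrm{ad}_{\partial h/\partial\mu}$ term responsible for the $\mathbb E_\infty h(\mu)$ contribution. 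So the bound \eqref{LEbound-SP} comes out directly, not by discarding a positive piece.
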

\begin{proof}
We follow closely the proof for the Euler-Poincar\'e case. 	
Let us denote the stochastic process \eqref{SP-SD} in It\^o form by
	\begin{align*}
		d(\mu,q) = \left [F^\mu(\mu,q) +F^q(\mu,q)\right ]dt + \sum_i\left [G_i^\mu(\mu,q)+G_i^q(\mu,q)\right ]dW_t^i,
	\end{align*}
	where we denoted $F^\mu$ (resp. $F^q$) the $\mu$ (resp. $q$) component of $F$.
	The MET theorem, Jacobi's formula and ergodicity of this process gives
	\begin{align}
		\sum_i\lambda_i = \int \mathrm{Tr}\left (D_\mu F^\mu (\mu,q) + D_qF^q(\mu,q)\right ) \mathbb P_\infty(\mu,q) d(\mu,q), 
	\end{align}
	where $\mathbb P_\infty(\mu,q)$ is the invariant measure of the underlying stochastic process, and $D_\mu$ and $D_q$ denotes the Jacobian matrices taken with respect to $\mu$ or $q$ respectively.   
	Consequently, after substituting the Casimir $C(q)$ in the general formula \eqref{SP-SD}  we obtain
\begin{align*}
	\mathrm{Tr}(D_\mu F^\mu+D_qF^q) = \mathrm{Tr}\left ( -\theta\,  \mathrm{ad}_q\mathrm{ad}_q\mathbb I^{-1}   -\sigma^2 \sum_i\mathrm{ad}_{e_i}\mathrm{ad}_{e_i} \right ), 
\end{align*}
and, using again \eqref{A-estimate}, we have the result \eqref{LEbound-SP}. 
\end{proof}

As before, the sum of the Lyapunov exponents is negative. 
Thus, we must estimate the top Lyapunov exponent in order to prove the existence of a non-singular SRB measure for this system. 
The same difficulty as before remains in this case for estimating the top Lyapunov exponent using the Furstenberg-Kasminskii formula. 

\paragraph{\bf Summary.} In this section, we have studied the interaction of multiplicative noise and nonlinear dissipation on coadjoint orbits. 
For this purpose, we added a double bracket dissipation mechanism to the previously derived  stochastic process in order to preserve the coadjoint orbit structure on which the solutions of the stochastic process are supported. 
In the case of semi-simple Lie algebras, we obtained the invariant measure of the Fokker-Planck equation and found the associated Gibbs measure on the coadjoint orbits.  
In the semidirect product case, this result was shown to hold for the marginal distribution of the advected quantity only, where the Gibbs distribution depends only on the potential energy. 
We then proved the existence of random attractors for a wide class of systems by using the dissipative property of the double bracket and the H\"ormander condition on the generating vector fields, provided the top Lyapunov exponent is positive. 
Unfortunately, we were not able to derive an exact lower positive bound for the top Lyapunov exponent. However, numerical investigations in the example sections \ref{RB} and \ref{HT} will provide us with strong evidences of the positivity of the top Lyapunov exponent for some region of the parameter space $(\theta, \sigma)$. 
In the next two sections we will study two specific examples of stochastic deformations of the Euler-Poincar\'e dynamical equation, for the free rigid body and the heavy top, using both analytical and numerical tools. 

\section{Euler-Poincar\'e example: the stochastic free rigid body}\label{RB}

This section introduces stochastic dynamics for the classic example of the Euler-Poincar\'e dynamical equation; namely, the equation for free rigid body motion. 
Stochastic rigid body models have arisen in various fields of application, such as nanoparticles \cite{shrestha2015simulation,blum2006measurement}, molecular biology \cite{gordon2009algorithm}, polymer dynamics \cite{chirikjian2012stochastic2}[Section 13.7], filtering in aeronautics: guidance and tracking \cite{willsky1974estimation}. 
We refer to \cite{chirikjian2012stochastic1,chirikjian2012stochastic2} for more applications. 
One source of models for stochastic dynamics stems from the so-called rotational Brownian motion of molecules. Rotational Brownian motion comprises the random change in the orientation of a polar molecule due to collisions with other molecules and is an important element in the theory of dielectric materials. 
Perrin and Debye's non-inertial theories are the most well-known models, see for example \cite{chirikjian2012stochastic2}[section 16.3]. 
Rotational Brownian motions have also been observed in a laboratory setting and have been properly documented in \cite{han2006brownian}. 
Much of the current research in rotational Brownian motions has been devoted to inertial models, non-spherical molecules and possibility of dipole-dipole interactions. 
Walter et al. \cite{walter2010stochastic} took a step further in proposing an inertial, Langevin type of generalisation to the rigid body equations aiming at studying systems of rigid bodies as models for polymer dynamics. 
The coupling between linear and rotational dynamics was important in this case, to capture the motion features of long polymeric chains. 
Their models assume linearity in the noise for both linear and angular momentum variables, whereas the model used here is fully nonlinear with multiplicative noise and preserves strong geometrical features such as the coadjoint orbits. 

\begin{remark}[The LLG equation]
We mention that the stochastic Landau-Lipschitz-Gilbert (LLG) equation studied for example in \cite{garanin1997fokker, brzezniak2012large,kohn2005magnetic} has the same structure as our stochastic dissipative rigid body equation.
Indeed, we preserve the coadjoint orbit, thus the amplitude of the momentum variable, which corresponds to the strength of magnetic moments in the LLG model. 
We will not study this link further here as the LLG equation is a PDE in two or three dimensions and requires different analytical methods than the rigid body equation.  
\end{remark}

\subsection{The stochastic rigid body}
The canonical example for illustrating the Euler-Poincar\'e reduction by symmetry is the free rigid body, whose configuration space is the group of rotations $SO(3)$.  
For a complete treatment from the viewpoint of reduction we refer to \cite{marsden1999intro}, 
For simplicity here, we rely on the isomorphism $\mathfrak{ so}(3)\cong \mathbb R^3$ which translates the commutator in the Lie algebra to the cross product of three-dimensional vectors, via $[A , B ] \to \boldsymbol A\times\boldsymbol B$, where $\mathbb R^3$ vectors are denoted with bold font.
This map allows us to use a slightly different Killing form than the canonical one. Namely, we shall use the scalar product as our pairing, via the formula $\boldsymbol A\cdot \boldsymbol B= -\frac12 \kappa(A,B)$. 

The reduced Lagrangian of the free rigid body is written in terms of the angular velocity $\Omega\in \mathfrak{so}(3)$ and a prescribed moment of inertia $\mathbb I\in \mathrm{Sym}(3)$ as
\begin{align}
	l(\boldsymbol \Omega) = \frac12 \boldsymbol \Omega \cdot\mathbb{ I} \boldsymbol \Omega
	: = \frac12 \boldsymbol \Omega\cdot \boldsymbol \Pi\,,
	\label{H-RB}
\end{align}
where the angular momentum $\boldsymbol \Pi$ is defined accordingly and the Legendre transform gives the reduced Hamiltonian $h(\boldsymbol \Pi) = \frac12 \boldsymbol \Pi \,\mathbb I ^{-1}\boldsymbol \Pi$.
We take the stochastic potential to be linear in the momentum variable $\boldsymbol \Pi$
\begin{align}
	\Phi_i(\boldsymbol \Pi) = \sum_{i=0}^3 \boldsymbol{\sigma}_i\cdot \boldsymbol\Pi\,,
	\label{Phi-RB}
\end{align}
where the constants $\boldsymbol{\sigma}_i$ generically span $\mathbb R^3$ but can be chosen in various ways. 
The stochastic process for $\boldsymbol \Pi$ is then computed from \eqref{SEP} to be 
\begin{align}
	d\boldsymbol\Pi + \boldsymbol\Pi\times \boldsymbol\Omega\, dt + \sum_i\boldsymbol\Pi\times \boldsymbol{\sigma}_i \circ dW^i_t=0 ,
	\label{Sto-RB-stra}
\end{align}
and the corresponding It\^o process is
\begin{align}
	d\boldsymbol\Pi + \boldsymbol\Pi\times \boldsymbol\Omega\, dt +\frac{1}{2} \sum_i(\boldsymbol\Pi\times \boldsymbol{\sigma}_i)\times \boldsymbol{\sigma}_i\, dt +  \sum_i \boldsymbol\Pi\times \boldsymbol{\sigma}_i\,  dW^i_t= 0. 
	\label{Sto-RB-ito}
\end{align}

The coadjoint orbit defined by a level set of the quadratic Casimir $\|\boldsymbol\Pi\|^2=\mathrm{c}^2$ is preserved  in our geometrical construction, as may be checked by a direct computation in both the Stratonovich and the It\^o stochastic representations. 
Although the Casmir is conserved, the energy $h(\boldsymbol\Pi) = l(\boldsymbol\Omega)$ is not a conserved quantity in general.  
Indeed, since the moment of inertia $\mathbb I$ is a symmetric matrix, the stochastic process associated to $h$ can be found to be 
\begin{align}
	dh= \sum_i(\boldsymbol \Pi\times \boldsymbol{\sigma}_i)\cdot [\mathbb I^{-1}(\boldsymbol \Pi\times \boldsymbol{\sigma}_i)-(\boldsymbol \Omega\times \boldsymbol{\sigma}_i) ]\, dt+2\sum_i   (\boldsymbol \Pi\times \boldsymbol{\sigma}_i)\cdot \boldsymbol \Omega\,  dW_t^i.
	\label{SDE-energy}
\end{align}
In the general case, one only has bounds for the energy given by the two stable equilibrium positions of the rigid body, namely $E_\mathrm{min} = \frac {1}{2\mathrm{I_3}}|\Pi_3(0)|^2$ and $E_\mathrm{max} = \frac {1}{2\mathrm{I_1}}|\Pi_1(0)|^2$ if $I_1\leq I_2\leq I_3$. 
Thus, the energy may randomly fluctuate within these bounds. 

Apart from the obvious case of $\mathbb I=Id$, one can check that the system with $\mathbb I= (I_1,I_1,I_3)$ and  $\boldsymbol \sigma= (0,0,\sigma_3)$ conserves the energy for every values of $I_1,I_3$ and $\sigma_3$ . 
In this case, the stochastic rigid body reduces to the Kubo oscillator of \cite{kubo1991statistical}
\begin{align*}
	d\Pi_1 = \Pi_2(a\Pi_3  dt +  \chi_3\circ dW), \quad d\Pi_2 =  -\Pi_1(a\Pi_3 dt +  \chi_3\circ dW)\quad \mathrm{and}\quad  d\Pi_3 =0,  
\end{align*}
where $a:= \frac{I-I_3}{I\, I_3}$. 
This system is integrable by quadratures and a solution is 
\begin{align*}
	\Pi_1(t) &= \Pi_1(0)\, \mathrm{cos}(\gamma t + \chi_3 W_t) - \Pi_2(0)\, \mathrm{sin}(\gamma t + \chi W_t), \\
	\Pi_2(t) &= \Pi_2(0)\, \mathrm{cos}(\gamma t + \chi_3 W_t) + \Pi_1(0)\, \mathrm{sin}(\gamma t + \chi W_t),
\end{align*}
where $\gamma:= a \Pi_3$.
Although the deterministic free rigid body is integrable, the only known integrable stochastic rigid body is this particular case, which reduces to the Kubo oscillator. 

\subsection{Fokker-Planck equation}

The Fokker-Planck equation of the process \eqref{Sto-RB-stra} is simply given for a probability density $\mathbb P$ by 
\begin{align}
	\frac{d}{dt}\mathbb P + (\boldsymbol\Pi\times \boldsymbol\Omega) \cdot \nabla \mathbb P  +\frac12\sum_i (\boldsymbol\Pi\times \boldsymbol{\sigma}_i)\cdot \nabla[(\boldsymbol\Pi\times \boldsymbol \sigma_i) \cdot \nabla \mathbb P ]=0 \,,
\label{FPRB}
\end{align}
where $\nabla:=\nabla_{\boldsymbol\Pi}$ is the gradient with respect to the independent variable  $\boldsymbol\Pi\in\mathbb{R}^3$.
According to Theorem \ref{limit-thm}, the invariant, or limiting distribution $\mathbb P_\infty$ is constant on coadjoint orbits, which are spheres.  

Based on this result, more can be said about the energy evolution of the stochastic rigid body, without embarking on any deeper studies into the coupled stochastic processes \ref{SDE-energy} and \eqref{Sto-RB-stra}. 
For example, by ergodicity of \eqref{Sto-RB-stra}, the long time average of the stochastic rigid body motion follows the limiting distribution $\mathbb P_\infty$. 
In terms of energy, the distribution is not uniform, but will be proportional, at a given energy, to the length of the deterministic trajectory of the rigid body with this energy. 
The energy will thus randomly oscillate between two bounds, with maximum probability to be near the energy of the unstable equilibrium.

\subsection{Double bracket dissipation}
The double bracket dissipation for the rigid body involves the only Casimir $\|\boldsymbol \Pi\|^2$ and yields, with noise, the dissipative stochastic process 
\begin{align}
	d\boldsymbol\Pi + \boldsymbol\Pi\times \boldsymbol\Omega\,  dt + \theta\, \boldsymbol \Pi\times( \boldsymbol \Pi\times \boldsymbol \Omega)\, dt+ \sum_i\boldsymbol\Pi\times \boldsymbol{\sigma}_i \circ dW^i_t=0 .
	\label{RB-diss}
\end{align}
The It\^o formulation is similar to \eqref{Sto-RB-ito} and will not be written here. The corresponding 
the Fokker-Planck equation is 
\begin{align}
	\begin{split}
	\frac{d}{dt}\mathbb P& + (\boldsymbol\Pi\times \boldsymbol\Omega)\cdot  \left ( \nabla \mathbb P - \theta\, \boldsymbol \Pi \times  \nabla \mathbb P\right )+\frac12\sum_i (\boldsymbol\Pi\times \boldsymbol{\sigma}_i)\cdot \nabla[(\boldsymbol\Pi\times \boldsymbol \sigma_i) \cdot \nabla \mathbb P ]=0 .
	\end{split}
	\label{FP-RB-SD}
\end{align}

The Fokker-Planck equation for stochastic rigid body dynamics with selective decay may be found by specialising the general proof given for Theorem \ref{FP-diss-thm}. 
Indeed, we can rewrite the Fokker-Planck equation as 
\begin{align}
	\frac{d}{dt}\mathbb P + (\boldsymbol\Pi\times \boldsymbol\Omega) \cdot \nabla \mathbb P + \nabla\cdot \left (  \theta\, \boldsymbol \Pi \times (\boldsymbol\Pi\times \boldsymbol\Omega) \mathbb P   -\frac12\sigma^2\, \boldsymbol\Pi\times( \boldsymbol\Pi\times \nabla \mathbb P )\right )=0 \,,
\end{align}
where we have used $\nabla\cdot (\boldsymbol\Pi\times (\boldsymbol\Pi\times \boldsymbol\Omega))= 0$. The last term in \eqref{FP-RB-SD} simplifies as
\begin{align*}
	\sum_i (\boldsymbol\Pi\times \boldsymbol e_i) [( \boldsymbol\Pi\times  \boldsymbol e_i)\cdot \nabla\mathbb P] 
	&=  \sum_i (\boldsymbol\Pi\times \boldsymbol e_i) [( \nabla\mathbb P \times \boldsymbol\Pi)\cdot  \boldsymbol e_i] 
	= \boldsymbol\Pi\times (\nabla\mathbb P\times \boldsymbol\Pi), 
\end{align*}
since the sum over $i$ is simply the decomposition of the vector $( \nabla\mathbb P \times \boldsymbol\Pi)$ into its $\boldsymbol e_i$ components. 
Consequently, the asymptotic equilibrium solution tends to 
\begin{align}
	\mathbb P_\infty = Z^{-1}e^{-\frac{2\theta}{\sigma^2}h(\Pi)}\,,
\end{align}
in which the overall sign of the exponential argument is negative, since $\theta>0$. 

\subsection{Random attractor}
For $\mathfrak{so}(3)$, we can go beyond Theorem \ref{sum-lyap} to obtain the exact value of the sum of the Lyapunov exponents.
\begin{proposition}
	 The sum of the Lyapunov exponents can be given exactly as 
	 \begin{align}
		 \sum_i \lambda_i = -3\sigma^2  - \theta\left (  c^2 \mathrm{Tr}\,\mathbb I^{-1}-6 \mathbb E_\infty h\right ) ,
	\label{sum-lambdas-so3}
	\end{align}
	where $c$ is the value of the Casimir function,  and $\theta>0$. 
\end{proposition}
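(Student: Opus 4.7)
The plan is to follow exactly the calculation in the proof of Proposition \ref{sum-lyap}, but to replace the two-sided estimate \eqref{A-estimate} on the term $A(\mu,\mu)=\mathrm{Tr}\bigl(\mathrm{ad}_\mu \mathrm{ad}_\mu \mathbb I^{-1}\bigr)$ by an exact evaluation, which is possible because $\mathfrak{so}(3)$ is only three-dimensional and its $\mathrm{ad}$ action admits a closed-form ``double cross product'' identity.

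First I would fix the Killing constant for $\mathfrak{so}(3)$ in the conventions of this section. Using $\mathrm{ad}_A B \leftrightarrow \boldsymbol A\times \boldsymbol B$ and computing a single structure matrix (for example $\mathrm{ad}_{e_1}$ in the standard basis, whose square has trace $-2$) one finds $\mathrm{Tr}(\mathrm{ad}_A \mathrm{ad}_B) = -2\,\boldsymbol A\cdot\boldsymbol B$, i.e.\ $\epsilon=-2$ and hence $|\epsilon|=2$, consistent with the pairing convention $\boldsymbol A\cdot\boldsymbol B=-\tfrac12\kappa(A,B)$ used throughout Section \ref{RB}. This immediately gives the noise contribution $\tfrac12\sigma^2\sum_{i=1}^3 \mathrm{Tr}(\mathrm{ad}_{e_i}\mathrm{ad}_{e_i})=-3\sigma^2$, which is the first term on the right-hand side of \eqref{sum-lambdas-so3}.

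Next, the key step. I would compute $A(\mu,\mu)$ exactly by using the $\mathfrak{so}(3)$ identity $\mathrm{ad}_\mu \mathrm{ad}_\mu v=\boldsymbol\mu\times(\boldsymbol\mu\times \boldsymbol v)=(\boldsymbol\mu\cdot \boldsymbol v)\boldsymbol\mu-\|\boldsymbol\mu\|^2\boldsymbol v$. In matrix form this reads $\mathrm{ad}_\mu\mathrm{ad}_\mu=\boldsymbol\mu\boldsymbol\mu^{\!\top}-\|\boldsymbol\mu\|^2 I_3$, so that
\begin{equation*}
A(\mu,\mu)=\mathrm{Tr}\bigl((\boldsymbol\mu\boldsymbol\mu^{\!\top}-\|\boldsymbol\mu\|^2 I_3)\,\mathbb I^{-1}\bigr)=\boldsymbol\mu^{\!\top}\mathbb I^{-1}\boldsymbol\mu-\|\boldsymbol\mu\|^2\,\mathrm{Tr}\,\mathbb I^{-1}=2h(\boldsymbol\mu)-c^2\,\mathrm{Tr}\,\mathbb I^{-1},
\end{equation*}
where I used $h(\boldsymbol\mu)=\tfrac12\boldsymbol\mu^{\!\top}\mathbb I^{-1}\boldsymbol\mu$ and the Casimir $\|\boldsymbol\mu\|^2=c^2$. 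Likewise the drift contribution $-\theta\,\mathrm{Tr}(\mathrm{ad}_\mu \mathrm{ad}_{\partial h/\partial\mu})$ in the expression for $\mathrm{Tr}(DF(\mu))$ from the proof of Proposition \ref{sum-lyap} becomes, via $\mathrm{Tr}(\mathrm{ad}_A\mathrm{ad}_B)=-2\boldsymbol A\cdot\boldsymbol B$, equal to $-\theta(-2)\boldsymbol\mu\cdot\mathbb I^{-1}\boldsymbol\mu=4\theta h(\boldsymbol\mu)$.

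Finally I would assemble the three contributions into the exact identity
\begin{equation*}
\mathrm{Tr}\bigl(DF(\boldsymbol\mu)\bigr)=4\theta h(\boldsymbol\mu)+\theta\bigl(2h(\boldsymbol\mu)-c^2\,\mathrm{Tr}\,\mathbb I^{-1}\bigr)-3\sigma^2 = 6\theta h(\boldsymbol\mu)-\theta c^2\,\mathrm{Tr}\,\mathbb I^{-1}-3\sigma^2,
\end{equation*}
and integrate against the invariant measure $\mathbb P_\infty$ using ergodicity and the Jacobi/MET identity \eqref{sum-lyap-formula} exactly as in Proposition \ref{sum-lyap}. This yields $\sum_i\lambda_i=-3\sigma^2-\theta\bigl(c^2\,\mathrm{Tr}\,\mathbb I^{-1}-6\mathbb E_\infty h\bigr)$, which is \eqref{sum-lambdas-so3}. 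There is no genuine obstacle here beyond bookkeeping of signs and conventions; the one place to be careful is that the closed-form evaluation of $A(\mu,\mu)$ relies on the $\mathfrak{so}(3)$-specific double cross product identity, so the same exact formula is not expected to carry over verbatim to higher-dimensional compact semisimple algebras.
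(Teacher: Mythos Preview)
Your proof is correct and follows essentially the same route as the paper's. The only cosmetic difference is that the paper evaluates $A(\boldsymbol\Pi,\boldsymbol\Pi)=\mathrm{Tr}(\mathrm{ad}_\Pi\mathrm{ad}_\Pi\mathbb I^{-1})$ component-wise via the structure constants $c^{ij}_k=\epsilon_{ijk}$, obtaining $-A=\Pi_1^2(\mathbb I_2^{-1}+\mathbb I_3^{-1})+\Pi_2^2(\mathbb I_1^{-1}+\mathbb I_3^{-1})+\Pi_3^2(\mathbb I_1^{-1}+\mathbb I_2^{-1})$, whereas you use the vector triple-product identity to write $\mathrm{ad}_\mu\mathrm{ad}_\mu=\boldsymbol\mu\boldsymbol\mu^{\!\top}-\|\boldsymbol\mu\|^2 I_3$ directly; these are the same computation and both collapse to $A=2h-c^2\,\mathrm{Tr}\,\mathbb I^{-1}$.
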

\begin{proof}
	We can compute the term $A$ of Theorem \ref{sum-lyap} exactly with the structure constants $c^{ij}_k= \epsilon_{ijk}$
	 \begin{align*}
		 -A(\boldsymbol \Pi,\boldsymbol \Pi)&=-\mathrm{Tr}(\mathrm{ad}_\Pi\mathrm{ad}_\Pi\mathbb I^{-1})= -c^{im}_nc^{jn}_m\mathbb I^{-1} \Pi_i \Pi_j\\
		 &= \Pi_1^2(\mathbb I_2^{-1}+\mathbb I_3^{-1}) +  \Pi_2^2 ( \mathbb I_1^{-1}+\mathbb I_3^{-1})+\Pi_3^2 (\mathbb I_1^{-1}+\mathbb I_2^{-1}),
	 \end{align*}
	 which, when combined with the Hamiltonian, yields the result in equation \eqref{sum-lambdas-so3}. 
\end{proof}

We now turn to the numerical estimation of the top Lyapunov exponent for the stochastic damped rigid body. 
More explanation of the numerical scheme used can be found in the appendix \ref{numerics-lyap}. 
The result is displayed in figure \ref{fig:top-lyap} where we sampled $\theta$ and $\sigma$ with $0.1$ steps and used a spline interpolation for smoothing data. 
This result must not be taken to be exact, since, for example, the regions of large or small noise are the least accurate, as larger noise requires smaller time steps and a sufficiently small noise loses the ergodicity property sooner, as the simulations are run for a finite time. 
Nevertheless, these results numerically demonstrate that the top Lyapunov exponent is positive over a large region of the parameter space $(\theta,\sigma^2)$. 
Based on these numerical results, it is of course not possible to show that the observed chaos is not transient, but the longer runs suggest that the positive top Lyapunov exponent reaches a stable constant value. 
\begin{figure}[htpb]
	\centering
	\includegraphics[scale=0.6]{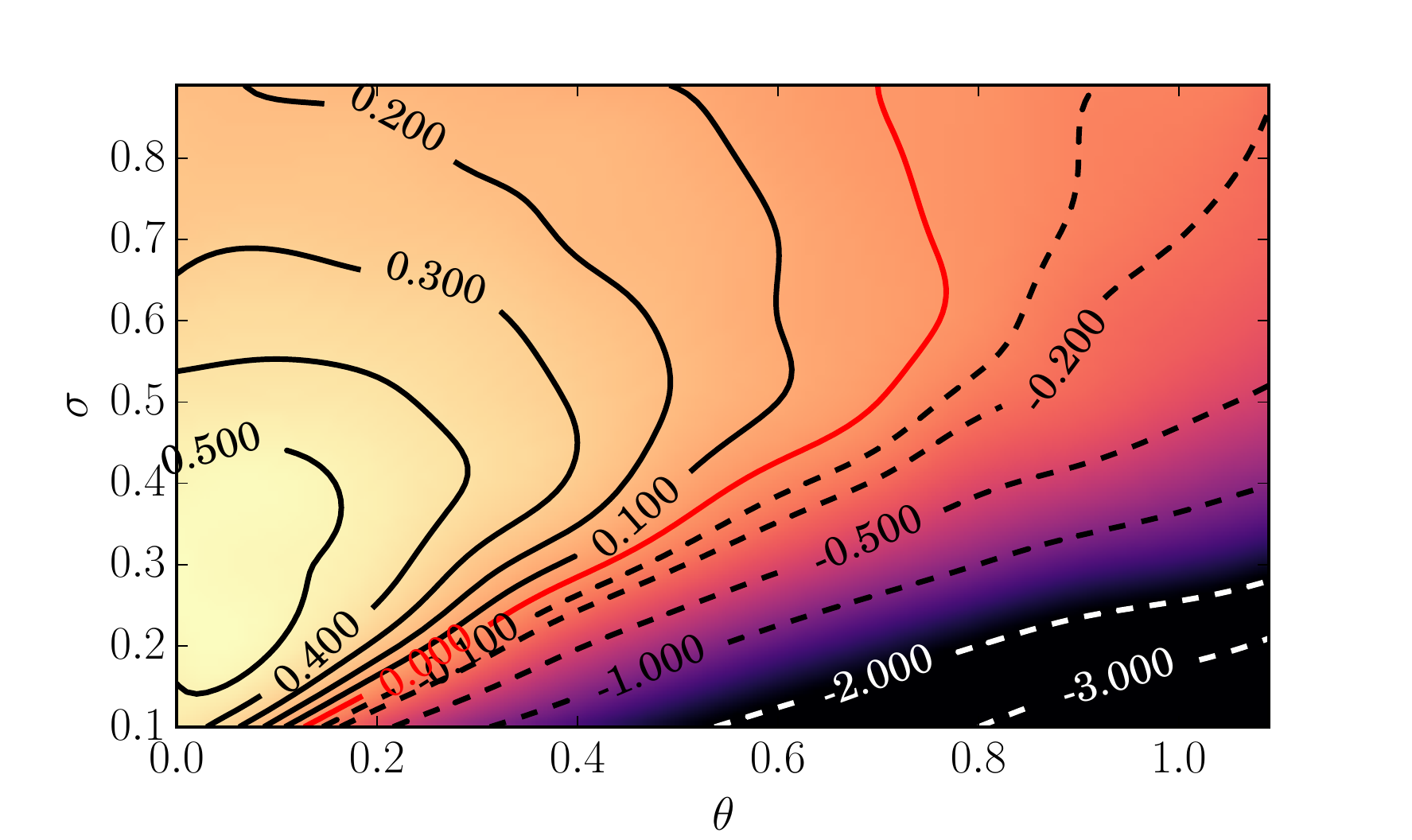}
	\caption{{\small This figure displays the value of the top Lyapunov exponent of the stochastic damped rigid body with $\mathbb I= (1,2,3)$ and $c=1$ in the parameter space $(\theta,\sigma)$. 
	This clearly shows a large region of positive Lyapunov exponent, implying chaotic behaviour in the system. See the text for a more detailed discussion. }}
	\label{fig:top-lyap}
\end{figure}
Apart from the demonstration of a positive top Lyapunov exponent, this figure also provides us with a better understanding of this system, which we now discuss briefly. 
\begin{itemize}
	\item For $\theta = 0$, the attractor is the entire space, that is the momentum sphere, and the invariant distribution is uniform on it. No random attractor or SRB measure exists in this case. 
		Nevertheless, we learn that by increasing the noise, we first observe an increase of the amount of chaos, and then a decrease. 
		The decrease for large noise is rather slow and it is not expected that the top Lyapunov exponent will ever become negative under further increase of the noise.
		This is because the shear of the rigid body is bounded by the difference between the two opposite moment of inertia. 
		In turn, the magnitude of the top Lyapunov exponent is bounded to the extent that it is linked to this shear. 
		This is in contrast with random attractors in the plane, or more generally in non-compact spaces, which can possess an arbitrarily large shear. 
		For example in the case of planar systems with limit cycles studied numerically by \cite{lin2008shear} and analytically by \cite{engel2016syncronisation}, the top Lyapunov is not bounded for large noise amplitudes.

	\item The limit $\sigma\to 0$ cannot be numerically computed, but it is easy to extrapolate it from this graphic. 
		First, notice that if $\theta=\sigma=0$ we are in the Hamiltonian case of rigid body dynamics, hence the sum of the Lyapunov exponents must be zero and the top Lyapunov exponent is therefore positive. 
		Furthermore, the Lyapunov exponents will depend on the initial conditions, as the system is not ergodic anymore, thus the limit $\sigma\to 0$ is not very well defined. 
		Nevertheless, if $\theta>0$, the top Lyapunov exponent converges to a single negative value. 
	\item Upon examining the plots for various values of the Lyapunov exponents, one notices that the dark region of negative Lyapunov exponent varies rapidly with the parameters $\theta$ and $\sigma$ whereas the rest of the plot shows slower variations. 
		It is interesting to remark that the slope of the line $\lambda^+=-1$, for example, is close to $0.4$, which is smaller than $1$. 
		That means that in order to balance an increase in the noise for some value, the damping must be increased by a larger value.  
\end{itemize}

To conclude, in light of this numerical result, and along with the theorem for existence of SRB measure, provided $\boldsymbol \sigma_i$ spans $\mathbb R^3$, the proper dissipation of energy will imply the existence of a non-singular random attractor.

The final analysis at this stage of the investigation concerns the nature of the random attractors of this system. 
From numerical simulations, we display in Figure \ref{fig:RB-RA} a realisation of a random attractor of the rigid body.\footnote{See \url{http://wwwf.imperial.ac.uk/~aa10213/} for a video of this random attractor.} 

\begin{figure}[h]
	\centering
	\subfigure{\includegraphics[scale=0.35]{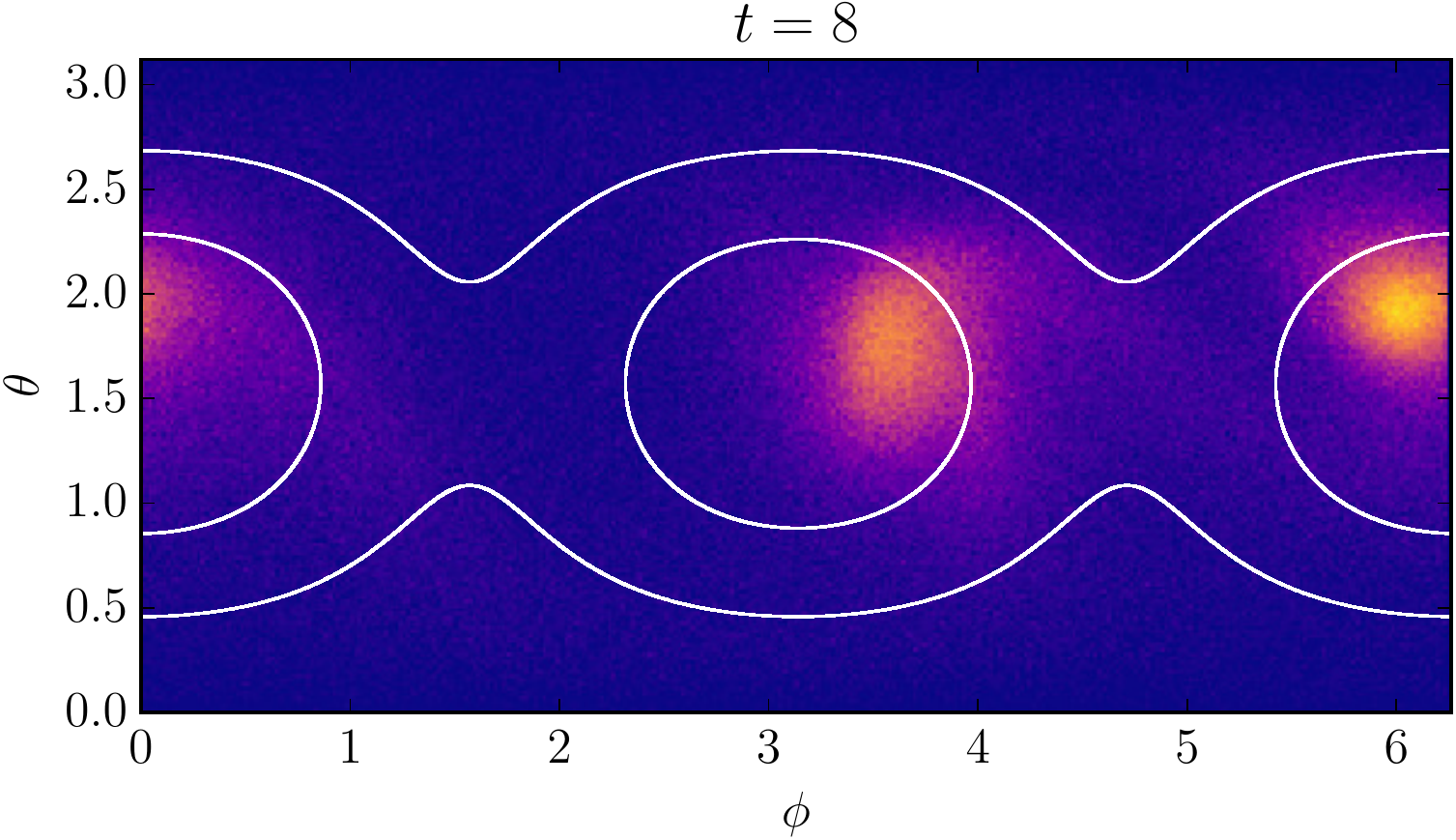}}
	\subfigure{\includegraphics[scale=0.35]{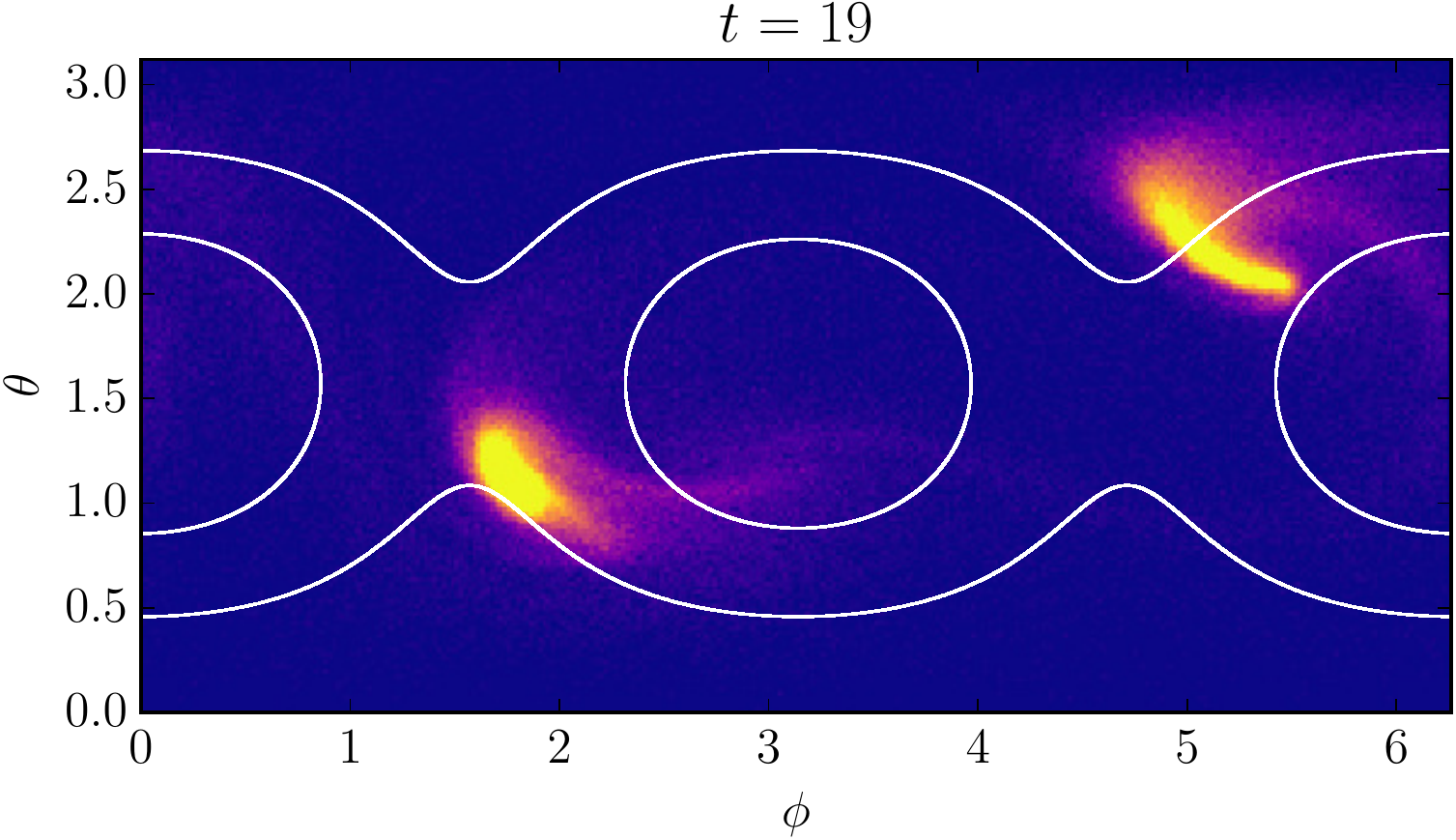}}
	\subfigure{\includegraphics[scale=0.35]{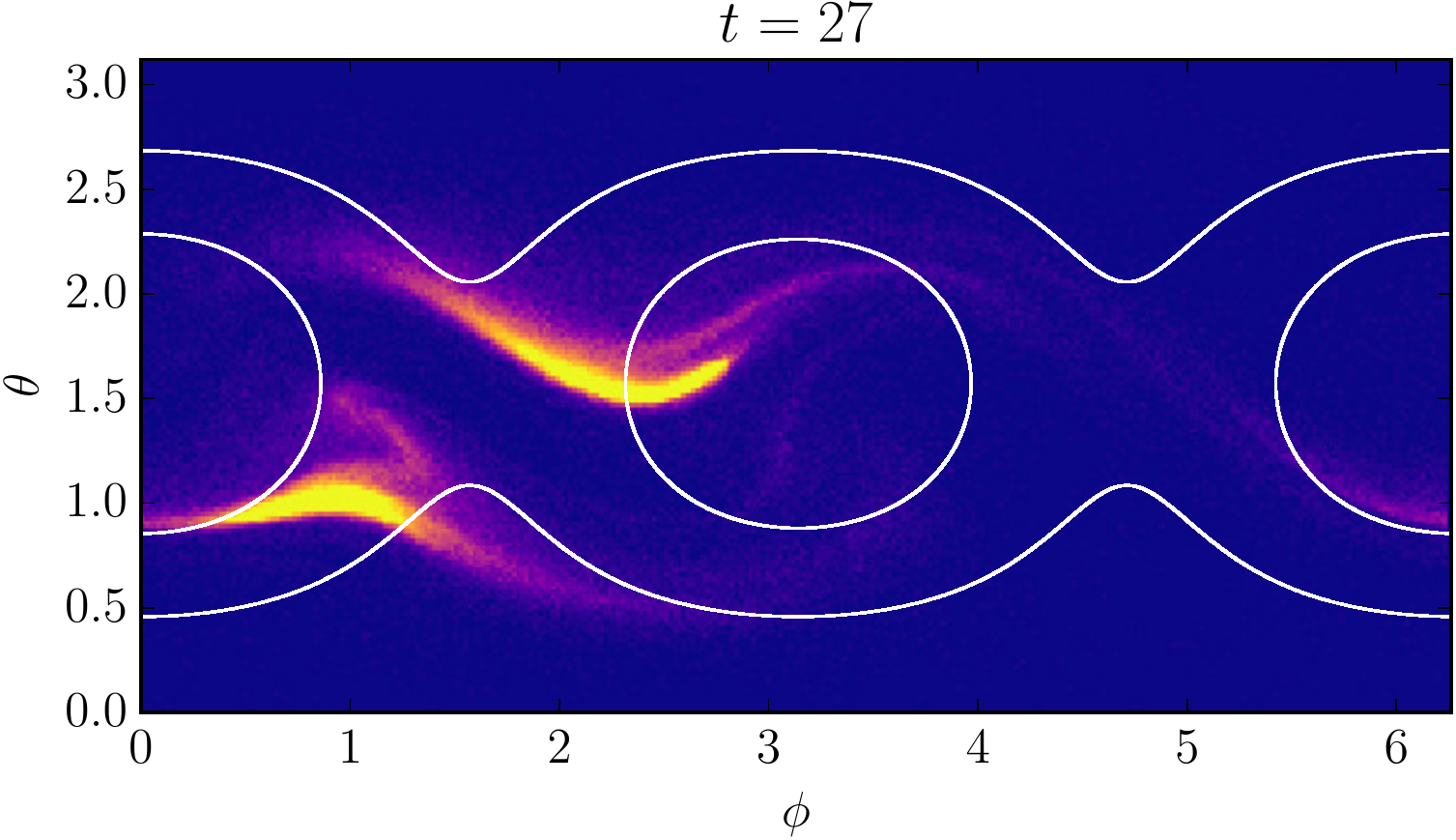}}
	\subfigure{\includegraphics[scale=0.35]{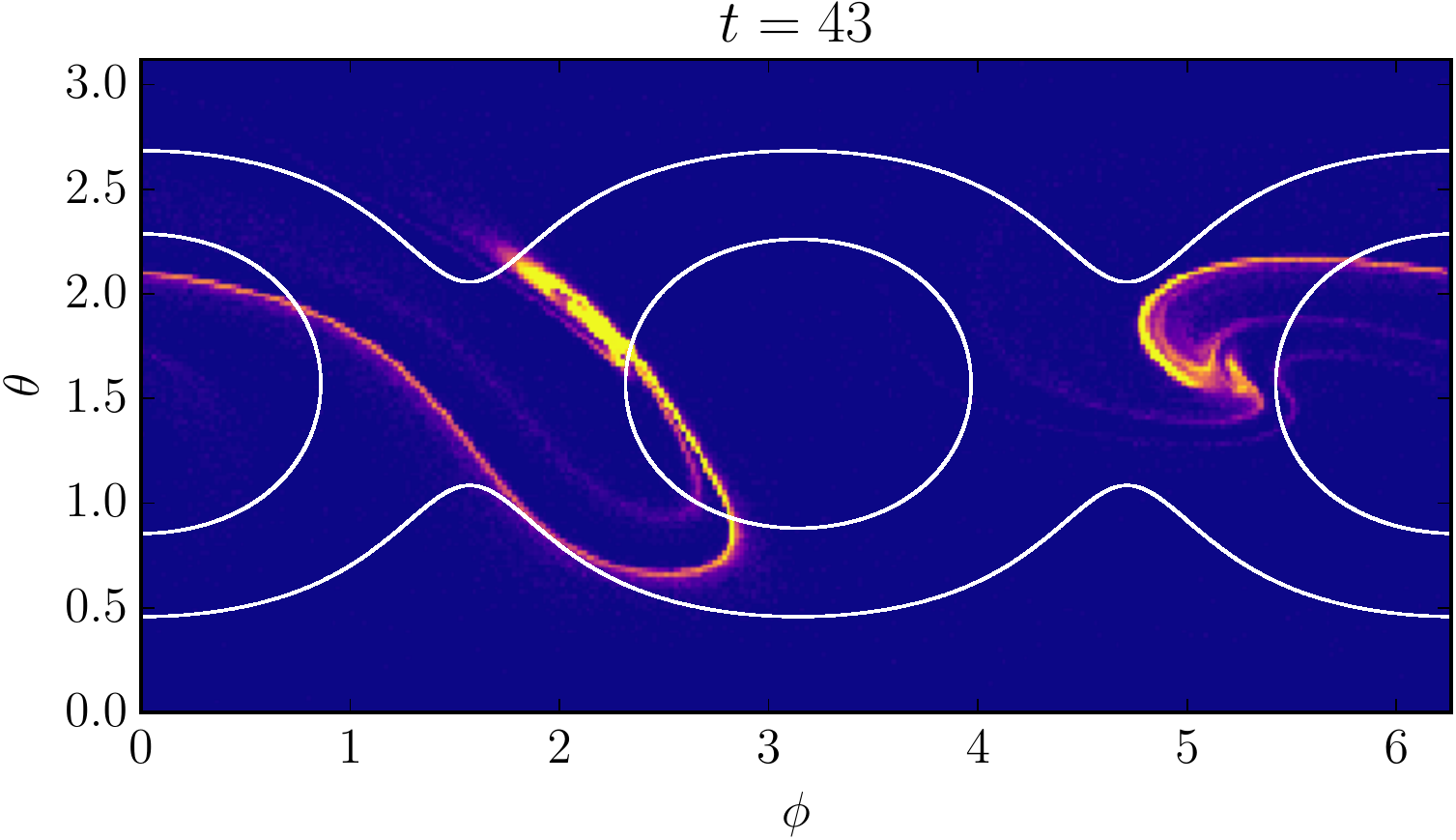}}
	\caption{{\small The four panels display snapshots of the same rigid body random attractor with $\mathbb I= \mathrm{diag}(1,2,3)$, $\theta=0.5$ and $\sigma= 0.5$. 
	The simulation started from a uniform distribution of rigid bodies on the momentum sphere and create finer and finder structures. The color is in log scale and we simulated 400 000 rigid body initial conditions with a split step scheme. }}
	\label{fig:RB-RA}
\end{figure}
The plots in Figure \ref{fig:RB-RA} show the SRB measure, in log scale and exhibit the phenomena of stretching and folding, typical of strange attractors with a positive and negative Lyapunov exponents. 
The positive exponents produce the stretching mechanism and the negative ones produce the folding process. 
Asymptotically in time, these mechanisms may create a fractal structure, similar to the Smale horseshoe structure for the Duffing attractor.
The mechanisms for the creation of the rigid body random attractor can be understood from the underlying rigid body dynamics. 
The heteroclinic orbits, linking the two saddle equilibrium points which correspond to the direction of the second moment of inertia are the longest orbits, with the fastest dynamics along them. 
The speed of the motion for each orbits then decreases to reach the stable equilibrium points, associated to the largest or smallest moment of inertia. 
This change in speed creates a shear on a given non-singular set evolving with the rigid body dynamics. 
Together with the compactness of the sphere, the combination of noise and dissipation produces the complicated structure of the attractor. 
One can remark that the attractor of the Duffing oscillator is similar and provides a good example in the deterministic context of the creation of these structures. 

A more detailed study of this random attractor will certainly be interesting, but is out of the present scope of this work as it would require deeper dynamical systems analysis.
Nevertheless, we will briefly study in the next section a simplification of this model which considers periodic kicking instead of Brownian motion.

\subsection{Periodic kicking}\label{kicking}

We finish this section devoted to the stochastic three dimensional rigid body by making the simple replacement of the noise by a periodic kicking. 
It turns out that this type of forced dynamical systems also shows chaotic behaviour, and furthermore the theoretical understanding of these phenomenon is more advanced that for the pure noise case. 
We refer to \cite{wang2003strange, lin2008shear} and references therein for such studies. 

For us, the periodic kicking is achieved by simply replacing the noise $dW^i_t$ by the sum of dirac delta functions
\begin{align}
	dW_t^i\Rightarrow \sigma_i \sum_{n=1}^\infty \delta ( t- nT), 
\end{align}
where $T$ is the period for the kicking and $\sigma\in \mathbb R^3$ represents the amplitude and direction of the kick, as in the noisy system. 
It is interesting to remark that the kicking corresponds to a rotation around the fixed axis $\sigma$ and with an angle proportional to $\|\sigma \|$. 
Notice that in the noisy case, the axis of rotation was also random, thus it was not relevant to consider it for our studies. 
The double bracket dissipation term is still there, thus some attractors are expected to emerge but they will surely not be random. 
In fact, due to the periodic kicking, the notion of attractor must be modified slightly. 
Recall that in the noisy case, in order to have a fixed attractor in time, we needed the notion of pullback attractor. Here, we will fix the attractor by just observing it at discrete times $nT$. 
Indeed, between each kick, the dynamics relaxes following the damped deterministic rigid body equation. 

We will not attempt an theoretical study of this system but rather illustrate its complexity with the help of numerical investigations. 
Since the parameter space is rather large, we will just highlight the most typical behaviour of the system while only varying the amplitude of the kick, $\|\sigma\|$ for fixed $T=1$, $\theta= 0.2$, $\sigma= \|\sigma\|(1,1,1)$ and $\mathbb I = \mathrm{diag}(1,2,3)$. 
The fact that $\sigma$ is not aligned with any eigenvalue of $\mathbb I$ makes the configuration of the rigid body generic enough for the present study. 
Remarkably, this simple system can undergo many different types of dynamical behaviour by simply varying the kicking amplitude $||\sigma||$. 

We have numerically scanned the attractors for various values of $\|\sigma\|$ and have displayed the results in figure \ref{fig:RB-kick}, which we will analyse qualitatively below. 
\begin{figure}[htpb]
	\centering
	\subfigure{\includegraphics[scale=0.35]{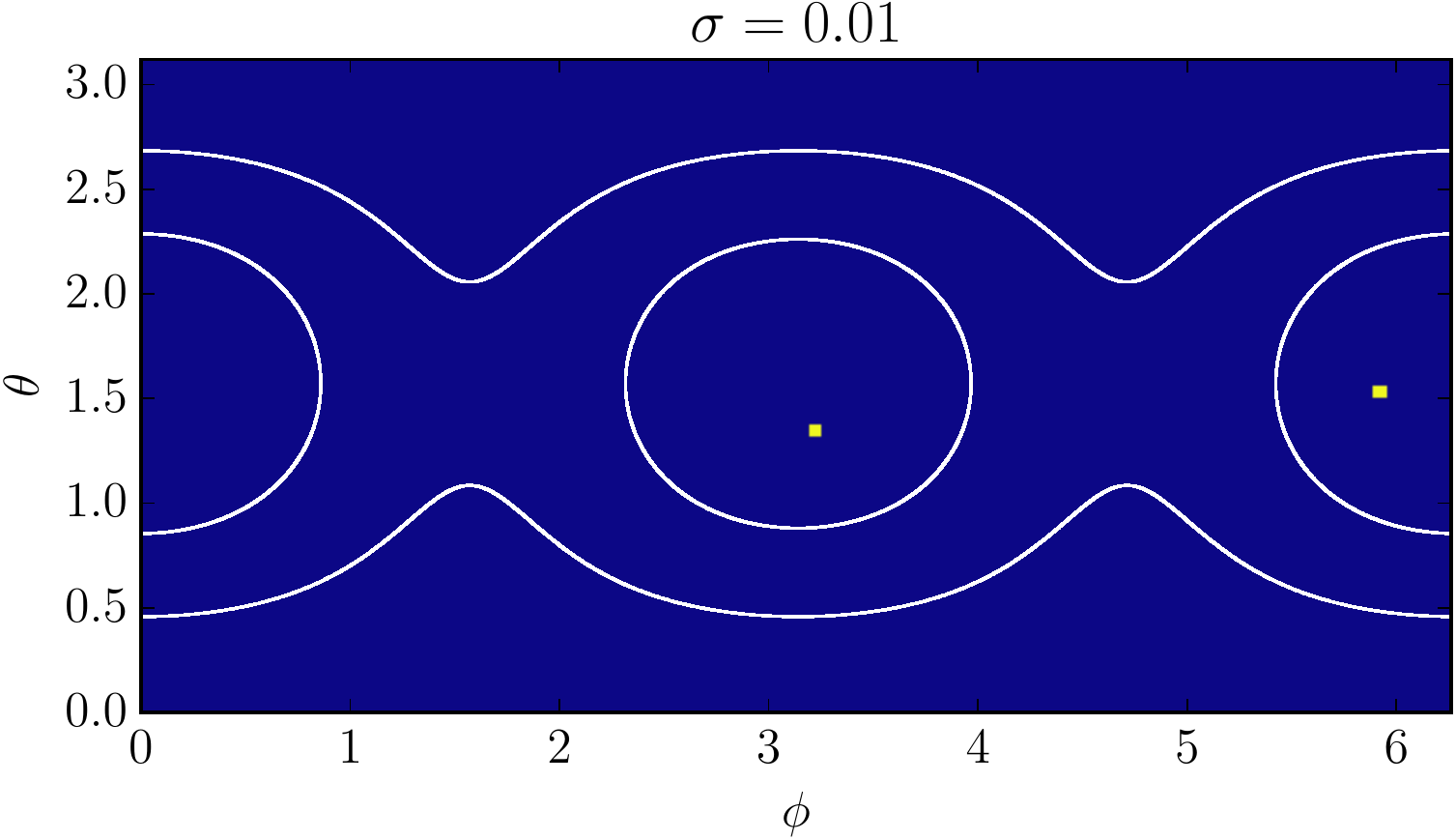}}
	\subfigure{\includegraphics[scale=0.35]{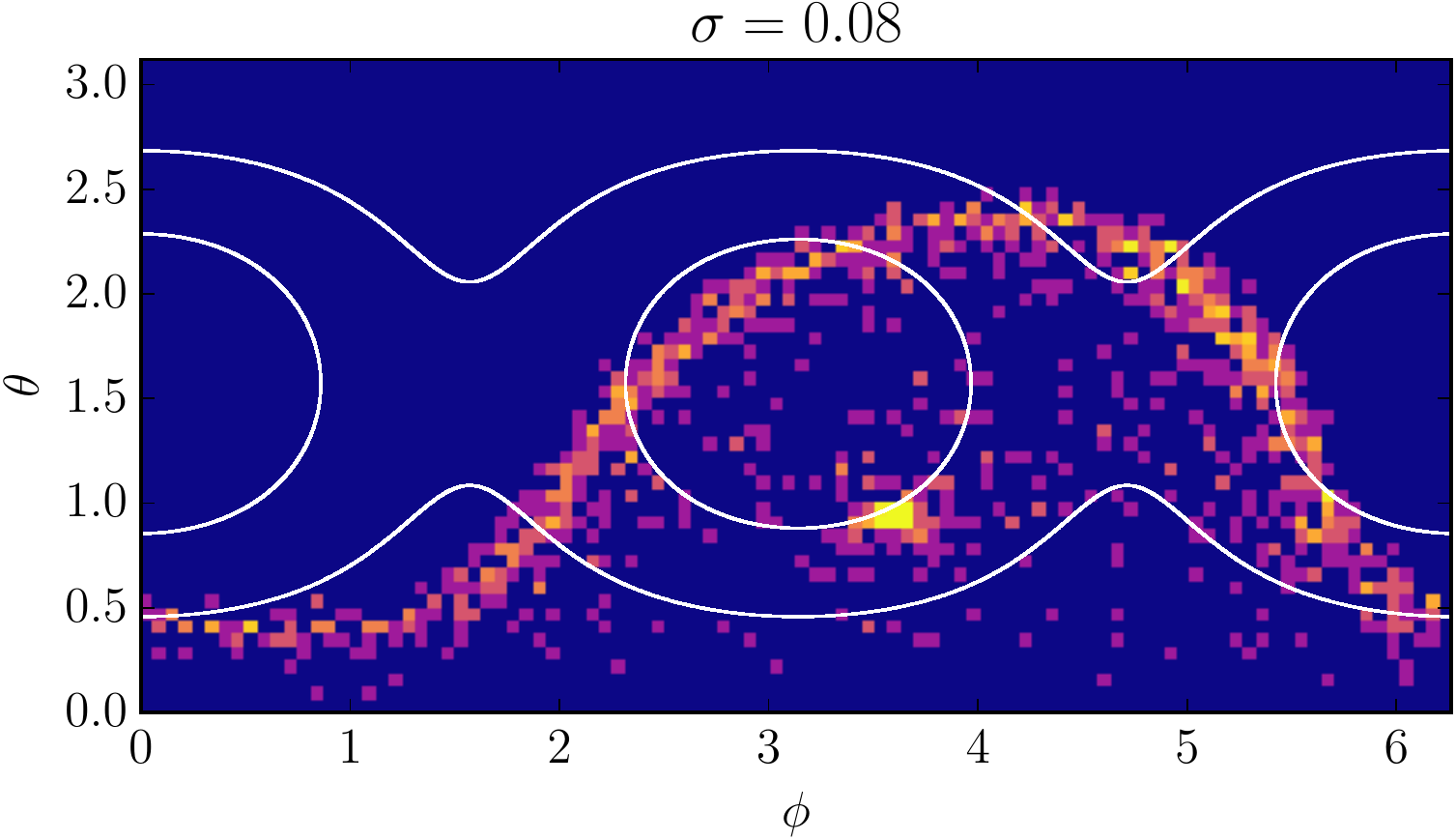}}
	\subfigure{\includegraphics[scale=0.35]{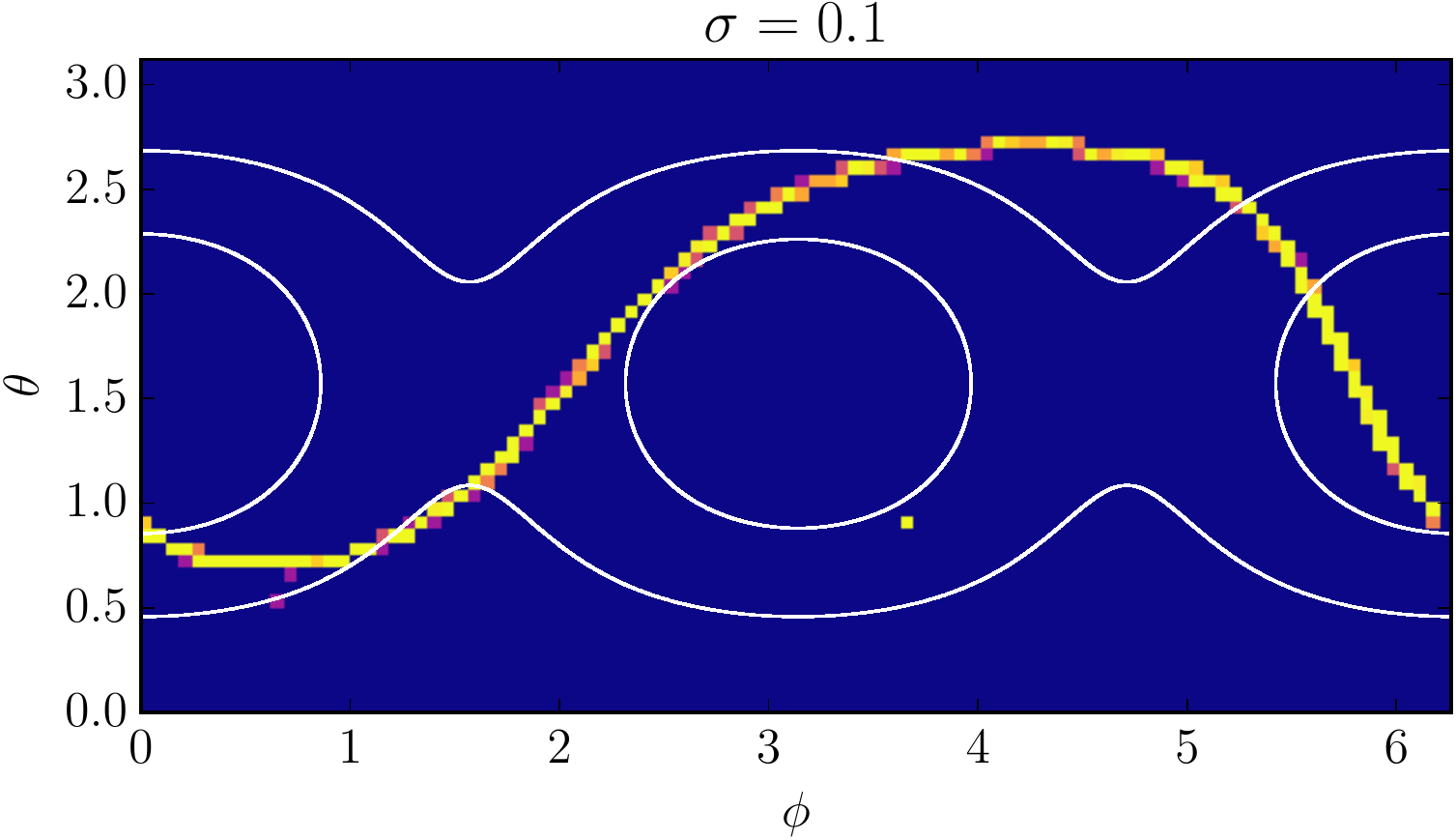}}
	\subfigure{\includegraphics[scale=0.35]{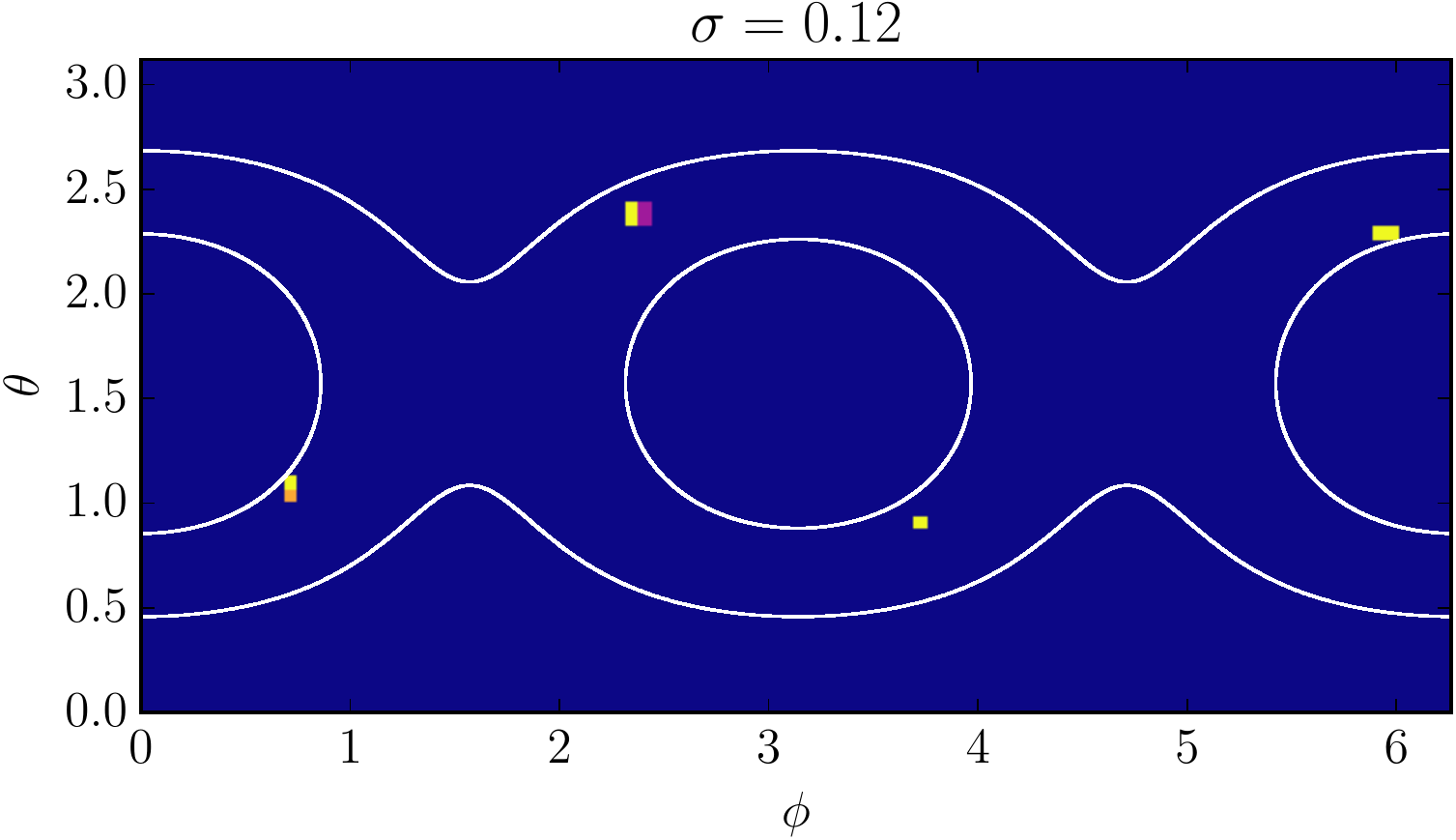}}
	\subfigure{\includegraphics[scale=0.35]{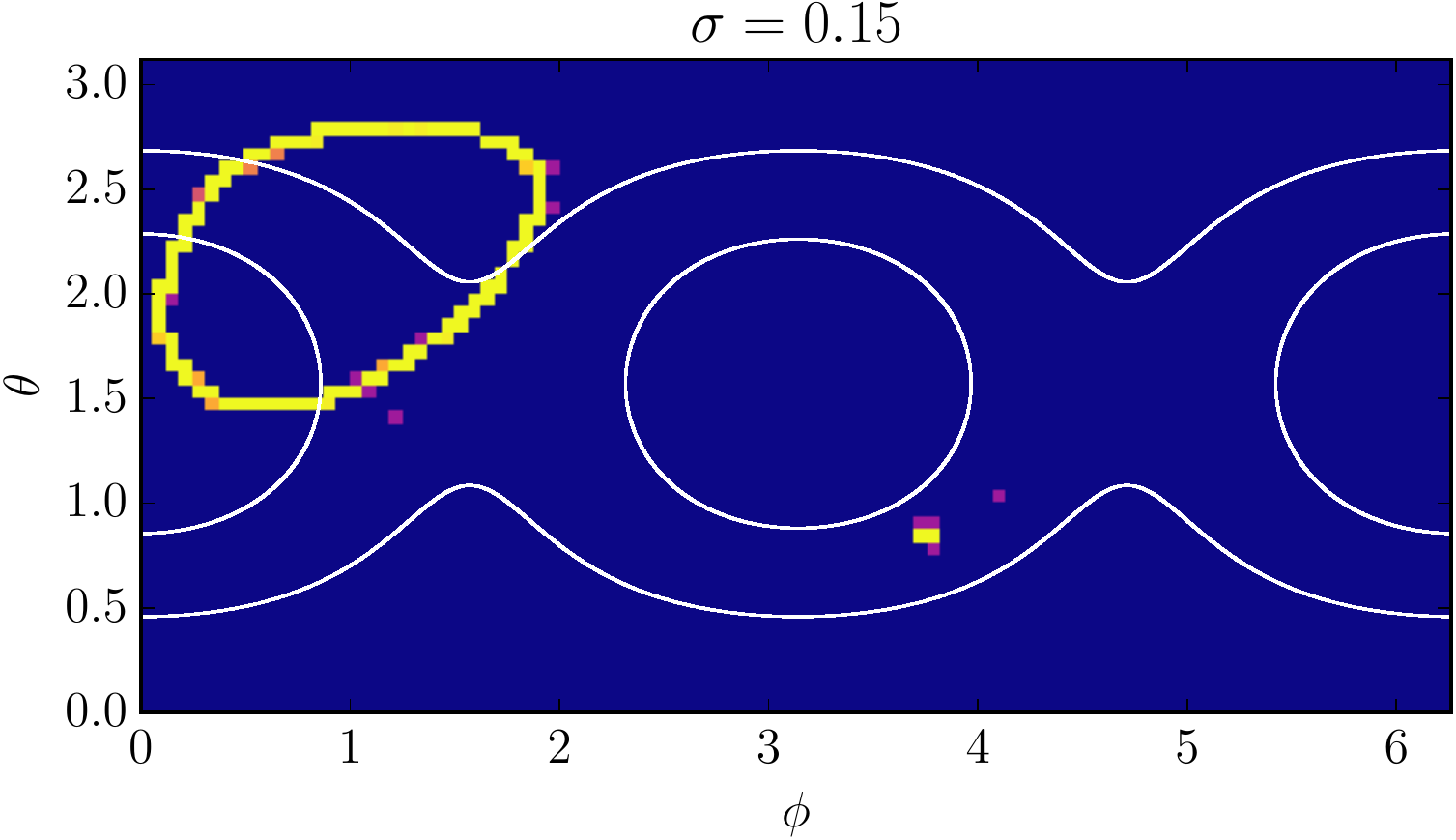}}
	\subfigure{\includegraphics[scale=0.35]{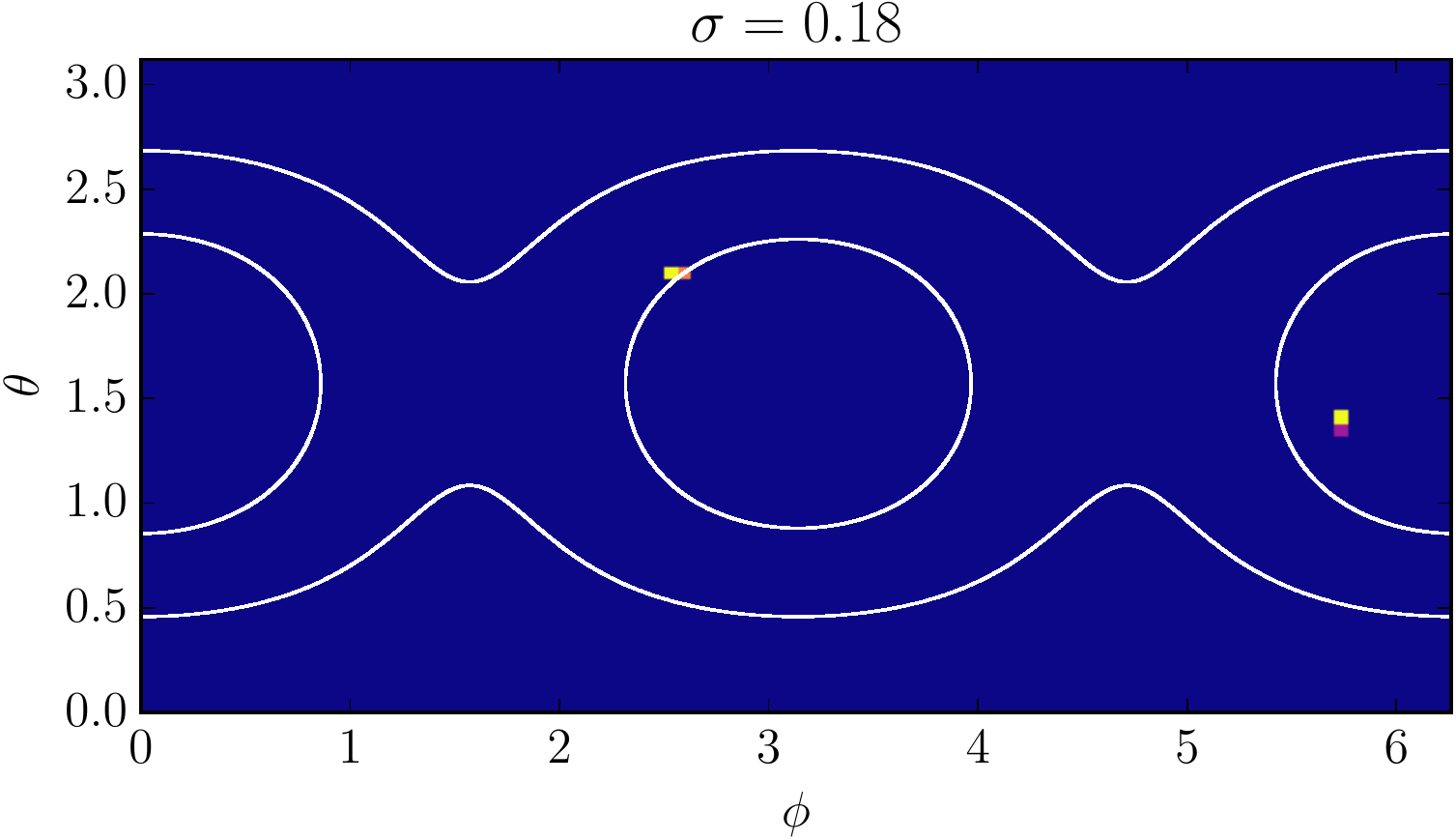}}
	\caption{ {\small This figure displays several attractors of the periodically kicked rigid body for the different regimes parametrised by the kicking amplitude $\|\sigma\|$ as described in the text.  
	Each snapshot is taken after the system has reached an equilibrium position, except for $\|\sigma\|=0.8$.
	In this case, we have illustrated the transient limit cycle structure which quickly disappears as the solution  converges to a singular attractor.  
For $\sigma=0.12$, the three singular attractors (except the lower right one) which are slightly thicker are in fact a single attractor and the kicking makes these points periodically switch amongst each other. 
The last panel with $\sigma=0.18$ shows a similar behaviour for a single attractor composed of two points that switch between each other. }}
	\label{fig:RB-kick}
\end{figure}

The first thing to remark in order to understand these different types of behaviour is that the kick will rotate the momentum of the rigid body in the same direction as the original rigid body flow for the lower right region, and in the opposite direction for the upper left region in figures \ref{fig:RB-kick}. 
For this reason, the effect of the periodic kicking will be different in the opposite sides of the sphere and this will lead to two different types of attractors near these regions. 
We now describe the different types of attractors which appear in this system, upon varying the kicking amplitude.
Let $\mathcal A^+$ denote the attractor where the dynamical flow and the kicking are in the same direction and let $\mathcal A^-$ denote the other attractor. 
They will be understood as being the same if only one attractor emerges. 
Recall that all the parameters are fixed, except the kicking amplitude $\|\sigma\|$. 
We thus classify the different regimes according to an ordered sequence $\epsilon_0<\ldots <\epsilon_5$ of values of $\|\sigma\|$, that we will find later. 

The analysis was done by looking at individual trajectories of rigid bodies and the values of the Lyapunov exponents, which we display in figure \ref{fig:lyap-kick}.
\begin{figure}[htpb]
	\centering
	\includegraphics[scale=0.5]{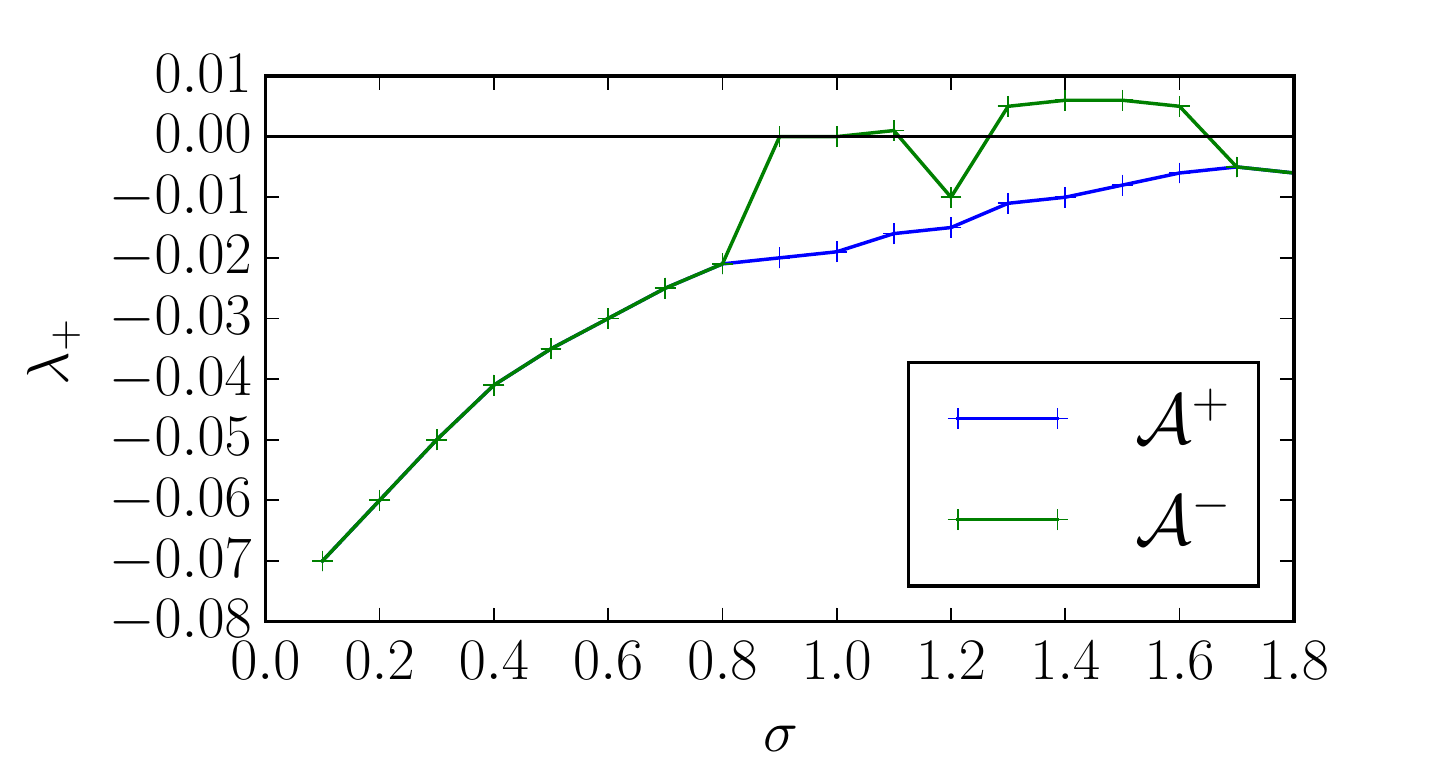}
	\caption{{\small We display the result of the numerical computation of the top Lyapunov exponent for the kicked rigid body as a function of the kicking amplitude $\|\sigma\|$ for the possibly two attractors.  
	The $\mathcal A^+$ attractor is always singular, thus always has a negative top Lyapunov exponent whereas the other attractor sometimes shows chaos on the limit cycle. }}
	\label{fig:lyap-kick}
\end{figure}
From all this data we can estimate the values of the $\epsilon$ in the ordered sequence as approximatively $(0.2,0.8,1.1,1.3,1.7)$, as illustrated in figure \ref{fig:lyap-kick}.

The different regimes are as follow.
\begin{enumerate}
	\item $\sigma< \epsilon_0$: Both attractors $\mathcal A^+$ and $\mathcal A^-$ are singular and near the original rigid body equilibrium positions. 
	\item $\epsilon_0<\sigma<\epsilon_1$: The points near the region of the previous $\mathcal A^-$ eventually reach the $\mathcal A^+$ region to form one singular attractor near the previous $\mathcal A^+$. 
	Before the collapse of point near $\mathcal A^-$ to the region near $\mathcal A^+$, we observe a short transient chaos, close to being a limit cycle. 
	The chaos is revealed by a positive Lyapunov exponent during this period. 
	\item $\epsilon_1<\sigma<\epsilon_2$: The attractor $\mathcal A^-$ is a limit cycle which passes near both original  equilibrium of minimum energy, and thus is driven by the kicks on both sides of the sphere.  
	The chaos on this limit cycle is not clear from the Lyapunov exponent computation, as the top Lyapunov exponent is very close to $0$, see figure \ref{fig:lyap-kick}. 
		The other attractor $\mathcal A^+$ is singular, near its previous position. 
	\item $\epsilon_2<\sigma<\epsilon_3$: The attractor $\mathcal A^-$ consists of $3$ singular points at roughly equal distance from the kicking axis $(-1,-1,-1)$. 
		These three points form a single attractor as the kick makes then periodically switch between themselves.  The other attractor $\mathcal A^+$ is still singular. 
	\item $\epsilon_3<\sigma<\epsilon_4$ : The attractor $\mathcal A^-$ consists of a chaotic limit cycle centred around the kicking axis $(-1,-1,-1)$  and $\mathcal A^+$ is still singular. 
		As compared to the previous limit cycle, this one remains near the region where the kicking is opposite to the flow direction and has a stronger chaos, as seen from the Lyapunov exponent computation, see figure \ref{fig:lyap-kick}. 
	\item $\epsilon_4<\sigma<\epsilon_5$ : The last region explored here shows that both attractors merge to a single attractor that consists of two fix points.  
		The periodic kicking switches one to the other, as in case $(4)$.  
\end{enumerate}

From these findings, the most remarkable result is not the existence of chaos on the limit cycles, but rather the existence of the limit cycles themselves. 
The chaos can be understood in the same way as for the stochastic case, namely by the shear of the system. 
The existence of a stable limit cycle is in fact rather subtle as it requires a fine balance between the kicking, the shear and the damping of the rigid body. 
A precise analytical estimation for the emergence of such limit cycles is of course out of the scope of this work and  we leave it for further studies. 

\section{Semidirect product example: the stochastic heavy top}\label{HT}
The basic example of semidirect product motion is the heavy top, which arises in the presence of gravity, when the support point of a freely rotating rigid body is no longer at its centre of mass. The starting phase space for the heavy top is $T^\ast {\rm SO}(3)$, just as for the free rigid body.  When the support point is shifted away from the centre of mass, gravity breaks the symmetry, and the system is no longer ${\rm SO}(3)$ invariant. Consequently, the motion can no longer be written entirely in terms of the body angular momentum $\boldsymbol{\Pi}\in \mathfrak{so}(3)^*$.
One also needs to keep track of the unit vector $\boldsymbol{\Gamma}$, the ``direction of gravity'' as seen from the body $(\boldsymbol{\Gamma}  = \mathbf{R}^{-1}\mathbf{k} $ where the unit vector $\mathbf{k}$ points upward in space and $\mathbf{R}$ is the element of ${\rm SO}(3) $ describing the current configuration of the  body). The variable $\boldsymbol{\Gamma}$ may be identified with elements in the coset space $SO(3)/SO(2)$, where $SO(3)$ is the symmetry broken by introducing a special vertical direction for gravity, and $SO(2)$ is the remaining symmetry. This $SO(2)$ is the isotropy subgroup of $SO(3)$ corresponding to rotations around the unit vector $\mathbf{k}$ which leave the direction of gravity invariant. 
\\

\subsection{The stochastic heavy top}

The Lagrangian for the heavy top is the difference of the kinetic energy and the work against gravity, where the fixed vector $\boldsymbol \chi$ represents the position of the centre of mass of the body with respect to the fixed point. In body coordinates, the reduced Lagrangian is
\begin{align}
	l(\boldsymbol \Omega, \boldsymbol \Gamma) = \frac{1}{2} \boldsymbol \Omega\cdot  \mathbb{I}\boldsymbol \Omega - mg\boldsymbol \Gamma\cdot \boldsymbol \chi\, .
	\label{HT-lag}
\end{align}
We refer to see \cite{holm1998euler, marsden1999intro} for a complete description of the semidirect product reduction  for the heavy top, which we will not explain here.  
The stochastic potential will be taken to be linear in both the $\boldsymbol \Gamma$ and $\boldsymbol \Pi$:
\begin{align}
	\Phi_i(\boldsymbol \Gamma,\boldsymbol \Pi) = \boldsymbol{\sigma}_i\cdot \boldsymbol \Pi+\boldsymbol{\eta}_i\cdot \boldsymbol \Gamma\,,
\end{align}
where $\boldsymbol \sigma_i$ and $\boldsymbol \eta_i$ need not span $\mathbb R^3$.
The stochastic process describing the stochastic heavy top is then 
\begin{align}
	\begin{split}
	d\boldsymbol \Pi &+(\boldsymbol \Omega dt + \sum_i \boldsymbol{\sigma}_i\circ dW^i_t) \times\boldsymbol \Pi +mg (\boldsymbol \Gamma \times \boldsymbol \chi)dt+\sum_i mg(  \boldsymbol \Gamma \times \boldsymbol{\eta}_i)\circ dW^i_t =0 \,,\\
d\boldsymbol \Gamma &+(\boldsymbol \Omega dt + \sum_i\boldsymbol{\sigma}_i\circ dW^i_t)\times\boldsymbol \Gamma= 0 \,,
	\end{split}
\end{align}
and the corresponding It\^o process is 
\begin{align}
	\begin{split}
		d\boldsymbol \Pi &+ (\boldsymbol \Omega dt + \sum_i \boldsymbol{\sigma}_i dW^i_t) \times\boldsymbol \Pi + (\boldsymbol \Gamma\times mg\boldsymbol \chi)dt  \\
		&+ \sum_i mg( \boldsymbol \Gamma\times \boldsymbol{\eta}_i)\circ dW^i_t-\frac12\sum_i\boldsymbol{\sigma}_i\times(\boldsymbol{\sigma}_i\times \boldsymbol \Pi)dt=0 \,,\\
		d\boldsymbol \Gamma &+(\boldsymbol \Omega dt + \sum_i\boldsymbol{\sigma}_i dW^i_t)\times \boldsymbol \Gamma -\frac12 \sum_i\boldsymbol{\sigma}_i\times(\boldsymbol{\sigma}_i\times \boldsymbol \Gamma)dt=0 \,.
	\end{split}
\end{align}

The two Casimirs of the heavy top are conserved, $\|\boldsymbol \Gamma\|^2=k$ and $\boldsymbol\Pi\cdot\boldsymbol \Gamma=c$. However, the energy is not conserved, as it satisfies the following stochastic process
\begin{align}
	\begin{split}
	\frac{d}{dt} E 
	&= \frac14 \sum_i \left [\boldsymbol \Omega\cdot (\boldsymbol{\sigma}_i\times (\boldsymbol{\sigma}_i\times \boldsymbol \Pi)) +  \boldsymbol \Pi\cdot \mathbb I^{-1} ( \boldsymbol{\sigma}_i\times (\boldsymbol{\sigma}_i\times \boldsymbol \Pi))\right ]dt\\
	&+ \frac12 \sum_i \left [ (\boldsymbol \Pi\times \boldsymbol{\sigma}_i)\cdot \mathbb I^{-1} (\boldsymbol \Pi\times \boldsymbol{\sigma}_i) - mg (\boldsymbol{\sigma}\times \boldsymbol \Gamma)\cdot (\boldsymbol \chi\times \boldsymbol{\sigma}_i) \right ] dt \\
	&+ \frac12 \sum_i\left [ \boldsymbol \Omega\cdot (\boldsymbol \Pi\times \boldsymbol{\sigma}_i) +  \boldsymbol \Pi\cdot \mathbb I^{-1} (\boldsymbol \Pi\times \boldsymbol{\sigma}_i) + 2 \boldsymbol \chi\cdot (\boldsymbol \Gamma\times \boldsymbol{\sigma}_i)\right ] dW^i_t \,.
	\end{split}
	\label{HT-energy}
\end{align}
The energy being only bounded from below, this stochastic process can lead to arbitrary large value for the energy, over a long enough time. 

\subsection{The integrable stochastic Lagrange top}
When $\mathbb I$ is of the form $\mathbb I=\mathrm{diag}(I_1,I_1,I_3)$ and $\chi=(0,0,\chi_3)$, the deterministic heavy top is called the Lagrange top and is integrable.
The integrability comes from the extra conserved quantity $\boldsymbol \Pi\cdot \boldsymbol \chi$, in this case.
For noise, the stochastic process for this quantity is 
\begin{align}
	\frac{d}{dt}(\boldsymbol \Pi\cdot \boldsymbol \chi) = -\frac12 \sum_i(\boldsymbol \chi\times\boldsymbol{\sigma}_i)\cdot (\boldsymbol{\sigma}_i\times \boldsymbol \Pi)\,  dt - \sum_i\boldsymbol \chi\cdot (\boldsymbol \Pi\times \boldsymbol{\sigma}_i)dW_t^i,
\end{align}
which is not a conserved quantity in general.
However, the form of this equation implies that if one selects $\boldsymbol \sigma_i=\boldsymbol \chi$ then $\boldsymbol \Pi\cdot \boldsymbol \chi$ is a conserved quantity. 
It is remarkable that with this choice of noise, the energy is also a conserved quantity, as one can check from equation \eqref{HT-energy}. 
We thus have a stochastic integrable Lagrange top, with a stochastic Lax pair given by
\begin{align}
d(\lambda^2\boldsymbol \chi + \lambda \boldsymbol \Pi + \boldsymbol \Gamma) = ((\lambda \boldsymbol \chi  + \boldsymbol \Omega )dt + \boldsymbol \chi\circ dW)\times (\lambda^2\boldsymbol \chi + \lambda \boldsymbol \Pi + \boldsymbol \Gamma),
\end{align}
where $\lambda$ is arbitrary and called a spectral parameter. 
We refer to \cite{ratiu1981euler} for more details about the integrability of the Lagrange top.
Following the framework of integrable hierarchies, further developed for infinite dimensional integrable hierarchies in \cite{arnaudon2015integrable}, there exists another integrable stochastic Lagrange top where the stochastic potential is the same as the Hamiltonian. 
The explanation for the integrability is straightforward, as the change of variable $t\to t+W_t$ maps the stochastic Lagrange top to the deterministic one; so we will not discuss it in more detail here.
\begin{figure}[htpb]
	\centering
	\subfigure[$\Gamma$  motion]{ \includegraphics[scale=0.45]{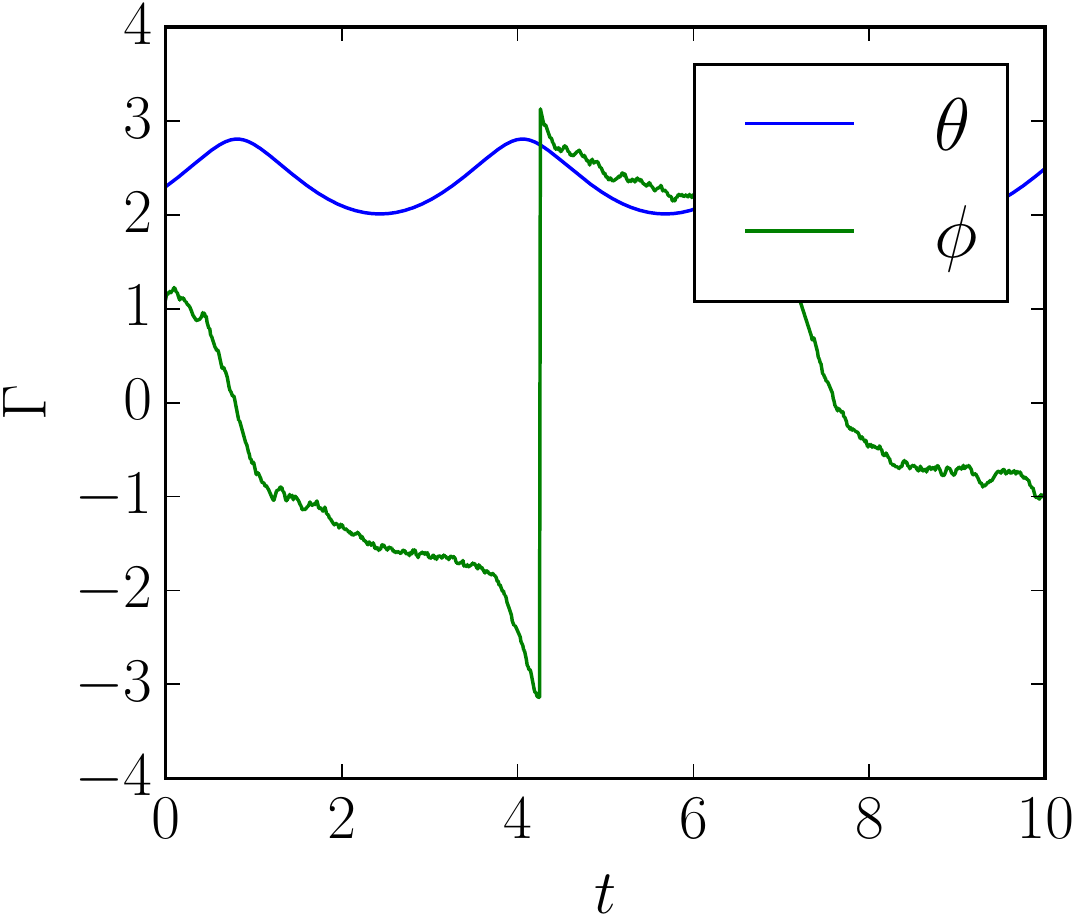}}
	\subfigure[Conserved quantities]{ \includegraphics[scale=0.45]{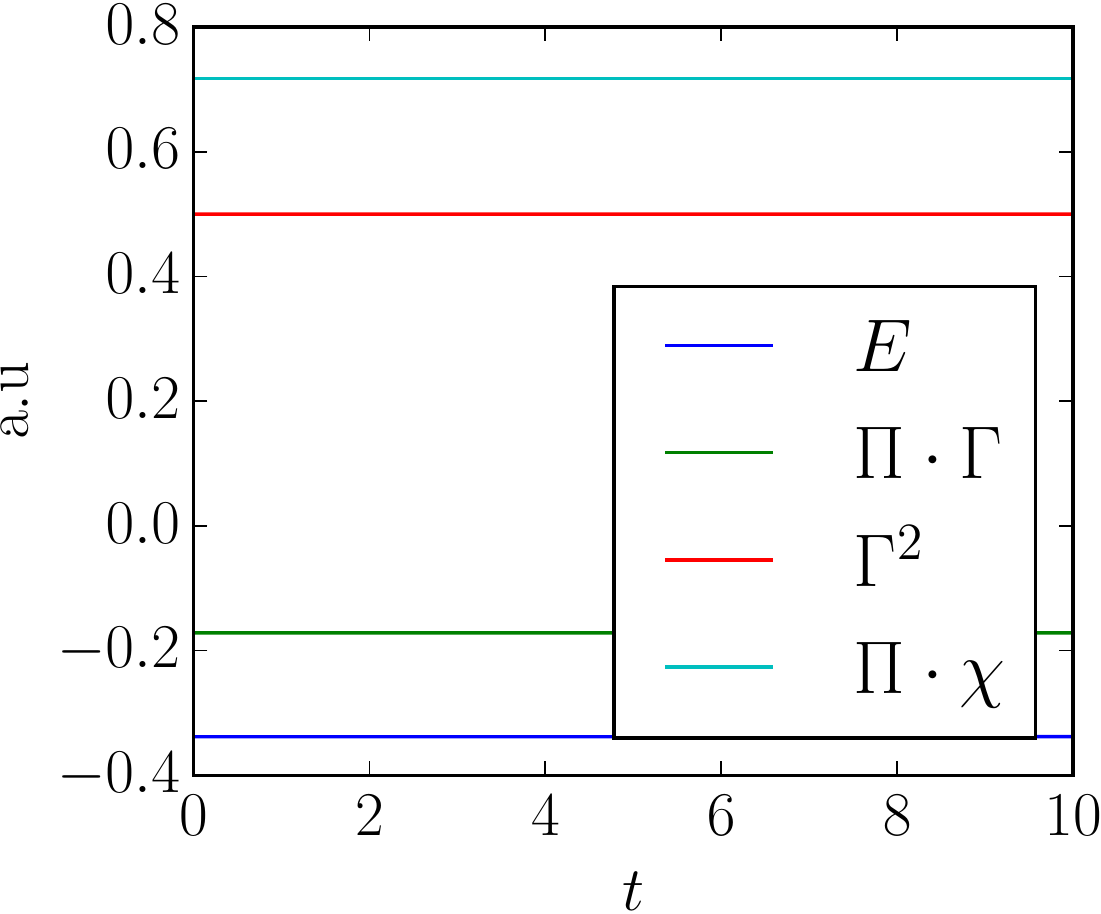}}
	\caption{{\small This figure displays a realisation of the motion of the integrable stochastic Lagrange top.  The conserved quantities are displayed in the right panel. }}
	\label{fig:lagrange}
\end{figure}

We want to study this stochastic system further, as integrability means that an explicit solution can be found. 
Indeed, from the standard theory of the heavy top, see for example \cite{arnold89mechanics,audin1996tops}, the equation for $\Gamma_3$ can be found to be of the form $\dot \Gamma_3^2= f(\Gamma_3)$, where $f$ depends only on the constants of motion $k$ and $c$. 
Then, a straightforward calculation with Euler angles gives 
\begin{align}
	\begin{split}
	\dot \psi  &= \frac{c-k\Gamma_3  }{(1- \Gamma_3^2) I}\\
	d \phi  &= \left [ \frac{c}{I_3\Gamma_3} - \frac{c-k\Gamma_3  }{I_3\Gamma_3(1- \Gamma_3^2) I} ((1-\Gamma_3^2) I  -  I_3\Gamma_3^2)\right ]dt -\chi_3\circ dW\,,
	\end{split}
\end{align}
where $\mathrm{cos}(\theta)= \Gamma_3$ gives the third Euler angle. 
Surprisingly, only $\phi$ has a stochastic motion, while $\psi$ and $\theta$ follow the deterministic Lagrange top motion. 
This is illustrated in Fig.~\ref{fig:lagrange} via a numerical integration of the stochastic Lagrange top equations.
The conservation of all the Lagrange top quantities is reproduced, as well as the fact that the noise only influences the $\phi$ component of the Euler angles.

\subsection{The Fokker-Planck equation and invariant measures}
We now analyse the associated Fokker-Planck equation for the stochastic heavy top, which is given by
\begin{align}
	\begin{split}
	\frac{d}{dt}\mathbb P(\boldsymbol \Pi, \boldsymbol \Gamma) &= (\boldsymbol \Pi\times \boldsymbol \Omega)\cdot \nabla_\Pi \mathbb P  + (\boldsymbol \Gamma\times \boldsymbol \Omega)\cdot \nabla_\Gamma \mathbb P \\
	&+ \sum_i\frac12(\boldsymbol \Pi\times \boldsymbol{\sigma}_i)\cdot\nabla_\Pi  \left [(\boldsymbol \Pi\times \boldsymbol{\sigma}_i)\cdot  \nabla_\Pi\mathbb P \right ]  \\
	&+  \sum_i \frac12 (\boldsymbol \Gamma\times \boldsymbol{\sigma}_i)\cdot \nabla_\Gamma \left [(\boldsymbol \Gamma\times \boldsymbol{\sigma}_i)\cdot \nabla_\Gamma\mathbb P\right ] \\
	&+ \sum_i\frac12(\boldsymbol \Pi\times \boldsymbol{\sigma}_i)\cdot\nabla_\Pi  \left [(\boldsymbol \Gamma\times \boldsymbol{\sigma}_i)\cdot  \nabla_\Gamma \mathbb P \right ]  \\
	&+   \sum_i\frac12 (\boldsymbol \Gamma\times \boldsymbol{\sigma}_i)\cdot \nabla_\Gamma \left [(\boldsymbol \Pi\times \boldsymbol{\sigma}_i)\cdot\nabla_\Pi \mathbb P\right ],  
	\end{split}
	\label{BK-HT}
\end{align}
where in our notation $\nabla_\Pi$ denotes the gradient with respect to the $\boldsymbol \Pi$ variable only and similarly for $\nabla_{\Gamma}$.
By using the semidirect product Lie-Poisson structure of the heavy top
\begin{align}
	\{H,G\}_{HT} := 	
	\begin{bmatrix}
		\nabla_\Pi H &\nabla_\Gamma H
	\end{bmatrix}
	\begin{bmatrix}
		\boldsymbol \Pi\times & \boldsymbol \Gamma \times\\
		\boldsymbol \Gamma\times & 0 
	\end{bmatrix}
	\begin{bmatrix}
		\nabla_\Pi G\\
		\nabla_\Gamma G
	\end{bmatrix},
	\label{LP-HT}
\end{align}
the Fokker-Planck equation \eqref{BK-HT} can be written in the double bracket form
\begin{align}
	\frac{d}{dt}\mathbb P = \{ h,\mathbb P\}_{HT} +\frac12 \{\Phi,\{\Phi,\mathbb P \}_{HT}\}_{HT}, 
\end{align}
where $h(\boldsymbol \Pi, \boldsymbol \Gamma)$ is the Legendre transform of \eqref{HT-lag}. 

Recall that  the invariant marginal distribution on the $\Gamma$ sphere is constant. 
We study here the distribution in the $\Pi$ coordinate, following the general argument of Theorem \ref{SD-invariant}, which gives the bound
\begin{align}
	0\leq \|\Pi\|(t)\leq \|\Pi_0\| + (mgc) t.
	\label{HT-momentum-bound}
\end{align}
This bound increases linearly with time and is unbounded only when $t\to \infty$.
This effect is clearly illustrated in the Figure \ref{fig:HT-momentum} where the probability distribution of $\|\Pi\|^2$ is plotted. 
The initial conditions are uniform distribution on the $\Gamma$ sphere and a single position for all the momentum, with unit norm. 
\begin{figure}[htpb]
	\centering
	\includegraphics[scale=0.5]{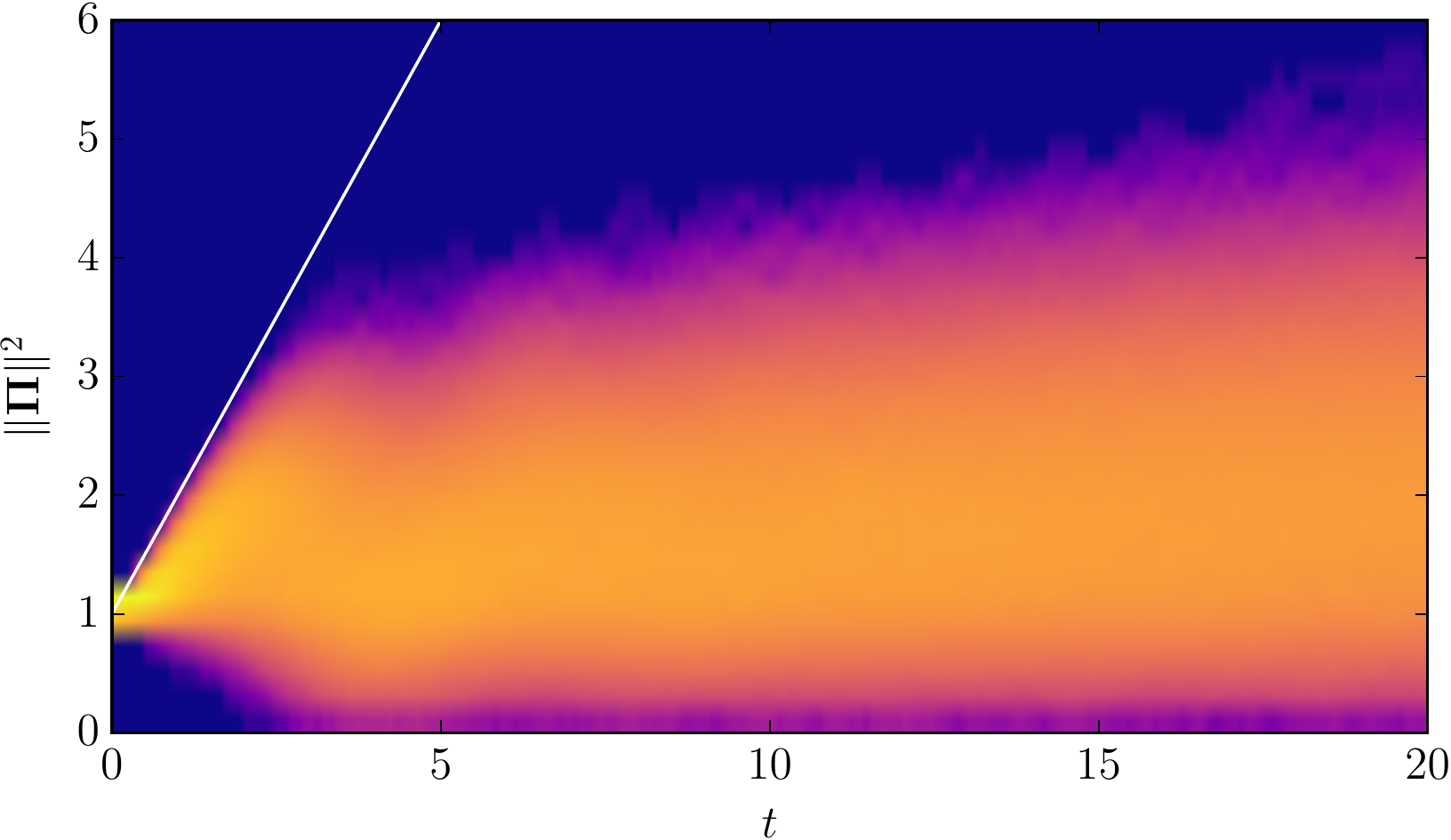}
	\caption{ {\small We display the probability distribution of the norm of the momentum of the heavy top, as a function of time. 
		The distribution tends to $0$ as time goes to $\infty$, but only linearly as shown by equation \eqref{HT-momentum-bound} and the white line in this Figure. 
	The expansion is larger for small time, as the distribution is not yet uniform on the angles of the momentum but linearly bounded in time. 
	After this rapid early expansion, the diffusion slows considerably.}}
	\label{fig:HT-momentum}
\end{figure}
Our system parameters are $m=g=c=1$. Consequently, the linear bound is directly proportional to the time. 
According to Figure \ref{fig:HT-momentum}, the bound is reached almost immediately in the first stage of the diffusion, where the $\Gamma$ and $\Pi$ sphere are not yet uniformly covered. 
After this first short temporal regime, however, the diffusion rate slows considerably below this linear bound. 

\subsection{Random attractor}

The dissipative heavy top equations can be computed directly from the semidirect theory (see also \cite{bloch1996euler}) and in Stratonovich form they read, when the Casimir $\boldsymbol \Pi\cdot\boldsymbol  \Gamma$ is used, 
\begin{align}
	\begin{split}
	d\boldsymbol \Pi &+(\boldsymbol \Omega dt + \sum_i \boldsymbol{\sigma}_i\circ dW^i_t) \times\boldsymbol \Pi +mg (\boldsymbol \Gamma \times \boldsymbol \chi)dt \\
	&+\theta \boldsymbol\Gamma\times(\boldsymbol\Omega\times\boldsymbol\Gamma)dt +\theta \left [ mg\boldsymbol \Pi\times (\boldsymbol\chi\times \boldsymbol \Gamma) - \boldsymbol\Pi\times (\boldsymbol\Pi\times \boldsymbol\Omega)\right]dt= 0  \,,\\
	d\boldsymbol \Gamma &+ (\boldsymbol \Omega dt + \sum_i\boldsymbol{\sigma}_i\circ dW^i_t)\times\boldsymbol \Gamma + \theta \left [ mg\boldsymbol\Gamma\times (\boldsymbol \chi\times \boldsymbol\Gamma) - \boldsymbol \Gamma\times(\boldsymbol\Pi\times \boldsymbol\Omega)\right ]dt = 0 \,.
	\end{split}
\end{align}
Notice that the two Casimirs which define the coadjoint orbits are preserved by both the noise and the dissipation, as expected.  
Also recall the form of the deterministic energy decay 
\begin{align}
	\frac{dh}{dt} 
	&=
	-\theta 
	 \left\|  \boldsymbol\Omega \times \boldsymbol{\Gamma} \right\|^2
	 -\,\theta 
	  \left\|   \boldsymbol\Omega\times \boldsymbol{\Pi}  + mg \boldsymbol\chi \times \boldsymbol{\Gamma} \right\|^2 \,,
   \label{Enorm_decay}
\end{align}
which was used earlier to prove the existence of the random attractor after a nonlinear change of variables. 
The other Casimir $\|\boldsymbol \Gamma\|^2$ can also be used to derive dissipative equations, but energy dissipation will be slower, as only the first term in \eqref{Enorm_decay}  and the first decay term of the $d\boldsymbol \Pi$ are left.
The equilibrium solution of the purely dissipative system are found by setting the right hand side of \eqref{Enorm_decay} to $0$ and are always of the form $\boldsymbol \Omega= \boldsymbol \Gamma= \boldsymbol \chi$ if $\boldsymbol \chi$ is an eigenvalue of $\mathbb I$. 
If not, the equilibrium is aligned to another direction that we will not compute here, as we will stick to the simple generic case. 

We can compute the lower bound for the value of the sum of the Lyapunov exponents using Theorem \ref{LEs-SP} to find
\begin{align}
	 \sum_i \lambda_i \geq -6\sigma^2  -\theta  c^2 \mathrm{Tr}(\mathbb I^{-1})\,,
\end{align}
which is always negative. 
We will not study here the parameter space of the top Lyapunov exponent as done for the rigid body, but only display two instances of an attractor of the heavy top in figure \ref{fig:HT-RA}.
\footnote{See \url{http://wwwf.imperial.ac.uk/~aa10213/} for a video of this random attractor.} 
The formation of the attractors seems not to be of horseshoe type, as occurs for the rigid body. 
This may be explained by the higher dimensionality of the coadjoint orbit (dimension $4$) on which the attractor is supported. 
\begin{figure}[htpb]
	\centering
	\subfigure{\includegraphics[scale=0.53]{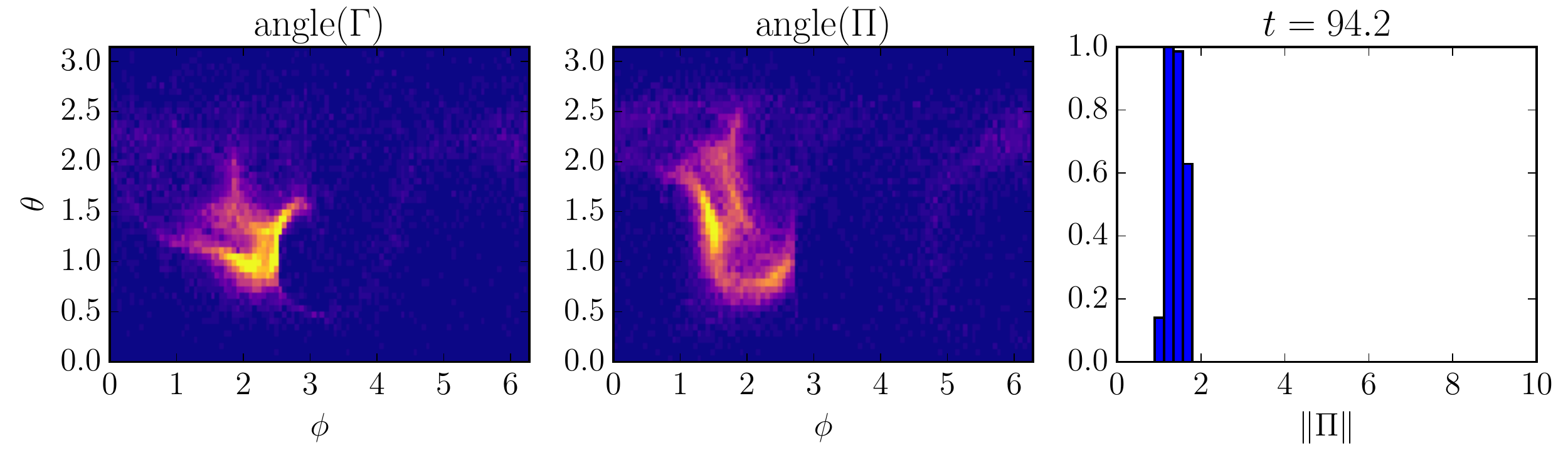}}
	\caption{ {\small  We display here three projections of the attractor on the coadjoint orbit of the heavy top at two different times. 
		The left panel is a projection on the sphere $\|\Gamma\|^2=1$, the second panel on the sphere $\|\Pi\|^2= 1$ and the third panel is the amplitude of the momentum. 
		We used $\theta= 0.2$, $\sigma=0.1$, $\mathbb I= \mathrm{diag}(1,2,3)$, $g=1$ and $20,000$ realisations of the stochastic heavy top, with initial conditions uniformly distributed on a subset of the coadjoint orbit defined by $\|\Pi\|^2= 1$. 
	}}
	\label{fig:HT-RA}
\end{figure}
\section{Two other examples}\label{others}
This section briefly sketches two other stochastic symmetry-reduced examples of the present theory which follow immediately from the examples of the $SO(3)$ rigid body and the heavy top, treated in the previous sections. These are the $SO(4)$ rigid body and the spring pendulum. 


\subsection{The $SO(4)$ rigid body}
For a complete study of the rigid body motion on $SO(4)$ we refer to \cite{birtea2012stability} and references therein. 
We use the generic elements 
\begin{align*}
	X =  
	\begin{pmatrix}
		0 & x_1 & x_2 & x_3 \\
		-x_1 & 0 & x_4 & -x_5 \\
		-x_2 & -x_4 & 0 & x_6\\
		-x_3 & x_5 & -x_6 & 0 ,
	\end{pmatrix}
\end{align*}
or $X= (X_1,X_2)\in \mathbb R^6$. 

In terms of vectors $(X_1,X_2)\in \mathbb R^6$ and $(X_1',X_2')\in \mathbb R^6$ we have
\begin{align*}
       [(X_1,X_2),(X_1',X_2')] = \left ( X_1\times X_1' + X_2\times X_2', X_1\times X_2' + X_2\times X_1'\right ).     
\end{align*}
The coadjoint action is the same, under the trace-pairing. 

The Casimir for $SO(4)$ are given by
\begin{align*}
	C_1&= \mathrm{Tr}(X^2)=\sum_i x_i^2= \|X_1\|^2+\|X_2\|^2 \,,\\
	C_2&= \sqrt{\mathrm{det}(X)}= x_1x_6 + x_2x_5 + x_3x_4= X_1\cdot X_2 \,.
\end{align*}
The first Casimir is a $4$-dimensional sphere and the second is the Pfaffian, or scalar product between two vectors.  

The momentum-velocity relation is $\Pi= J\Omega + \Omega J$ where $J= \mathrm{diag}(\lambda_1,\ldots,\lambda_6)$ and the Hamiltonian $H(\Pi)= \frac12 (\Pi_1\cdot \Omega_1 + \Pi_2\cdot \Omega_2) $.

We thus have the following stochastic $4$-dimensional rigid body equations
\begin{align}
	\begin{split}
	d (\Pi_1,\Pi_2)&= \left ( \Pi_1\times \Omega_1 + \Pi_2\times \Omega_2, \Pi_1\times \Omega_2 + \Pi_2\times \Omega_1\right ) dt \\
	&+  \sum_i \left ( \Pi_1\times \sigma_1^i + \Pi_2\times \sigma_2^i, \Pi_1\times \sigma_2^i + \Pi_2\times \sigma_1^i\right )\circ  dW_i\,,
	\end{split}
\end{align}
which preserve the coadjoint orbit. 

We now look at the selective decay term for the Casimir $C_2(\Pi)= \Pi_1\cdot \Pi_2$.
This term reads, upon using semi-simplicity, 
\begin{align*}
	SD & = \mathrm{ad}_{(\Pi_2,\Pi_1)}\mathrm{ad}_{(\Pi_2,\Pi_1)} (\Omega_1, \Omega_2)\\
         &=\left ( \Pi_2\times (\Pi_2\times \Omega_1 + \Pi_1\times \Omega_2) + \Pi_1\times (\Pi_2\times \Omega_2 + \Pi_1\times \Omega_1),\right . \\
         &=\left ( \Pi_2\times \Pi_2\times \Omega_1 +\Pi_2\times  \Pi_1\times \Omega_2 + \Pi_1\times \Pi_2\times \Omega_2 +  \Pi_1\times\Pi_1\times \Omega_1,\right . \\
	 &\left.  , \Pi_2\times \Pi_2\times \Omega_2 + \Pi_2\times \Pi_1\times \Omega_1 + \Pi_1\times  \Pi_2\times \Omega_1 + \Pi_1\times\Pi_1\times \Omega_2 \right ).     
\end{align*}
One can directly check that the first Casimir $C_1$ is also preserved by this flow. 

\begin{proposition}
	This stochastic dissipative $SO(4)$ free rigid body admits a random attractor.  
\end{proposition}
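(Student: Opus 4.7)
The plan is to deduce the proposition directly from Theorem \ref{RA-EP-thm}, which establishes a random attractor for every stochastic dissipative Euler--Poincar\'e equation \eqref{SEP-Diss} on a Lie group $G$, once the structural hypotheses behind that theorem are verified for the case at hand. The equations just displayed are precisely of the Euler--Poincar\'e form \eqref{SEP-Diss} on the semi-simple Lie group $SO(4)$, with stochastic potentials linear in $\Pi$ and with the double-bracket selective-decay term associated with the Casimir $C_2(\Pi) = \Pi_1\cdot\Pi_2$. The first check is that $\mathfrak{so}(4)\simeq\mathfrak{so}(3)\oplus\mathfrak{so}(3)$ is semi-simple, so that the Killing form is non-degenerate and the isomorphisms $\flat,\sharp$ used to write the dissipative term are well-defined; the second is that both Casimirs $C_1$ and $C_2$ remain conserved along the combined noise and dissipation, which is immediate from the general framework of Section \ref{dissipation-section} but can also be read off the explicit expression of $SD$ given in the text. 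Consequently the dynamics is confined to a coadjoint orbit, which is compact since it lies on the sphere $C_1 = \mathrm{const}$.

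Next, I would carry out the nonlinear random change of variables $\widetilde{\Pi}(t) = \mathrm{Ad}^*_{g(t)}\Pi(t)$ with $g(t,\omega) = \exp(\sum_i \sigma_i W_t^i)$ used in the proof of Theorem \ref{RA-EP-thm}. This transforms the Stratonovich SDE into a genuine random dynamical equation of the form \eqref{RDE}. Since the Hamiltonian $h(\Pi) = \tfrac12(\Pi_1\cdot\Omega_1+\Pi_2\cdot\Omega_2)$ is $\mathrm{Ad}^*$-invariant (it is the Legendre transform of a left-invariant kinetic energy), the general selective-decay identity used in the proof of Theorem \ref{RA-EP-thm} yields, in the transformed variables,
\begin{align*}
  \frac{d}{dt} h(\widetilde\Pi)
  = -\,\theta\,\Big\|\,\big[\mathrm{Ad}_{g^{-1}}\tfrac{\partial C_2}{\partial\widetilde\Pi},\,\mathrm{Ad}_{g^{-1}}\tfrac{\partial h}{\partial\widetilde\Pi}\big]\Big\|^{2} \le 0,
\end{align*}
so that the random flow contracts energy strictly at almost every moment. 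Together with the compactness of the coadjoint orbit, this supplies an absorbing ball in the pull-back sense, and the standard argument of \cite{schenk1998random,chekroun2011stochastic} provides the random attractor.

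The one point deserving care, and the expected main obstacle, is to confirm that the double bracket $[\partial C_2/\partial\Pi,\partial h/\partial\Pi]$ does not vanish identically on a subset of the coadjoint orbit of positive dimension, which would prevent strict contraction of energy on that subset. For the Casimir $C_2 = \Pi_1\cdot\Pi_2$ the zero-set of this bracket consists of the relative equilibria of the $SO(4)$ rigid body, and for a generic inertia spectrum $\{\lambda_1,\dots,\lambda_6\}$ these form only a finite collection of isolated points on the reduced orbit. Hence energy decays strictly along almost every trajectory, and Theorem \ref{RA-EP-thm} applies verbatim to yield the desired random attractor. If one wishes to invoke Theorem \ref{SRB-thm} to upgrade this to a non-singular SRB measure, one would additionally need to verify the parabolic H\"ormander condition \eqref{hormander}; it suffices to choose the $\sigma^i = (\sigma_1^i,\sigma_2^i)$ so that they span $\mathfrak{so}(4) \simeq \mathbb{R}^6$, exactly as in the $SO(3)$ case.
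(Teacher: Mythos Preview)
Your approach is correct and is essentially the paper's own: the paper's proof consists of the single sentence ``This is a direct application of the theory developed in Section \ref{dissipation},'' i.e.\ an appeal to Theorem \ref{RA-EP-thm}, which is exactly what you do. Your additional verifications (semi-simplicity of $\mathfrak{so}(4)$, compactness of the orbit via $C_1$, the explicit change of variables) and your worry about the zero set of $[\partial C_2/\partial\Pi,\partial h/\partial\Pi]$ go beyond what the paper supplies---the bare existence statement in the proposition does not require strict decay away from equilibria, and the paper does not address this point either.
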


\begin{proof}
	This is a direct application of the theory developed in Section \ref{dissipation}. 
\end{proof}

The invariant distribution will be centred around the minimal energy position, associated to the direction of the maximal moment of inertia. 
We will not numerically investigate the random attractors for this system here. However, further theoretical studies are indeed possible and these would be interesting to discuss elsewhere, especially the integrable case, with a particular choice of the noise. 

\subsection{Spring pendulum}\label{SP}

From the heavy top equation one can derive the spherical pendulum by letting one of the components of the diagonal inertia tensor in body coordinates tend to zero, e.g., $I_3\to0$. 
This follows, because the spherical pendulum is infinitely thin and, hence, does not have any inertia for rotations around its axis. 
We shall choose $\mathbb I= \mathrm{diag}(I,I,\epsilon)$ in the heavy top equations and then take the limit $\epsilon\to 0 $ so that the dynamics on $\Pi_3$ vanishes. 
The  similarity of this system with the rigid body allows us to consider an extension of the spherical pendulum which is called the spring pendulum \cite{lynch2002swinging}. 
To include the dynamics of the length of the spring pendulum, we introduce a new variable $R(t)\in \mathbb R\setminus\{0\}$ and enforce its dynamical evolution in the variational principle by adding  $ P(\dot R- v )dt$ where $v$ denotes the velocity of the mass along the pendulum and $P$ denotes its associated momentum. 
The Lagrangian is then found to be 
\begin{align}
l(\boldsymbol \Omega,\boldsymbol \Gamma, R,v) = \frac{m}{2}R^2\boldsymbol \Omega\cdot \mathbb{I}\boldsymbol \Omega - mgR\boldsymbol \Gamma\cdot \boldsymbol \chi+ \frac12 m v^2  - \frac{k}{2}(R - 1)^2
\,,
\label{SP-lag}
\end{align}
where $\boldsymbol \chi$ represents the initial position of the pendulum which is taken to be $(0,0,1)$ in accordance with our choice of inertia tensor. 
In \eqref{SP-lag}, we denote the spring constant by $k$ and the mass of the pendulum bob by $m$. 

We shall assume a general linear stochastic potential of the form,
\begin{align}
\Phi(\boldsymbol \Pi,\boldsymbol \Gamma,R,P):= \boldsymbol \sigma\cdot \boldsymbol \Pi 
+\boldsymbol \eta\cdot \boldsymbol \Gamma+ \alpha R + \beta P
\,,
\end{align}
for constant vectors $\boldsymbol \sigma,\boldsymbol \eta,$ and constant scalars $\alpha,\beta$.
Consequently, the stochastic spring pendulum equations are given by
\begin{align}
	\begin{split}
	d\boldsymbol \Pi &= \boldsymbol \Pi\times \boldsymbol \Omega dt 
	+ mgR \boldsymbol \Gamma \times  \boldsymbol \chi dt 
	+ \boldsymbol \Pi\times \boldsymbol \sigma_i\circ dW_t^i 
	+ \boldsymbol \Gamma\times \boldsymbol{\eta}_i\circ dW_t^i
	\,,\\
	d\boldsymbol \Gamma &= \boldsymbol \Gamma\times \boldsymbol \Omega dt 
	+ \boldsymbol \Gamma \times \boldsymbol \sigma_i\circ dW_t^i
	\,,\\
	dR&= \frac{P}{m |\boldsymbol \chi |^2}dt + \beta  dW_t
	\,,\\
	dP&= -m g \boldsymbol \Gamma\cdot \boldsymbol \chi dt - k(R-1) |\boldsymbol \chi |^2 dt +\frac{1}{mR^3} \boldsymbol \Pi\cdot \mathbb I^{-1} \boldsymbol \Pi- \alpha dW_t
	\,.
	\end{split}
	\label{SP-eq}
\end{align}
The analysis above is valid, provided $\epsilon>0$ in the inertia tensor. In the limit $\epsilon\to 0 $, we may set $\boldsymbol \Omega_3=0$ and thereby recover the stochastic elastic spherical pendulum equations.

The equation set in \eqref{SP-eq} consists of two parts: the stochastic heavy top equations, coupled to a pair of stochastic canonical  Hamilton equations for the $(R,P)$ variables. 
The coupling between the two subsets of equations occurs through the dependence on $R$ together with $\boldsymbol \Omega$ and $\boldsymbol \Gamma$  in the Lagrangian \eqref{SP-lag}. 

The Fokker-Planck equation is now easily derived and it reads
\begin{align}
\begin{split}
	\frac{d}{dt}\mathbb P&= \{H,\mathbb P\}_{HT}+ \{H,\mathbb P\}_{\mathrm{can}} + \frac12 \{\Phi,\{\Phi,\mathbb P\}_{HT} \}_{HT}  \\
	&\hspace{3cm}+   \{\Phi,\{\Phi,\mathbb P\}_{HT} \}_{\mathrm can}  +   \frac12 \{\Phi,\{\Phi,\mathbb P\}_{\mathrm can } \}_{\mathrm can} \,,
\end{split}
\end{align}
where $\{\,\cdot \,,\, \cdot\,\}_{\mathrm{can}}$ is the canonical Poisson bracket with respect to the $(R,P)$ variables. 
The coupling between the elastic and pendulum motions is too complicated to extract any information from the Fokker-Planck equation. 
Indeed, inspection of the motion on $(R,P)$ shows that the advection equation for $(R,P)$ depends on the other variables. This inextricable complex dependence precludes finding the limiting distribution explicitly, despite the simple Laplacian form of the diffusion operator. 

As pointed out by \cite{lynch2002swinging}, the deterministic elastic spherical pendulum system is a toy model for the lowest modes of atmosphere dynamics. 
For this application, the motion of the spring oscillations encoded in $R$ is considerably faster than the pendulum motion and smaller in amplitude. Averaging the deterministic Lagrangian over the relatively rapid oscillations of the spring yields a nonlinear resonance between the modes of a type which also appears in the atmosphere. 
The noise can be included in either of the two types of dynamics and each will influence the other through the nonlinear coupling.
Also, for small oscillations around the equilibrium, the deterministic nonlinear coupling produces star shaped orbits \cite{holm2002spring,lynch2002swinging}, which can be perturbed, or even entirely destroyed, by the introduction of the noise, depending on its amplitude.

\section{Conclusion and open problems}

 Before stating some open problems arising from this work, we will briefly summarise it. 
In the first section we reviewed and developed the new machinery of stochastic geometric mechanics, in the context of finite dimensional systems which admit a group of symmetry of semi-simple type. 
The main results emerged from the introduction of a particular type of noise that preserves the coadjoint motion of the deterministic equations. 
The associated Fokker-Planck equation was found to possess interesting geometrical properties, related to the Lie-Poisson formulation of the equation of motion. The Lie-Poisson formulation was used to derive its invariant solution which is constant on the level set of the Casimirs prescribed by the initial conditions. 
The second section was devoted to the introduction of dissipation with the double bracket term, for which the coadjoint orbits are still preserved by the flow of the equation. 
This particular combination of multiplicative noise and nonlinear dissipation on coadjoint orbits yields non constant invariant measures, often referred to as Gibbs measures.
This type of invariant measure makes an interesting connection to statistical physics, and naturally provides us with a notion of temperature for these systems.  
The second outcome of the noise-dissipation interaction is the existence of so-called random attractors, which are mathematical objects deeply connected to the theory of random dynamical systems. 
We demonstrated, by adapting the standard tools from the random dynamical system theory, that such objects do exist in the mechanical systems studied here. 
Furthermore, we gave conditions on the dissipation and noise amplitudes for the existence of non-singular such attractors which will in turn support an SRB measure. 
The next two sections were devoted to the application of this theory to the standard examples in geometric mechanics, which are the free rigid body and the heavy top for the semi-direct product extension, also developed here.
We studied these explicit stochastic processes in detail, and in particular with illustrative numerical simulations.  
The final section touched upon other related examples such as a spring pendulum which can be viewed as an extension of the heavy top with a direct product structure, and a higher dimensional rigid body, written on $SO(4)$. 

We now end by listing several open problems which have been formulated during the course of this work. 
\begin{itemize}
	\item Some of the results presented here relied on the assumption of compactness of the coadjoint orbits; for example, in estimating the sum of the Lyapunov exponents, and for the study of the invariant distributions. 
		This assumption is probably unnecessary, but properly addressing its removal would require more advanced mathematical tools than we have used here. 
	\item We were only able to obtain a numerical demonstration that the top Lyapunov exponent is positive. This numerical demonstration could be made considerably more refined, and possibly analytical results could also be derived in future studies.
	\item We only touched upon the analysis of random attractors via numerical simulations, but much more may be said about these objects by, for example, studying their Lyapunov exponents in more detail, and studying the underlying dynamical process of their formation. 
		This is motivated by the fact that even in the two simple illustrative examples of random attractors treated here, we observed two rather different solution behaviours.
	\item Although we restricted ourselves to semisimple Lie algebras, we were able to write most of the equations for more general Lie algebras. 
		Hence, the present line of reasoning should be valid for other similar systems by modifying the proofs accordingly. 
		Examples of such systems would include semi-direct products with arbitrary advected quantities, solvable or nilpotent Lie algebras, the Toda lattice and possibly infinite dimensional Lie groups such as the diffeomorphism group.  
\end{itemize}

\subsection*{Acknowledgements}
{\small
We are grateful to many people for fruitful and encouraging discussions, including S. Albeverio, J.-M. Bismut, N. Bou-Rabee, M. D. Chekroun, G. Chirikjian, D. O. Crisan, A. B. Cruzeiro, J. Eldering, M. Engel, N. Grandchamps,  P. Lynch, J. Newman, J.-P. Ortega, G. Pavliotis, V. Putkaradze, T. Ratiu and C. Tronci.
The simulations were run with the Imperial College High Performance Computing Service.
We also acknowledge the Bernoulli Centre for Advanced Studies at EPFL where parts of this work were elaborated.  
AA acknowledges partial support from an Imperial College London Roth Award and AC from a CAPES Research Award BEX 11784-13-0. 
All the authors are also supported by the European Research Council Advanced Grant 267382 FCCA held by DH.
}

\bibliographystyle{alpha}
\bibliography{sgm-2015}

\appendix

\section{Numerical scheme}
\label{numerics-lyap}

Here we briefly discuss our numerical scheme for the integration of the stochastic rigid body equations. 
The heavy top being a direct extension of this scheme, we will not treat it here.

We use a split step scheme where the deterministic part of the equation is integrated with the python solver \emph{odeint} of Scipy that used the classic scheme \emph{lsoda}. 
The stochastic term is integrated via the exact solution of $d\boldsymbol \Pi = \boldsymbol \Pi \times \circ \mathbf{dW}$ where $\mathbf{dW}= \sum_i \boldsymbol \sigma_i dW_t^i$, as 
\begin{align}
	\boldsymbol \Pi(t+dt) = e^{\widehat{ \mathbf{dW}}} \boldsymbol \Pi(t),
\end{align}
where $\widehat{\mathbf{dW}}$ is corresponding anti-symmetric matrix, or $\mathfrak {so}(3)$ Lie algebra element, and the exponential is the matrix exponential.  

The computation of the top Lyapunov exponent is done with this scheme extended to also compute the evolution of the linearisation, $\delta \Pi$. 
The form of the equation for $\delta \Pi$ is similar to the rigid body equation, and the previous split step algorithm is implemented in the same way. 
Despite having a good preservation of the coadjoint orbit for the original stochastic process, this scheme does not accurately preserve the restriction of the linearisation to be tangent to the coadjoint orbit.  
The scheme \emph{lsoda} being implicit, we need an interpolation between $\Pi(t)$ and $\Pi(t+dt)$, which is achieved by a spherical linear interpolation, or `slerp'. 
Even with this interpolation, the condition $\delta \Pi\cdot \Pi=0$ is not precise enough and leads to an incorrect estimate of the norm $\|\delta \Pi\|$ in the computation of the Lyapunov exponent. 
We thus project out the part of $\delta \Pi$ aligned with $\Pi$ at each timestep such that $\delta \Pi$ remains in the tangent space to the coadjoint orbit. 
This projection gives coherent results, but it affects the convergence of the different estimations of the top Lyapunov exponent. Namely, for each different initial condition the corresponding top Lyapunov exponent will take more time to converge to its true value. 
We overcome this effect by averaging over several realisations of the top exponent computation, which is $50$ in our case.  


\end{document}